\newtheorem{theorem}{Theorem}[section]
\newtheorem{proposition}[theorem]{Proposition}
\newtheorem{corollary}[theorem]{Corollary}
\newtheorem{lemma}[theorem]{Lemma}
\newtheorem*{up}{Universal Property of $\mathcal{L}(X)$}
\theoremstyle{definition}
\newtheorem{remark}[theorem]{Remark}
\newtheorem{definition}[theorem]{Definition}
\newtheorem{example}[theorem]{Example}
\long\def\symbolfootnote[#1]#2{\begingroup%
\def\thefootnote{\fnsymbol{footnote}}\footnote[#1]{#2}\endgroup}
\newcommand{\field}[1]{\ensuremath{\mathbb{#1}}}
\newcommand{\inv}{^{-1}}
\newcommand{\D}{\mathrm{d}}
\newcommand{\id}{\mathrm{id}}
\newcommand{\Aff}{\mathrm{Aff}}
\newcommand{\ttil}{\widetilde}
\newcommand{\N}{\field{N}}
\newcommand{\R}{\field{R}}
\newcommand{\C}{\field{C}}
\newcommand{\la}{\mathcal{L}}
\newcommand{\Q}{\field{Q}}
\newcommand{\Z}{\field{Z}}
\newcommand{\lx}{\mathcal{L}(x)}
\newcommand{\lX}{\mathcal{L}(X)}
\newcommand{\leX}{\mathcal{L}^{\varepsilon}(X)}
\newcommand{\leXp}{\mathcal{L}^{\varepsilon'}(X)}
\renewcommand{\phi}{\varphi}
\newcommand{\eps}{\varepsilon}
\newcommand{\bdy}{\partial}                       
\newcommand{\Lie}[1]{\mathrm{#1}}
\newcommand{\bb}{\tilde{B}(x,r)^t}
\begin{document}

\begin{center}
{\large\bf Wild singularities of flat surfaces} 
\smallskip

{\bf
Joshua P.~Bowman$^a$ 
\& 
Ferr\'an Valdez$^b$ 
} 
\medskip

$^a$ IMS, Stony Brook University, \\
Stony Brook, NY, USA \\
\emph{e-mail:} \texttt{joshua.bowman@gmail.com}
\smallskip

$^b$ Centro de Ciencias Matem\'aticas, U.N.A.M.\\                   
Campus Morelia, Michoac\'an, M\'exico\\
\emph{e-mail:} \texttt{ferran@matmor.unam.mx}
\end{center}

\title{}

\begin{abstract}
\noindent
We consider flat surfaces and the points of their metric completions, 
particularly the singularities to which the flat structure of the 
surface does not extend. The local behavior near a singular point $x$ 
can be partially described by a topological space $\mathcal{L}(x)$ 
which captures all the ways that $x$ can be ``approached linearly''. 
The homeomorphism type of $\mathcal{L}(x)$ is an affine invariant. 
When $x$ is not a cone point or an infinite-angle singularity, we say 
it is {\em wild}; in this case it is necessary to add further metric 
data to $\mathcal{L}(x)$ to get a quantitative description of the 
surface near $x$.
\end{abstract}
\maketitle




The study of flat surfaces, appearing under different guises 
(quadratic differentials, abelian differentials, translation surfaces, 
measured foliations, F-structures, and so on), reaches back at least 
to the 1970--80s, when seminal work of Thurston, Masur, Veech, and 
others uncovered fundamental connections among surface automorphisms, 
flat surface geometry, and billiard dynamics. However, their origins 
go back much further to the 1930--40s, with Nielsen's classification 
of torus automorphisms and Fox--Kerschner's association of a Riemann 
surface to billiards in a polygon \cite{FoxKersh}, sometimes called the 
Katok--Zemlyakov unfolding construction. Throughout much of the 
history of flat surfaces, the focus has been on {\em compact} flat 
surfaces, having so-called ``cone-type'' singularities, with 
non-compact surfaces appearing only sporadically. In this way, 
researchers could bring to bear the considerable power of 
finite-dimensionality in Teichm\"uller theory and in algebraic 
constructions such as homology groups.

In recent years, increasing attention has been paid to the study of 
non-compact flat surfaces, or more precisely surfaces of infinite type. 
Several treatments deal with classes of examples such as covers of 
compact surfaces \cite{HWS} or surfaces arising from certain dynamical systems 
(wind-tree models \cite{HLT}, irrational billiards \cite{V}, exchanges of infinitely many 
intervals \cite{Hoo}, etc.). In a similar vein, de~Carvalho--Hall have initiated 
a study of dynamical systems on genus-zero surfaces with infinitely 
many singularities \cite{dCH}. These studies have necessitated the adaptation of 
tools from the theory of compact flat surfaces, but have so far 
remained fuzzy on the local, intrinsic behavior of a surface near its 
singular points. The simple description via cone points becomes 
inadequate when the metric structure imposed on a surface can allow 
for essentially arbitrary topological complication within a bounded 
region. In our opinion, this constitutes an important lack and an 
obstacle to properly understanding basic notions such as straight-line 
flow and deformations of flat surfaces.

Here we present a method for studying the local behavior of 
singularities of topologically infinite flat surfaces. For the most 
part, we restrict our attention to {\em isolated} singularities in 
the metric completion of a flat surface. These singularities do not, 
in general, have an analytic description parallel to the description 
of cone points as zeroes of holomorphic differentials. Nor are they 
determined (up to local isometry) by discrete sets of data. 

Our primary invariant is a topological space $\mathcal{L}(x)$ 
associated to each point $x$ in the metric completion of a flat 
surface, which may be thought of as a set of directions arising from $x$; each 
element of $\mathcal{L}(x)$ represents a ``linear approach'' to $x$. 
This space is invariant under affine deformations of the surface. 
We then add metric data to $\mathcal{L}(x)$ that quantify how the 
linear approaches are distributed. Together, these provide a 
complete description of the surface near $x$.

From our perspective, this paper provides a unifying vision of 
the disparate singular behaviors that had previously been only 
superficially observed. In \S\ref{S:definitions} we recall some 
motivating examples and define the invariant $\mathcal{L}(x)$ along 
with its global version $\mathcal{L}(X)$, where $X$ is a flat surface 
and $x$ is in the metric completion of $X$. In \S\ref{S:topology} we 
examine the topological structure of $\mathcal{L}(X)$ and 
$\mathcal{L}(x)$ in more detail. We prove in particular that the space
$\mathcal{L}(X)$ is an extension of the unit tangent bundle of the flat
surface $X$ to its metric completion.  In \S\ref{S:affine} we study the 
effect of affine maps on $\mathcal{L}(X)$ and $\mathcal{L}(x)$. 
In \S\ref{S:isometries} we provide a set of necessary and sufficient 
conditions for two points to have isometric neighborhoods. Finally, 
in \S\ref{S:remarks} we briefly compare our invariants with similar 
constructions that have been previously described. Our constructions 
are quite general and would apply in many contexts outside of flat 
surfaces, while they also retain extra available information due to 
properties of flat surfaces that set them apart from general metric 
spaces.

\subsection*{Acknowledgements}

We thank CIRM in Luminy and the Hausdorff Institute in Bonn for 
hosting two conferences where many of these ideas were formulated; 
we are grateful to the organizers of these conferences, as well. 
We thank our home universities for facilitating remote discussion. 
The second author thanks financial support from CONACYT, IACOD and
PAPIIT. We thank our colleagues for helpful and encouraging conversations, 
especially Barak Weiss, Pat Hooper, Gabriela Schmith\"usen, and 
John Smillie.

\section{Basic definitions and examples.}\label{S:definitions}

In this section we introduce the basic definitions and examples. The main objects we will be working with are translation surfaces arising from holomorphic 1-forms on a fixed Riemann surface.

\begin{definition}\label{flat surface}
A \emph{flat surface} is a pair $(X,\omega)$ formed by a Riemann 
surface $X$ and a non identically zero holomorphic $1$-form $\omega$ 
on $X$. Where clarity permits, we abbreviate $(X,\omega)$ by $X$.
We denote by $Z(\omega)\subset X$ the set of zeroes of $\omega$.
\end{definition}

Sometimes we will also use the terminology \emph{translation surface} 
to refer to a flat surface. Local integration of the form $\omega$
endows $X':=X\setminus  Z(\omega)$ with an atlas whose transition 
functions are translations of $\C$. The pullback of the standard 
translation invariant flat metric on the complex plane defines a flat 
metric $d_X$ on $X\setminus Z(\omega)$. We will denote by 
$\widehat{X}$ the metric completion of $X'$ with respect to $d_X$. 
In this article we will work with flat surfaces satisfying the 
following:
\medskip

\noindent
\underline{Main hypothesis.} 
\emph{The set $\mathrm{Sing}(X):=\widehat{X}\setminus X'$ 
is a discrete subset of $\widehat{X}$}.
\medskip

Remark that $\widehat{X}$, and hence $\mathrm{Sing}(X)$, 
depends on our choice of the $1$-form $\omega$ on $X$. 
Points in $\mathrm{Sing}(X)$ fall in one of the following cases.
\begin{enumerate}
\item Flat points. These are points $p\in\widehat{X} \setminus X$ 
for which the flat metric of $X$ extends to a flat metric on 
$X\cup\{p\}$.
\item Finite angle singularities. These are points $p\in\widehat{X}$ 
for which the Riemann surface structure of $X$ extends to 
$X\cup\{p\}$. In a neighborhood of $p$ the form $\omega$ is given 
by $z^k\,\D{z}$ for some $k\in\N$.
\item Infinite angle singularities. For each of these singularities 
$p\in\widehat{X}$, there exists a punctured neighborhood 
$0<d_X(w,p)<\eps$ which is isometric to an infinite cyclic covering 
of the punctured disc $(0<|z|<\eps, \D{z})$. Such punctured 
neighborhoods can be pictured as an infinite double helicoid whose 
axis has been collapsed to a point. Infinite angle singularities 
naturally appear in flat surfaces asociated to irrational polygonal 
billiards. 
\item The rest. We call such points $p$ \emph{wild} singularities 
of the flat surface. These points and their neighborhoods 
$0<d_X(w,p)<\eps$ will constitute the main research point of 
this article. 
\end{enumerate}

\noindent
\emph{Convention.} Henceforth we will work only with flat surfaces 
$(X,\omega)$ such that the set of flat points in $\mathrm{Sing}(X)$ 
is empty.

\begin{definition}[Saddle connection]
A {\em critical trajectory} of a flat surface $(X,\omega)$ is an 
open geodesic in the flat metric $d_X$ whose image under the natural 
embedding $X\hookrightarrow\widehat{X}$ issues from a point in 
$\mathrm{Sing}(X)$, contains no other point of $\mathrm{Sing}(X)$ 
in its interior and is not property contained in some other geodesic 
segment. A \emph{saddle connection} is a finite length critical 
trajectory.
\end{definition}

\begin{definition}[Veech group]
Let $\Aff_+(X,\omega)$ be the group of affine orientation preserving 
homeomorphisms of $(X,\omega)$. Consider the map that associates to 
each $\phi\in\Aff_+(X,\omega)$ its Jacobian derivative 
$D\phi\in\Lie{GL}_+(2,\R)$. We call the image of this map the 
\emph{Veech group} of $(X,\omega)$ and denote it by $\Gamma(X)$.
\end{definition}

In the following paragraphs we introduce the topological spaces $\lX$ and $\lx$. They will play the role of unit tangent bundle and unit tangent space on $\widehat{X}$ respectively.
 
\begin{definition}[Linear approach] 
\label{linap}
Given $\eps>0$, let $\mathcal{L}^\eps(X)$ be the space 
\[
\mathcal{L}^{\eps}(X)
:=\{\text{unit speed geodesic trajectories}\ \gamma:(0,\eps)\to X'\}.
\]
Each $\leX$ carries the uniform topology, defined by the uniform 
metric:
\[
d_{\eps}(\gamma_1,\gamma_2)
= \sup_{0<t<\eps}d_X\big(\gamma_1(t),\gamma_2(t)\big)
\]
Two elements $\gamma_1\in\leX$ and $\gamma_2\in\leXp$ are said to be 
equivalent if and only if $\gamma_1(t)=\gamma_2(t)$ for all 
$t\in(0,\min\{\eps,\eps'\})$. We denote by $\sim$ this equivalence 
relation and define:
\begin{equation}
\lX:=\bigsqcup_{\eps>0}\leX{\huge /}\sim
\end{equation}
The equivalence class of $\gamma$ will be denoted by $[\gamma]$. 
We call each element $[\gamma]$ of $\mathcal{L}(X)$ a 
\emph{linear approach} to the point 
$\lim_{t\to 0}\gamma(t)\in\widehat{X}$.
\end{definition}

\subsection*{Topology for $\lX$}
For each $\eps'\leq\eps$ the restriction of each linear approach in 
$\leX$ to the interval $(0,\eps')$ defines a continuous injection:
\[
\rho_{\eps}^{\eps'}:\leX\to\leXp
\]  
Define $\eps\trianglelefteq\eps'$ if and only if $\eps'\leq\eps$, 
where $\leq$ is the standard order in $\R$. Then 
$\langle\leX,\rho_{\eps}^{\eps'}\rangle$ is a direct system of 
topological spaces over $(\R^+,\trianglelefteq)$. Since for every 
$\eps\trianglelefteq\eps'\trianglelefteq 0$ the projection map 
$\gamma\mapsto [\gamma]$ from $\leX$ to $\lX$ is injective and 
commutes with $\rho_{\eps}^{\eps'}$, we have the equality of sets
\begin{equation}\label{lX}
\lX=\varinjlim \leX
\end{equation}
Henceforth we endow $\lX$ with the {\em direct limit topology} 
(sometimes called the {\em final topology}), which is the finest 
topology such that the inclusions $\mathcal{L}^\eps(X) \to \lX$ are 
all continuous. We denote by $\rho_{\eps}:\leX\to\lX$ the natural 
projection $\gamma \mapsto [\gamma]$. Unless otherwise stated, we also 
identify $\mathcal{L}^\eps(X)$ with its image in $\mathcal{L}(X)$.

\begin{definition}
Let $x\in\widehat{X}$. We define $\lx$ to be the set of all linear 
approaches $[\gamma]\in\lX$ such that $\lim_{t\to 0}\gamma(t)=x$, 
endowed with the subspace topology.
\end{definition}

The space $\lx$ naturally decomposes into rotational components, 
which we define in the following paragraphs.

\begin{definition}[Angular sector]\label{angsec}
We call an \emph{angular sector} a triple of the form $(I,c,i_c)$, 
where $I\subseteq\R$ is a non-empty \emph{interval}, $c\in\R$ is a 
constant and $i_c$ is a isometry into $X'$ of the open set
\begin{equation}\label{U}
U=U(I,c):=\{(x,y)\mid x<c,\, y\in I\},
\end{equation}
endowed with the translation structure defined by the holomorphic 
$1$-form $e^z\,\D{z}$ (where $z=x+iy$).  
\end{definition} 

Observe that for every fixed angular sector $(I,c,i_c)$ the limit 
$\lim_{x\to-\infty}i_c(x,y)$ exists in $\widehat{X}$ and is 
independent from the $y$-coordinate in $U(I,c)$ into $X$.
\medskip

\noindent
\emph{Convention}: All sets $U$ are contained in the same copy of 
$\R^2$ on which we have previously fixed our favorite orientation. 
The interval $I$ in the preceding definition can be just a point, 
unbounded and are not necessarily closed or open.

\begin{definition}[Rotational component]
Let $[\gamma_1]$ and $[\gamma_2]$ be two linear approaches in 
$\la(x)$. We say that $[\gamma_1]$ and $[\gamma_2]$ are equivalent 
if and only if there exist representatives $\gamma_i:(0,\eps_i)\to X$, 
$i=1,2$ and an angular sector $(I,c,i_c)$ such that 
$(i_c^{-1}\circ \gamma_i)(0,\eps_i)$ is equal to an infinite segment 
of real line $(x<c,y_i)$, for some fixed $y_i\in I$, $i=1,2$.
We denote by $\overline{[\gamma]}$ the equivalence class defined by 
$[\gamma]\in \lx$, and we call this class \emph{the rotational 
component of $\mathcal{L}(x)$ containing $[\gamma]$}. 
 \end{definition}

\begin{lemma}
Every rotational component $\overline{[\gamma]}$ containing more than 
one element admits a connected real 1-manifold translation structure, 
possibly with non-empty boundary.
\end{lemma}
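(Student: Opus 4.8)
The plan is to produce an explicit atlas on $\overline{[\gamma]}$ whose charts come from angular sectors and whose transition maps are translations of $\R$ (equivalently, of the $y$-coordinate in the model $U(I,c)$). First I would fix one linear approach $[\gamma_0]\in\overline{[\gamma]}$ and, using the defining condition of a rotational component, realize it inside an angular sector $(I_0,c_0,i_{c_0})$; the map sending a linear approach $[\gamma]$ determined inside this sector by the horizontal ray $(x<c_0,y)$ to the real number $y\in I_0$ is the prototype of a chart. The underlying set-level claim to check first is that every element of $\overline{[\gamma]}$ which can be realized in $(I_0,c_0,i_{c_0})$ is realized along a \emph{unique} horizontal ray, so that this map is well defined and injective; this uses that two distinct horizontal rays in $U(I_0,c_0)$ stay a bounded distance apart under $e^z\,\D z$ (that distance being $\int$ of $e^x$ times the angular gap, which is positive), hence give inequivalent linear approaches, while the same ray reparametrized gives the same $[\gamma]$ by Definition~\ref{linap}.

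Next I would check that these charts cover $\overline{[\gamma]}$ and that transitions are translations. Covering is immediate from the definition of the equivalence relation: every $[\gamma']\in\overline{[\gamma]}$ shares \emph{some} angular sector with \emph{some} already-placed approach, and by chaining finitely many such sectors (or rather, by the connectedness argument below, arbitrarily many) we reach $[\gamma']$. For the transition maps, suppose a linear approach is realized both in $(I_1,c_1,i_{c_1})$ along height $y_1$ and in $(I_2,c_2,i_{c_2})$ along height $y_2$. On the overlap, the two isometric embeddings of half-strips into $X'$ agree along a common sub-ray, and since both source strips carry the $e^z\,\D z$ structure, the comparison map between them is an isometry of (pieces of) the $e^z\,\D z$-plane fixing a horizontal ray pointwise; such an isometry is a translation in $y$ (a rotation about the puncture $x\to-\infty$ in the helicoidal picture), so on the level of heights it is $y\mapsto y + (y_2-y_1)$, a translation of $\R$. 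This gives the translation-structure atlas; connectedness of the resulting $1$-manifold follows because any two elements of $\overline{[\gamma]}$ are linked by a finite chain of overlapping angular sectors, and within each sector the set of realized heights is an interval, hence connected, so the union is connected. The boundary may be nonempty precisely because the interval $I$ of an angular sector need not be open.

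The last point to verify is that this manifold topology agrees with the subspace topology $\overline{[\gamma]}$ inherits from $\la(x)\subseteq\lX$. Here I would compare the uniform metric $d_\eps$ with the Euclidean metric on heights inside a single angular sector: if two horizontal rays in $U(I,c)$ at heights $y,y'$ are both shrunk to the parameter interval $(0,\eps)$ with unit speed, then $d_\eps$ between the corresponding linear approaches is comparable to $|y-y'|$ up to a bounded multiplicative constant depending on $c$ and $\eps$ (again via the $e^x$ weight), so the chart maps are bi-Lipschitz onto their images and in particular homeomorphisms for the subspace topology.

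The main obstacle I anticipate is exactly this last comparison together with the well-definedness in the first step: one must be careful that a single linear approach is not realized along \emph{two different} horizontal rays in the \emph{same} angular sector (ruling this out is what makes the charts injective), and that the $d_\eps$-topology does not identify or separate heights differently than the Euclidean topology near the ends of $I$. Both reduce to the quantitative estimate that horizontal rays at heights $y\ne y'$ in $U(I,c)$ are at distance bounded below and above by positive multiples of $|y-y'|$ once we are at $x$-coordinate $\le c - \text{const}$; granting the main hypothesis (discreteness of $\mathrm{Sing}(X)$) and the fact that $i_c$ is an isometric embedding, this is a direct computation in the $e^z\,\D z$ metric, and the rest of the argument is bookkeeping with direct limits.
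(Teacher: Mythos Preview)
Your construction of the atlas in steps 1--4 is essentially the paper's proof: charts come from angular sectors, the chart map sends the linear approach realized along the ray at height $y$ to $y\in I$, and transitions are translations because an isometry of the $e^z\,\D z$ half-strip fixing a horizontal ray pointwise is a vertical translation. That part is fine.

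The genuine error is step 5. You set out to verify that the resulting manifold topology coincides with the subspace topology that $\overline{[\gamma]}$ inherits from $\mathcal{L}(x)\subset\mathcal{L}(X)$. This is not required by the lemma, which only asserts that the set $\overline{[\gamma]}$ \emph{admits} a 1-manifold translation structure, and it is in fact false in general. The paper records this explicitly in Remark~\ref{R:topnearobstr}: for the surface of Example~\ref{Ex:geoconst}, a basic open set $\widetilde{B}(x,r)^{1/2}$ meets the rotational component $\overline{[\gamma_1]}\cong(0,\infty)$ in infinitely many disjoint intervals, so the subspace topology is not locally connected, whereas the manifold topology obviously is. Your bi-Lipschitz estimate---that $d_\eps$ between two rays at heights $y,y'$ in a single sector is comparable to $|y-y'|$---only compares linear approaches \emph{within one chart domain}; it says nothing about linear approaches from distant parts of the spiral that happen to land $d_\eps$-close to $[\gamma_y]$, and therefore does not show that the chart domains $V(I,c,i_c)$ are open in the subspace topology. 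The paper sidesteps the issue entirely by \emph{declaring} the collection $\mathcal{V}$ of such $V$'s to be a basis for a new topology on $\overline{[\gamma]}$; the rotational component is then only immersed in $\mathcal{L}(X)$, not embedded. Drop step 5 and your plan matches the paper's argument.
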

\begin{proof}
For every angular sector $(I,c,i_c)$ making two linear approaches 
in $\overline{[\gamma]}$ equivalent, we define 
\[
V = V(I,c,i_c) = \{i_c(x,y) \mid x<c \hspace{1mm}\}_{y\in I} 
\quad\subseteq\quad\overline{[\gamma]}.
\]
Call $\mathcal{V}$ the collection of all the $V(I,c,i_c)$ 
obtained by considering angular sectors $(I,c,i_c)$ as before. This 
collection is the basis for a topology on $\overline{[\gamma]}$. 
With respect to this topology the class $\overline{[\gamma]}$ is 
Hausdorff, second countable and connected. Define $\phi_V:V\to I$ 
by $\phi_V [i_c(x<c,y)]= y$. This is a local homeomorphism. Given 
the convention made after definition \ref{angsec}, the set 
$\{(V,\phi_V)\}_{V\in\mathcal{V}}$ defines an atlas on 
$\overline{[\gamma]}$ whose transition functions are translations 
in $\R$. Remark that charts for boundary points are defined by left 
or right closed intervals.
\end{proof}

\begin{remark} \label{R:rotcom} From now on $\overline{[\gamma]}$ 
will denote the rotational component defined by the linear approach 
$[\gamma]$ and endowed with the translation structure given by the 
preceding proposition. We can lift the standard translation invariant 
metric of $\R$ to each $\overline{[\gamma]}$. We have the following 
situations:
\begin{enumerate}
\item If $\lx$ contains a compact rotational component 
$\overline{[\gamma]}$, there are two possibilities:
\begin{enumerate}
\item[(1.a)] The rotational component is an interval $[a,b]$, perhaps 
with $a=b$. In this case the rotational component is a proper subset 
of $\lx$.
\item [(1.b)]The rotational component is homeomorphic to $S^1$. 
In this case $\lx$ and $\overline{[\gamma]}$ are homeomorphic as 
topological spaces.
\end{enumerate}

\item If $\lx$ contains a non compact rotational component 
$\overline{[\gamma]}$. The following situations can occur:
\begin{enumerate}
\item[(2.a)] The total length of $\overline{[\gamma]}$ is finite. 
In this case the rotational component is isometric to a bounded 
interval and is a proper subset of $\lx$.
\item[(2.b)] The total length of $\overline{[\gamma]}$ is infinite, 
but the class is isometric to an unbounded proper interval of $\R$. 
In this case we say that the class $\overline{[\gamma]}$ is a 
\emph{spire}. A spire may coincide with or be a proper subset of 
$\lx$.
\item[(2.c)]  The total length of $\overline{[\gamma]}$ is infinite, 
and the class is isometric to $\R$. In this case we say that the 
class $\overline{[\gamma]}$ is a \emph{double spire}. This case 
contains all infinite angle singularities, for which necessarily 
$\lx=\overline{[\gamma]}$. Nevertheless, there are examples for which 
$\lx$ is a double spire \emph{but} $x$ \emph{is not} an infinite 
angle singularity.
\end{enumerate}

\end{enumerate}
\end{remark}

\begin{remark}
Let $x\in\widehat{X}$ be a flat point, a finite angle singularity of 
angle $2\pi k$, $k>1$, or an infinite angle singularity. Then $\lx$ 
is isometric to $\R/2\pi\Z$, $\R/2\pi k\Z$ or $\R$ respectively.
\end{remark}

\subsection{Examples}\label{SS:examples}

In the rest of this section we present some examples of translation surfaces having wild singularities.

\begin{example}[\cite{Ch,CGL}]\label{Ex:chamanara} 
This example appears naturally when studying the self mappings of the unit square known as the \emph{horseshoe} and \emph{baker's} map. Start with a unit square $S$, let $\alpha=1/2$ and partition its top edge into segments of lengths $\alpha^k$, $1 \le k < \infty$, in decreasing order from left to right. Do the same for the bottom edge, but in reverse order. Remove extremities of all segments involved and identify (open) segments of the same length via translation. Now partition the left edge from top to bottom in the same way, and the right edge from bottom to top, remove extremities of all segments involved and identify those of the same length via translation. (See Figure~\ref{F:chamanara-ex}.) The result is an open flat Riemann surface $X_{\alpha}$ of infinite genus with one end (\emph{a.k.a.}\ a Loch Ness monster after Ghys \cite{Ghys}). The metric completion 
$\overline{X_{\alpha}}$ is obtained by adding the extremities of each $A_i$ and $B_i$, $i\in\N$. In $\overline{X_{\alpha}}$ all this added points are at distance zero from each other, hence 
$\overline{X_{\alpha}}\setminus X_{\alpha}={x}$. The horizontal and vertical flow on $X_{\alpha}$ define two families of saddle connections whose length is not bounded away from zero. Therefore $x$ is a wild singularity. Chamanara observes that ``[g]eometrically, the surface spirals infinitely many times around this point.'' In fact, 
$\mathcal{L}(x)$ decomposes into an infinite number of rotational components.
\begin{figure}
\centering
\includegraphics[scale=0.8]{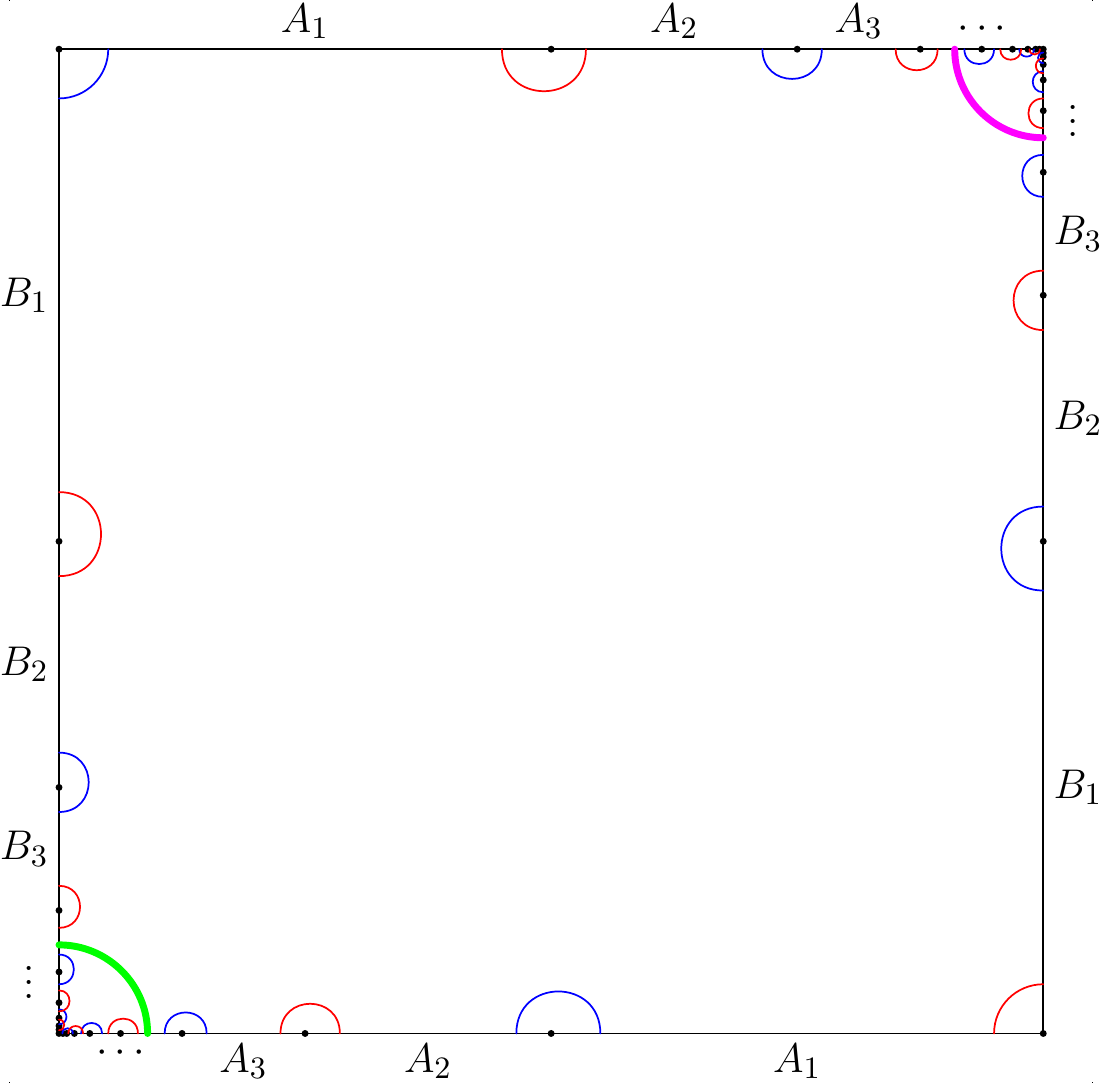}
\begin{center}
\caption{Double spires and finite length rotational components in  $X_{1/2}$.}\label{F:chamanara-ex}
\end{center}
\end{figure}
Indeed, if the intersection of the diagonals in the unit square is 
the origin and points in $A_1\cap B_1$ are $(-\frac{1}{2},\frac{1}{2})$ and $(\frac{1}{2},-\frac{1}{2})$ , 
then $\gamma_1(t):=(1-t)( -\frac{1}{2},\frac{1}{2})$ and $\gamma_2(t):=-\gamma_1(t)$, define two 
rotational components which are double spires. On the other hand, 
$\eta_1(t)=(1-t)(\frac{1}{2},\frac{1}{2})$ and $\eta_2(t)=-\eta_1(t)$, define two rotational 
components whose length is $\pi/4$. Remark that the obstructions 
for these rotational components to become spires are precisely the 
horizontal and vertical saddle connections defined by $A_i$ and $B_i$, 
$i\in\N$. Using Chamanara's results \cite[Theorem B]{Ch} about the Veech
group of $X_{\alpha}$ one can prove that $\mathcal{L}(x)$ decomposes into countably 
many rotational components. The preceding argumentation remains valid 
if we change $\alpha=1/2$ for $\alpha=1/n$ with $n\in\N$. 
\end{example}

\begin{example}[The geometric series construction]\label{Ex:geoconst}
We introduce a local construction that will provide an archetype for half spires. Let $0<\alpha<1$, $I_0=[0,\alpha]$ and, for each $n\geq 1$ define $I_n=[\sum_{m=1}^n\alpha^m,\sum_{m=1}^{n+1}\alpha^m]$. Let $J_0=[1-\alpha,1]$ and $J_n=[1-\sum_{m=1}^{n+1}\alpha^m,1-\sum_{m=1}^n\alpha^m]$, $n\geq 1$. Now slit the xy-plane along $[0,1]$. Points in the boundary of the slitted plane are thought as two different copies of $[0,1]$, each of which we identify  with the families of segments $\cup_{n\geq 0} I_n$ and $\cup_{n\geq 0} J_n$ respectively. Using translations, we identify for each $n\geq 0$ the segment $I_n$ with $J_n$ and remove the extremities of the segments involved at each step. The result is a flat surface $Y_{\alpha}$ such that $\mathrm{Sing}(\widehat{Y_{\alpha}})=x$. The space $\lx$ is formed by two rotational components whose representatives are the linear approaches $\gamma_1(t)=(0,t)$ and $\gamma_2(t)=(1,t)$, $t\in(0,1)$. Each the rotational component is isometric to $(0,\infty)$.
\end{example}

\begin{example}[Double parabola]\label{Ex:doubleparabola}
In this example we construct rotational components consisting of only one point. Let $\pm I_n$ be a family of segments in the $xy$-plane whose endpoints are $(\pm 2^n,2^{2n})$ and $(\pm 2^{n+1},2^{2(n+1)})$, 
$n\in\Z$. Let $\pm J_n$ be the family of segments whose endpoints are 
$(\pm 2^n,-2^{2n})$ and $(\pm 2^{n+1},-2^{2(n+1)})$, $n\in\Z$. Let $P_-$ be closure of the connected component of 
$\R^2\setminus\{\pm I_n\}_{n\in\Z}\cup(0,0)$ containing the negative $x$-axis. Analogously, let $P_+$ be the closure of the connected component of 
$\R^2\setminus\{\pm J_n\}_{n\in\Z}\cup(0,0)$ containing the positive $x$-axis. By construction $\partial P_-=\{-I_n\}_{n\in\Z}\cup(0,0)\cup \{-J_n\}_{n\in\Z}$ and 
$\partial P_+=\{I_n\}_{n\in\Z}\cup(0,0)\cup \{J_n\}_{n\in\Z}$. Remove all vertices 
(and the origin) from $P_-$ and $P_+$ and identify this two disjoint domains along parallel sides of the same length using translations. The result of this construction is a flat surface $X$ for which $\mathrm{Sing}(\widehat{X})$ is only one wild singularity $x$. The rotational components defined by 
$\pm\gamma(t)=(\pm t,0)$ consist of only one point. It is easy to check that in this case $\mathcal{L}(x)$ contains also two double spires.
\end{example}

Other examples of translation surfaces with wild singularities 
include Hooper's generalization of Thurston's construction to 
infinite bipartite graphs \cite{Hoo} or the geometric limit of the 
Arnoux--Yoccoz family studied by the first author \cite{B}.

\section{Topology of $\lX$ and $\lx$}\label{S:topology}

\subsection{Universal property of the direct limit}

In the previous section, we defined $\lX$ as the direct limit of 
the topological spaces $\mathcal{L}^\eps(X)$ and thereby endowed 
$\mathcal{L}(x)$ with the direct limit topology. In this section 
and the next we will explore some of the consequences of this 
topology. First, we state the corresponding universal property of 
$\lX$. Recall that for $\eps' < \eps$, $\rho_{\eps}^{\eps'}$ is the 
restriction map $\mathcal{L}^\eps(X) \to \mathcal{L}^{\eps'}(X)$, 
and for all $\eps > 0$, $\rho_{\eps}$ is the map 
$\gamma \mapsto [\gamma]$.

\begin{up}
Let $X$ be a translation surface. Given any topological space 
$\mathfrak{T}$ and any collection of continuous maps 
$\{f_\eps : \mathcal{L}^\eps(X) \to \mathfrak{T}\}_{\eps>0}$ 
such that $f_{\eps'} \circ \rho_{\eps}^{\eps'} = f_\eps$ 
whenever $\eps' < \eps$, there is a unique continuous map 
$f : \lX \to \mathfrak{T}$ such that $f \circ \rho_{\eps} 
= f_\eps$ for all $\eps$.
\end{up}

\subsection{Continuity of the basepoint and direction maps}

Two functions we would like to define on $\lX$ are the 
{\em basepoint} and {\em direction} maps, respectively 
denoted $\mathrm{bp}$ and $\mathrm{dir}$, and given on 
$\mathcal{L}^\eps(X)$ by 
\begin{align*}
\mathrm{bp} :&\ 
\gamma \mapsto \lim_{t\to0} \gamma(t) \in \widehat{X} \\
\mathrm{dir} :&\ 
\gamma \mapsto \dot\gamma(t) \in S^1 
&& \text{for any $t \in (0,\eps)$}.
\end{align*}
The basepoint map satisfies 
$\mathrm{bp} \circ \rho_{\eps}^{\eps'} = \mathrm{bp}$ because 
it does not depend on the length of the domain of $\gamma$, 
only on its values near zero. The direction map is well-defined 
because the translation structure of $X$ yields a trivialization 
of the unit tangent bundle $T_1(X) = X \times S^1$. Since each
geodesic is contained in a fiber of the projection $X\times S^1\to S^1$
the direction map satisfies $\mathrm{dir} \circ \rho_{\eps}^{\eps'} = 
\mathrm{dir}$.
\begin{proposition}\label{P:bp&dir}
For any $\eps > 0$, the functions 
$\mathrm{bp} : \mathcal{L}^\eps(X) \to \widehat{X}$ and 
$\mathrm{dir} : \mathcal{L}^\eps(X) \to S^1$ are continuous.
\end{proposition}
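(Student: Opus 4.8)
The plan is to prove continuity of $\mathrm{bp}$ and $\mathrm{dir}$ separately, with $\mathrm{dir}$ being essentially immediate and $\mathrm{bp}$ requiring a genuine estimate. For $\mathrm{dir}$: since the translation structure on $X'$ trivializes the unit tangent bundle as $X' \times S^1$, a unit-speed geodesic $\gamma \in \mathcal{L}^\eps(X)$ has constant derivative $\dot\gamma \equiv v_\gamma \in S^1$. If $d_\eps(\gamma_1,\gamma_2) = \sup_{0<t<\eps} d_X(\gamma_1(t),\gamma_2(t)) < \delta$ is small, then in particular $\gamma_1$ and $\gamma_2$ pass within $\delta$ of each other at, say, $t = \eps/2$; choosing $\delta$ smaller than the injectivity radius of $X'$ at $\gamma_1(\eps/2)$, one can compare the two geodesic segments in a single flat chart and conclude that $v_{\gamma_1}$ and $v_{\gamma_2}$ are close in $S^1$. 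So $\mathrm{dir}$ is continuous (in fact locally Lipschitz on the locus where nearby geodesics stay in a common chart).

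For $\mathrm{bp}$, fix $\gamma \in \mathcal{L}^\eps(X)$ with basepoint $x = \mathrm{bp}(\gamma) = \lim_{t\to 0}\gamma(t) \in \widehat{X}$, and let $\eta > 0$ be given; we must find $\delta > 0$ so that $d_\eps(\gamma_1,\gamma) < \delta$ forces $d_X(\mathrm{bp}(\gamma_1), x) < \eta$. First choose $t_0 \in (0,\eps)$ small enough that $d_X(\gamma(t_0), x) < \eta/3$, which is possible since $\gamma(t) \to x$. Now suppose $d_\eps(\gamma_1,\gamma) < \delta$. Then $d_X(\gamma_1(t_0),\gamma(t_0)) < \delta$, and for every $t \in (0, t_0)$ we have $d_X(\gamma_1(t), \gamma(t)) < \delta$ as well. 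Let $x_1 = \mathrm{bp}(\gamma_1) = \lim_{t\to 0}\gamma_1(t)$. Then, using that $d_X$ extends continuously to $\widehat{X}$ and that $\gamma_1(t) \to x_1$, $\gamma(t) \to x$, we get $d_X(x_1, x) = \lim_{t\to 0} d_X(\gamma_1(t),\gamma(t)) \le \delta$. Choosing $\delta < \eta$ already gives $d_X(x_1,x) < \eta$, so $\mathrm{bp}$ is continuous at $\gamma$. (In fact this argument shows $\mathrm{bp}$ is $1$-Lipschitz on each $\mathcal{L}^\eps(X)$: $d_{\widehat X}(\mathrm{bp}(\gamma_1),\mathrm{bp}(\gamma_2)) \le d_\eps(\gamma_1,\gamma_2)$, since $d_X(\gamma_1(t),\gamma_2(t)) \le d_\eps(\gamma_1,\gamma_2)$ for all $t$ and we may pass to the limit $t \to 0$.)

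I expect the only subtle point to be the passage to the limit: one must know that the distance function $d_X$, originally defined on $X' \times X'$, extends continuously to $\widehat{X} \times \widehat{X}$, so that $\lim_{t\to 0} d_X(\gamma_1(t),\gamma_2(t)) = d_{\widehat X}(x_1, x_2)$. This is a standard fact about metric completions: the metric extends uniquely and continuously to the completion, and sequences converging in $\widehat X$ have distances converging to the distance of their limits. Granting that, the $1$-Lipschitz bound is immediate and there is no real obstacle. For $\mathrm{dir}$, the only thing to be careful about is that the naive comparison "$\dot\gamma_1$ close to $\dot\gamma_2$" requires the geodesics to be compared within one translation chart; but uniform closeness on all of $(0,\eps)$ forces them to agree to high precision near $t = \eps/2$ (away from the singular basepoint), where $X'$ is an honest flat surface with positive injectivity radius, so the comparison is legitimate. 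Thus both maps are continuous, and indeed the same estimates show $\mathrm{bp}$ is globally $1$-Lipschitz while $\mathrm{dir}$ is locally Lipschitz.
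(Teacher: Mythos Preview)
Your proof is correct and follows essentially the same approach as the paper's. For $\mathrm{bp}$ both you and the paper use the $1$-Lipschitz estimate $d_{\widehat X}(\mathrm{bp}(\gamma_1),\mathrm{bp}(\gamma_2)) \le d_\eps(\gamma_1,\gamma_2)$ obtained by letting $t\to 0$ in the pointwise bound; for $\mathrm{dir}$ both arguments place nearby trajectories in a common flat chart away from the singular set and read off the angular closeness there---the paper does this with a tubular ``gun-barrel'' neighborhood of a compact subsegment $\gamma([a,b])$, while you use a single embedded disk at $\gamma(\eps/2)$, but the underlying idea is the same.
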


\begin{proof}
Let $x \in \widehat{X}$ and $r > 0$. Suppose 
$d_X(x,\mathrm{bp}(\gamma)) < r$, and set 
$r' = r - d_X(x,\mathrm{bp}(\gamma))$. Then for any 
$\eta \in \mathcal{L}^\eps(X)$ such that $d_\eps(\gamma,\eta) 
< r'$, 
\[
d_X(x,\mathrm{bp}(\eta)) 
\le d_X(x,\mathrm{bp}(\gamma)) + d_\eps(\gamma,\eta) 
< d_X(x,\mathrm{bp}(\gamma)) + r' = r
\]
Thus the preimage of the open ball $B(x,r)$ is open in 
$\mathcal{L}^\eps(X)$. Therefore the basepoint map is continuous. 
To show that the direction map is continuous, fix $\gamma \in 
\mathcal{L}^\eps(X)$ and $\theta \in (0,\pi/2)$. Choose $a$ and 
$b$ such that $0 < a < b < \eps$. The segment $\gamma([a,b]) 
\subset X'$ is compact. Hence we can find $\delta>0$ such that
$U:=\bigcup_{t\in[a,b]}B(\gamma(t),\delta)$ is isometric to a 
Euclidean rectangle capped by two half discs of arbitrary small area. 
(One can think of $U$ as the barrel of a gun, through which we want 
to aim trajectories sufficiently close to $\gamma$.) We develop $U$ 
in the plane, where elementary geometry shows that $\delta$ can be 
chosen so that for every $\eta$ with  $d_{\eps}(\eta,\gamma)<\delta$ 
the direction $\mathrm{dir}(\eta)$ lies in a neighborhood centered at 
$\mathrm{dir}(\gamma)$ of radius $\theta$.
\end{proof}

Now the universal property of $\lX$ implies the following:

\begin{corollary}
The functions $\mathrm{bp} : \lX \to \widehat{X}$ and 
$\mathrm{dir} : \lX \to S^1$ are continuous.
\end{corollary}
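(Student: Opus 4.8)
The plan is to invoke the Universal Property of $\lX$ twice: once with $\mathfrak{T} = \widehat{X}$ and the maps $f_\eps = \mathrm{bp} : \mathcal{L}^\eps(X) \to \widehat{X}$, and once with $\mathfrak{T} = S^1$ and the maps $f_\eps = \mathrm{dir} : \mathcal{L}^\eps(X) \to S^1$. Both hypotheses of the universal property are already available: Proposition~\ref{P:bp&dir} supplies continuity of each $f_\eps$, and the identities $\mathrm{bp}\circ\rho_\eps^{\eps'} = \mathrm{bp}$ and $\mathrm{dir}\circ\rho_\eps^{\eps'} = \mathrm{dir}$, observed just before that proposition, are precisely the compatibility relations $f_{\eps'}\circ\rho_\eps^{\eps'} = f_\eps$ demanded by the universal property. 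Thus in each case we obtain a unique continuous map out of $\lX$ --- to $\widehat{X}$ in the first instance, to $S^1$ in the second --- whose composition with $\rho_\eps$ recovers $\mathrm{bp}$, respectively $\mathrm{dir}$, for every $\eps>0$.

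It then remains to identify these two maps with the basepoint and direction maps on $\lX$ itself. Two equivalent geodesics agree on the interval $(0,\min\{\eps,\eps'\})$, hence share both the limit $\lim_{t\to0}\gamma(t)\in\widehat{X}$ and the derivative $\dot\gamma\in S^1$; so $\mathrm{bp}$ and $\mathrm{dir}$ descend to well-defined set maps $\lX\to\widehat{X}$ and $\lX\to S^1$. By the set-level equality $\lX = \varinjlim\mathcal{L}^\eps(X)$ recorded in~\eqref{lX}, every element of $\lX$ is $\rho_\eps(\gamma) = [\gamma]$ for some $\eps$ and some $\gamma\in\mathcal{L}^\eps(X)$; on such an element the map produced by the universal property sends $[\gamma]$ to $\mathrm{bp}(\gamma) = \mathrm{bp}([\gamma])$, and similarly for the direction. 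Hence the universal maps coincide with $\mathrm{bp}$ and $\mathrm{dir}$ on $\lX$, and both are therefore continuous.

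I do not anticipate a real obstacle here: all the analytic work has been done in Proposition~\ref{P:bp&dir}, and the passage to $\lX$ is purely formal. The single point meriting a line of care is the identification in the previous paragraph --- confirming that the continuous map delivered abstractly by the direct-limit universal property is literally the concrete basepoint (resp.\ direction) map, and not merely some continuous map that agrees with it after precomposition with each $\rho_\eps$ --- which is immediate once one observes that the maps $\{\rho_\eps\}_{\eps>0}$ jointly surject onto $\lX$.
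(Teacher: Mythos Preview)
Your proposal is correct and is exactly the argument the paper intends: the paper simply says ``Now the universal property of $\lX$ implies the following'' and states the corollary without further proof, relying on Proposition~\ref{P:bp&dir} and the compatibility identities $\mathrm{bp}\circ\rho_\eps^{\eps'}=\mathrm{bp}$, $\mathrm{dir}\circ\rho_\eps^{\eps'}=\mathrm{dir}$. Your additional paragraph verifying that the abstract universal map coincides with the concrete $\mathrm{bp}$ (resp.\ $\mathrm{dir}$) is a welcome point of care that the paper leaves implicit.
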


\subsection{A generating set for the topology on $\lX$}

The definition of $\lX$ as a direct limit makes its topology 
somewhat obscure. In this section we introduce a generating set for 
this topology that will be useful, as we will see later, to prove 
topological statements about $\lX$.

For any $x \in X$ and $r > 0$, let $B(x,r)$ denote the open $d_X$-ball 
in $X$ centered at $x$ and having radius $r$. Then, for any $t > 0$, 
we set 
\[
\tilde{B}(x,r)^t 
= \big\{ [\gamma] \in \lX \mid \gamma(t)\in B(x,r) \big\}.
\]
Implicit in this definition is the assumption that, in order for 
$[\gamma] \in \tilde{B}(x,r)^t$, $[\gamma]$ must have a representative 
of length greater than $t$. The following technical claim is our main 
result for this section.

\begin{proposition}\label{P:BxIsBasis}
The collection of sets
\begin{equation}\label{Eq:subbasis}
\mathcal{B} :=
\big\{ \bb \mid x\in X',\ r > 0,\ t > 0 \big\}
\end{equation}
generates the limit topology in $\lX$.
\end{proposition}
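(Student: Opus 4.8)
The plan is to show two things: first, that every set $\bb \in \mathcal{B}$ is open in the direct limit topology on $\lX$, so that $\mathcal{B}$ consists of genuine open sets; and second, that every open set in the direct limit topology is a union of finite intersections of members of $\mathcal{B}$, i.e. that $\mathcal{B}$ is a subbasis. Throughout I would work via the universal property of the direct limit (equivalently, the defining fact that $W \subseteq \lX$ is open if and only if $\rho_\eps^{-1}(W)$ is open in $\leX$ for every $\eps > 0$), since that is the only handle we have on the limit topology.

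\medskip

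\noindent\textbf{Openness of the sets $\bb$.} Fix $x \in X'$, $r > 0$, $t > 0$. For each $\eps > 0$ I must check that $\rho_\eps^{-1}(\bb)$ is open in $\leX$. If $\eps \le t$ this preimage is empty (no representative of length $> t$ survives the restriction), hence open. If $\eps > t$, then $\rho_\eps^{-1}(\bb) = \{\gamma \in \leX \mid \gamma(t) \in B(x,r)\}$, which is exactly the preimage of the open ball $B(x,r) \subseteq X'$ under the evaluation-at-$t$ map $\mathrm{ev}_t : \leX \to X'$, $\gamma \mapsto \gamma(t)$. That evaluation map is $1$-Lipschitz with respect to the uniform metric $d_\eps$ (since $d_X(\gamma_1(t),\gamma_2(t)) \le d_\eps(\gamma_1,\gamma_2)$), hence continuous, so the preimage is open. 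Therefore $\bb$ is open in $\lX$.

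\medskip

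\noindent\textbf{$\mathcal{B}$ generates the topology.} Let $W \subseteq \lX$ be open and let $[\gamma] \in W$; I want a finite intersection of members of $\mathcal{B}$ containing $[\gamma]$ and contained in $W$. Choose a representative $\gamma : (0,\eps) \to X'$. Since $\rho_\eps^{-1}(W)$ is open in $(\leX, d_\eps)$ and contains $\gamma$, there is $\delta > 0$ with the uniform ball $B_{d_\eps}(\gamma,\delta) \subseteq \rho_\eps^{-1}(W)$. Now I pick finitely many parameters $0 < t_1 < t_2 < \dots < t_N < \eps$ that are $\delta/2$-dense in a compact subinterval $[a, \eps - a]$ carrying ``most'' of $\gamma$, together with the balls $\bigl(\tilde B(\gamma(t_j), \delta/2)^{t_j}\bigr)_{j=1}^N$; set $V = \bigcap_{j=1}^N \tilde B(\gamma(t_j),\delta/2)^{t_j}$, which is a finite intersection of members of $\mathcal{B}$ and visibly contains $[\gamma]$. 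The claim is that $V \subseteq W$. Given $[\eta] \in V$ with representative $\eta$, for each $j$ we have $d_X(\eta(t_j), \gamma(t_j)) < \delta/2$; because geodesics in a flat surface move at unit speed, $d_X(\eta(s),\eta(t_j)) \le |s - t_j|$ and likewise for $\gamma$, so on all of $[a,\eps-a]$ one gets $d_X(\eta(s),\gamma(s)) < \delta/2 + 2 \cdot (\delta/4) = \delta$ by choosing the $t_j$ to be $\delta/4$-dense; a short separate argument handles the ends $(0,a)$ and $(\eps - a, \eps)$ of the interval by the same unit-speed estimate, pushing $a$ small enough at the outset. Hence $d_\eps(\eta,\gamma) < \delta$ (after possibly shrinking constants), so $[\eta] \in \rho_\eps(B_{d_\eps}(\gamma,\delta)) \subseteq W$.

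\medskip

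\noindent\textbf{Main obstacle.} The delicate point is the last step: controlling the uniform distance $d_\eps(\eta,\gamma) = \sup_{0 < s < \eps} d_X(\eta(s),\gamma(s))$ — a supremum over the \emph{whole} open interval $(0,\eps)$, including $s$ near $0$ and near $\eps$ — using only \emph{finitely many} pointwise constraints at interior parameters $t_j$. The unit-speed (hence $1$-Lipschitz-in-time) property of geodesic trajectories is exactly what makes this possible: it interpolates the finite data to a uniform estimate on compact subintervals, and a boundary estimate at the two ends, so that a sufficiently fine finite net suffices. One must be slightly careful that $\eta$ is only assumed to have a representative of length $> t_N$, so its domain may be shorter than $(0,\eps)$; this is handled by noting that $[\eta] \in V$ forces a representative defined at least on $(0, t_N]$, and $t_N$ can be taken arbitrarily close to $\eps$, while the comparison of $[\eta]$ and $[\gamma]$ as elements of $\lX$ only needs agreement of the relevant restrictions — so one should phrase the conclusion as $[\eta] \in \rho_{\eps'}(B_{d_{\eps'}}(\gamma|_{(0,\eps')}, \delta))$ for a suitable $\eps' \le \eps$, which still lies in $W$ since $W$ is saturated under the identifications defining $\lX$ and $\rho_{\eps'}^{-1}(W)$ is open. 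I would allocate most of the write-up to making this interpolation estimate precise.
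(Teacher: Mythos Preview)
Your openness argument has a genuine gap. You claim that when $\eps \le t$ the preimage $\rho_\eps^{-1}\big(\bb\big)$ is empty, but this is false: a trajectory $\gamma \in \leX$ may extend to a longer geodesic $\tilde\gamma$ defined past time $t$, and then $[\gamma] = [\tilde\gamma]$, so membership in $\bb$ is decided by whether $\tilde\gamma(t) \in B(x,r)$. (For a trivial instance take $X = \C$, $\eps = 1$, $t = 2$: every $\gamma \in \mathcal{L}^1(\C)$ extends indefinitely.) Thus $\rho_\eps^{-1}\big(\bb\big)$ consists of those $\gamma \in \leX$ whose maximal extension reaches time $t$ and lands in $B(x,r)$ there, and showing this set is $d_\eps$-open is precisely the nontrivial half of the openness step. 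The paper handles it with a geometric ``gun barrel'' argument: enclose a compact arc $\gamma([a,t])$ of the extended trajectory in a thin Euclidean tube inside $X'$, then use finitely many constraints $\tilde{B}(\gamma(t_j), r_j)^{t_j}$ with $t_j < \eps$ together with continuity of the direction map (Proposition~\ref{P:bp&dir}) to force any $d_\eps$-nearby trajectory into the tube, so that it too extends past time $t$ and lands in $B(x,r)$. Your unit-speed interpolation idea from the second half is in the right spirit and could be adapted to this end, but you must actually supply such an argument rather than declare the preimage empty.

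Your second step (that $\mathcal{B}$ generates) is essentially the paper's approach, which packages the interpolation estimate into Lemma~\ref{L:Beps} identifying $\tau_\eps$ with the compact--open topology; the subtlety you flag about $[\eta]$ possibly lacking a length-$\eps$ representative is real, and your proposed fix of passing to a smaller $\eps'$ is workable.
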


First, we describe the restriction of this topology to each of the 
spaces $\mathcal{L}^\eps(X)$. Recall that we denote by $d_\eps$ the 
uniform metric on $\mathcal{L}^\eps(X)$.

\begin{lemma}\label{L:Beps}
Let $\tau_\eps$ denote the topology on $\mathcal{L}^\eps(X)$ induced 
by $d_\eps$. Then 
\[
\mathcal{B}^\eps := 
\big\{ \tilde{B}(x,r)^t \mid x \in X',\ 0 < r,\ 0 < t < \eps \big\}
\]
generates $\tau_\eps$.
\end{lemma}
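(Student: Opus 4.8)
The plan is to show that $\mathcal{B}^\eps$ is a subbasis (in fact essentially a basis once intersected) for the uniform topology $\tau_\eps$ by proving the two containments between the topologies. The easy direction is that each $\tilde{B}(x,r)^t$ is $\tau_\eps$-open: if $[\gamma] \in \tilde{B}(x,r)^t$, meaning $\gamma(t) \in B(x,r)$, then setting $r' = r - d_X(x,\gamma(t)) > 0$, any $\eta \in \mathcal{L}^\eps(X)$ with $d_\eps(\gamma,\eta) < r'$ satisfies $d_X(x,\eta(t)) \le d_X(x,\gamma(t)) + d_\eps(\gamma,\eta) < r$, so the whole $d_\eps$-ball of radius $r'$ around $\gamma$ lies in $\tilde{B}(x,r)^t$. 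This is the same computation as in the proof of Proposition~\ref{P:bp&dir} for the basepoint map. Hence every set in $\mathcal{B}^\eps$ is $\tau_\eps$-open, and the topology generated by $\mathcal{B}^\eps$ is coarser than $\tau_\eps$.

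The other direction is the substantive one: given $\gamma \in \mathcal{L}^\eps(X)$ and $\delta > 0$, I want to exhibit a finite intersection of sets from $\mathcal{B}^\eps$ that contains $\gamma$ and is contained in the $d_\eps$-ball $B_{d_\eps}(\gamma,\delta)$. The idea is to cover the curve $\gamma$ by finitely many small balls along a fine partition of the parameter interval. Fix $0 < t_1 < t_2 < \cdots < t_N < \eps$ chosen $\delta/4$-dense in a subinterval $[a,b] \subset (0,\eps)$ whose complement is short, and consider $W = \bigcap_{k=1}^N \tilde{B}(\gamma(t_k), \delta/4)^{t_k}$. Clearly $\gamma \in W$. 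For the reverse inclusion I must argue that any $\eta \in W$ stays uniformly $\delta$-close to $\gamma$ on all of $(0,\eps)$: at each sample time $\eta(t_k)$ is within $\delta/4$ of $\gamma(t_k)$, and then I use the fact that both $\gamma$ and $\eta$ are unit-speed geodesics together with the geometry of the flat metric — specifically, once two geodesics are $\delta/4$-close at a point and close at a nearby point (within parameter distance $\delta/4$), they develop into nearly parallel segments in the plane and stay close on the intervening and nearby interval by a triangle-inequality / developing argument. The short uncovered pieces near $0$ and near $\eps$ need a separate small argument: near $0$ one uses that a linear approach is a \emph{geodesic}, so its behavior on $(0,a)$ is controlled by its value and direction at $a$; near $\eps$ similarly by its value at $b$.

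Concretely, the developing argument I have in mind mirrors the ``barrel of a gun'' picture from Proposition~\ref{P:bp&dir}: around a compact sub-geodesic $\gamma([a,b])$ one can find a neighborhood isometric to a planar rectangle with rounded caps, and inside that planar chart a unit-speed geodesic passing $\delta/4$-close to $\gamma(t_k)$ and to $\gamma(t_{k+1})$ is forced, by elementary Euclidean geometry, to lie within $\delta$ of $\gamma$ throughout $[t_k, t_{k+1}]$ and indeed for a little while beyond each end; chaining these estimates over $k = 1, \dots, N-1$ and extending to the two short end-intervals controls the whole of $(0,\eps)$. This shows $W \subseteq B_{d_\eps}(\gamma,\delta)$, so $\mathcal{B}^\eps$ generates a topology finer than $\tau_\eps$. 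Combining the two directions gives equality.

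The main obstacle I anticipate is the geometric estimate that being close at two sampled times forces uniform closeness in between and slightly beyond — one has to be careful that the sampled points stay in a single planar chart and that the bound ``$\delta/4$-close at samples spaced $\delta/4$ apart'' really does propagate to ``$\delta$-close everywhere'' without the constants deteriorating, especially handling the part of the domain near $t=0$ where the curve is approaching a possibly wild singularity and where no compactness is available (here one leans essentially on the geodesic, hence rigid, nature of linear approaches rather than on any local finiteness of the surface).
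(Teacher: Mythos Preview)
Your two-inclusion strategy is correct and is precisely what the paper's one-line proof (``a straightforward variant of the well-known fact that the uniform topology on a collection of maps from one metric space to another coincides with the compact-open topology'') leaves to the reader. Since $(0,\eps)$ is not compact, the ``variant'' genuinely requires the extra input that the maps in $\mathcal{L}^\eps(X)$ are uniformly Lipschitz, so your spelling-out is not wasted effort.

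However, you are making the second inclusion harder than it is, and the obstacle you anticipate near $t=0$ does not exist. The only property you need is that unit-speed curves are $1$-Lipschitz, which gives immediately
\[
d_X\big(\gamma(t),\eta(t)\big) \le d_X\big(\gamma(t_k),\eta(t_k)\big) + 2\,|t-t_k|
\]
for every $t,t_k\in(0,\eps)$. Now choose finitely many $t_1<\cdots<t_N$ in $(0,\eps)$ so that every $t\in(0,\eps)$ lies within $\delta/4$ of some $t_k$; this is possible because $\eps<\infty$, and it covers the endpoints too (take $t_1<\delta/4$ and $t_N>\eps-\delta/4$). For $\eta\in\bigcap_k\tilde{B}(\gamma(t_k),\delta/4)^{t_k}$ the displayed bound yields $d_\eps(\gamma,\eta)\le 3\delta/4<\delta$. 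No developing maps, no ``barrel of a gun,'' no separate treatment of the ends, and no appeal to geodesic rigidity: the estimate is purely metric, holds uniformly on $(0,\eps)$, and is entirely indifferent to whatever wild singularity sits at the limit $t\to 0$. Your route through planar charts and direction control would also go through, but it obscures the elementary reason the lemma holds.
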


\begin{proof}
This is a straightforward variant of the well-known fact that the 
uniform topology on a collection of maps from one metric space to 
another coincides with the compact-open topology.
\end{proof}

\begin{proof}[Proof of Proposition~\ref{P:BxIsBasis}] 
Clearly $\mathcal{B}$ defines a covering of $\lX$. We denote by 
$\tau'$ the topology generated by $\mathcal{B}$ and by $\tau$ the 
limit topology on $\lX$. The proof requires two steps.

\emph{Step 1:} $\tau'\subseteq\tau$. That is, every element of 
$\mathcal{B}$ is open in the direct limit topology. This means 
we need to show that the intersection of each $\tilde{B}(x,r)^t 
\in \mathcal{B}$ with $\mathcal{L}^\eps(X) \subset \mathcal{L}(X)$ is 
open for every $\eps > 0$. If $t < \eps$, then by Lemma~\ref{L:Beps} 
we are done. If, on the other hand, $t \ge \eps$, then given 
$[\gamma] \in \tilde{B}(x,r)^t$ we need to find an open set in 
$\tilde{B}(x,r)^t \cap \mathcal{L}^\eps(X)$ containing $[\gamma]$. 
Choose $a$ such that $0 < a < \eps$ and take $\delta < r$ such 
that, as in the proof of Proposition~\ref{P:bp&dir}, the 
$\delta$-neighborhood $U$ of $\gamma([a,t])$ is isometric to a 
Euclidean rectangle capped by half-discs. Choose times 
$t_1, \dots, t_n$ and radii $r_1, \dots, r_n$ so that each  
$B(\gamma(t_n),r_n)$ lies in $U$ and $\{B(\gamma(t_n),r_n)\}$ covers 
$\gamma([a,\eps])$. Then every trajectory in 
$\bigcap_{i=1}^n \tilde{B}(\gamma(t_n),r_n)^{t_n}$ remains in $U$ 
from time $a$ to $\eps$. Let $\alpha \subset S^1$ be the arc around 
$\mathrm{dir}(\gamma)$ of length $\arctan(\delta/t)$. Then, by 
continuity of the direction map, $\mathrm{dir}\inv(\alpha)$ is open 
in $\mathcal{L}^\eps(X)$. Hence 
\[
\bigg( \bigcap_{i=1}^n \tilde{B}(\gamma(t_n),r_n)^{t_n} \bigg)
\cap \mathrm{dir}\inv(\alpha) 
\]
is an open set in $\tilde{B}(x,r)^t \cap \mathcal{L}^\eps(X)$ 
containing $[\gamma]$.

\emph{Step 2:} $\tau\subseteq\tau'$. Let $V\in\tau$. We need to show 
that each element of $V$ is contained in a finite intersection 
$\bigcap U_i$ of sets $U_i$ in $\mathcal{B}$, such that this 
intersection is itself contained in $V$. Choose $[\gamma]\in V$ 
having a representative $\gamma\in\leX$. Identify $\rho_\eps^{-1}(V)$ 
with $V_\eps:=V\cap \leX\in\tau_\eps$. Using the generating set for 
$\tau_\eps$ provided by Lemma~\ref{L:Beps}, we can find a finite 
collection $\{\tilde{B}_\eps(x_i,r_i)^{t_i}\}_{i=1}^n$ such that 
the intersection $I$ of its elements satisfies 
$[\gamma]\in I\subset V\cap \leX\subset V$. Hence $\tau\subset\tau'$.
\end{proof}

\subsection{Topological consequences}

In this section we explore some topological properties of $\lX$. 
Recall that we identify $\leX$ with its image in $\lX$ by $\rho_\eps$.

\begin{corollary}
For all $t > 0$, $\bigcup_{\eps>t} \leX$ is open in $\lX$.
\end{corollary}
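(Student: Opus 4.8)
The plan is to realize $W:=\bigcup_{\eps>t}\leX$ as a union of members of the generating family $\mathcal{B}$ of Proposition~\ref{P:BxIsBasis}, each of which we already know to be open. Under the identifications $\mathcal{L}^{\eps}(X)\subseteq\mathcal{L}^{\eps'}(X)$ valid for $\eps'\leq\eps$ (restriction of trajectories to a shorter interval), a class $[\gamma]\in\lX$ lies in $W$ precisely when it has a representative defined on some interval $(0,\eps)$ with $\eps>t$; equivalently, when $[\gamma]$ has a representative of length strictly greater than $t$.

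The crux is the set identity
\[
W=\bigcup\big\{\,\tilde{B}(x,r)^{s}\ \big|\ x\in X',\ r>0,\ s>t\,\big\}.
\]
For the inclusion $\supseteq$, note that by the length convention built into the definition of $\tilde{B}(x,r)^{s}$, any $[\gamma]\in\tilde{B}(x,r)^{s}$ has a representative of length greater than $s$; if in addition $s>t$, this representative has length greater than $t$, so $[\gamma]\in W$. For $\subseteq$, given $[\gamma]\in W$ choose a representative $\gamma:(0,\eps)\to X'$ with $\eps>t$, pick any $s$ with $t<s<\eps$, and observe that $\gamma(s)\in X'$, so that $[\gamma]\in\tilde{B}(\gamma(s),1)^{s}$ with $s>t$; hence $[\gamma]$ belongs to the right-hand side.

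It then remains to invoke that each individual set $\tilde{B}(x,r)^{s}$ is open in the direct limit topology on $\lX$ --- this is exactly what Step~1 in the proof of Proposition~\ref{P:BxIsBasis} establishes --- so that $W$, being a union of such sets, is open. I do not expect a genuine obstacle here; the only points needing care are to use this strengthened openness statement rather than merely ``$\mathcal{B}$ generates the topology'', and to keep in mind that the implicit requirement ``a representative of length $>s$'' in the definition of $\tilde{B}(x,r)^{s}$ is precisely what links membership in $\tilde{B}(x,r)^{s}$ for $s>t$ to membership in $W$.
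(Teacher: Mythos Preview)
Your proof is correct and follows essentially the same route as the paper: both arguments exploit the built-in length requirement in the definition of $\tilde{B}(x,r)^{s}$ to see that any such set with $s>t$ lies inside $W$, and then exhibit for each $[\gamma]\in W$ a specific $\tilde{B}(\gamma(s),r)^{s}$ with $t<s<\eps$ containing it. The only cosmetic difference is that you package the argument as a single set identity and invoke Step~1 of Proposition~\ref{P:BxIsBasis} explicitly, whereas the paper phrases it pointwise.
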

\begin{proof}
If $[\gamma]$ has a representative $\gamma$ of length $\eps > t$ 
and $t'$ satisfies $t<t'<\eps$, then there exists some $r>0$ such 
that $\tilde{B}(\gamma(t'),r)^{t'}$ lies in $\leX\subset\lX$, 
contains $[\gamma]$ and consists of germs of trajectories $[\eta]$ 
having representatives of length greater than $t$ and satisfying 
the condition $d_X(x,\eta(t)) < r$.
\end{proof}

\begin{definition}[Maximal length function]\label{D:MLF}
Let $\ell$ be defined on $\lX$ by 
\[
\ell[\gamma] 
= \sup \left\{ \eps > 0 \mid 
               \eta \in \mathcal{L}^\eps(X),\
               \rho_\eps(\eta) = [\gamma]
       \right\}.
\]
This is the {\em maximal length function}; it measures the longest 
a representative of the class $[\gamma]$ can be. It takes values in 
the positive extended reals.
\end{definition}

\begin{corollary}
The function $\ell : \lX \to (0,\infty]$ is lower semi-continuous.
\end{corollary}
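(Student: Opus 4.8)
The plan is to show that for every $c \in (0,\infty)$ the set $\{[\gamma] \in \lX \mid \ell[\gamma] > c\}$ is open, which is exactly the statement that $\ell$ is lower semi-continuous. First I would observe that this set is precisely $\bigcup_{\eps > c} \leX$, with the usual identification of $\leX$ with its image under $\rho_\eps$: a class $[\gamma]$ has a representative of length $>c$ if and only if its maximal length exceeds $c$, since the lengths of representatives of $[\gamma]$ form an interval with supremum $\ell[\gamma]$ (if $\eta \in \leX$ represents $[\gamma]$ and $\eps' < \eps$, then $\rho_\eps^{\eps'}(\eta)$ also represents $[\gamma]$). Then the result is immediate from the Corollary just proved, stating that $\bigcup_{\eps > t} \leX$ is open in $\lX$ for all $t > 0$.

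So the entire argument reduces to two short verifications: first, that $\ell^{-1}((c,\infty]) = \bigcup_{\eps > c}\leX$, and second, an appeal to the preceding corollary. For the first, I would spell out both inclusions. If $\ell[\gamma] > c$, then by definition of the supremum there is some $\eps$ with $c < \eps \le \ell[\gamma]$ and a representative $\eta \in \leX$ with $\rho_\eps(\eta) = [\gamma]$, so $[\gamma] \in \leX \subseteq \bigcup_{\eps > c}\leX$. Conversely, if $[\gamma] \in \leX$ for some $\eps > c$, then $[\gamma]$ has a representative of length $\eps$, so $\ell[\gamma] \ge \eps > c$. This gives the set equality, and lower semi-continuity of $\ell$ is by definition the openness of all such preimages.

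There is essentially no obstacle here: the content was already packaged into the previous corollary (which in turn rests on Proposition~\ref{P:BxIsBasis} and the generating set $\mathcal{B}$), so the only thing to be careful about is the bookkeeping with the identifications $\leX \hookrightarrow \lX$ and the direction of the order $\trianglelefteq$ versus $\le$, making sure that "representative of length $> c$" is the correct translation of $\ell[\gamma] > c$. In particular I would note that $\ell$ may well take the value $+\infty$ (e.g.\ at infinite-angle singularities or spires), and that $\ell^{-1}((c,\infty])$ rather than $\ell^{-1}((c,\infty))$ is the relevant preimage; this causes no trouble since for real $c$ the two descriptions of the set coincide with $\bigcup_{\eps>c}\leX$ regardless.
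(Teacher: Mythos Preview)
Your proposal is correct and follows exactly the route the paper intends: the paper states this corollary with no proof, immediately after the corollary that $\bigcup_{\eps>t}\leX$ is open and the definition of $\ell$, so it is meant to be the observation $\ell^{-1}((c,\infty]) = \bigcup_{\eps>c}\leX$ that you spell out. One small aside: your parenthetical example of where $\ell = \infty$ is slightly off (infinite maximal length means the trajectory itself extends indefinitely without hitting a singularity, not that its basepoint is an infinite-angle singularity), but this does not affect the argument.
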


We recall that lower semi-continuity of a real-valued function 
$\phi$ on a topological space $\mathfrak{T}$ means either of the 
following (equivalent) properties holds:
\begin{itemize}
\item For all $t \in \R$, $\phi\inv((t,\infty))$ is open in 
$\mathfrak{T}$.
\item If $\{x_n\}_{n=1}^\infty$ is a sequence of points in 
$\mathfrak{T}$ such that $x_n \to x_\infty$, then $\phi(x_\infty) 
\le \liminf_{n\to\infty} \phi(x_n)$.
\end{itemize}
Using the standard proof of the Extreme Value Theorem for 
continuous real-valued functions on compact spaces, one can see 
that a lower semi-continuous function achieves a minimum value 
on any compact subset of its domain.

\begin{corollary}\label{sc2zero}
Sequences in $\mathcal{L}(X)$ whose lengths tend to zero have no 
accumulation points in $\lX$, and thus they form closed subsets. 
\end{corollary}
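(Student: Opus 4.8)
The plan is to deduce this directly from the lower semi-continuity of the maximal length function $\ell$ together with the observation that $\ell[\gamma] \geq \eps$ whenever $[\gamma]$ has a representative in $\leX$. Suppose $\{[\gamma_n]\}$ is a sequence in $\lX$ whose lengths tend to zero; here ``length'' must mean the length of the chosen representatives, so $\gamma_n \in \mathcal{L}^{\eps_n}(X)$ with $\eps_n \to 0$, hence $\ell[\gamma_n] \geq \eps_n$ but more importantly we want to show no subsequential limit exists. Assume for contradiction that some subsequence converges to $[\gamma_\infty] \in \lX$. First I would note that $\ell[\gamma_\infty] > 0$ by Definition~\ref{D:MLF}, since $[\gamma_\infty]$ has \emph{some} representative of positive length. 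Then lower semi-continuity of $\ell$ gives $\ell[\gamma_\infty] \leq \liminf_n \ell[\gamma_n]$.

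The subtlety is that $\ell[\gamma_n]$ could be much larger than $\eps_n$ — the class might have representatives far longer than the particular one whose length we were tracking. So the naive approach ``$\ell[\gamma_n] = \eps_n \to 0$'' fails. Instead I would work with the sets $\bigcup_{\eps > t}\leX$, which the preceding corollary shows are open in $\lX$. If $[\gamma_\infty]$ has a representative of length $\eps_0 > 0$, pick $t$ with $0 < t < \eps_0$; then $[\gamma_\infty] \in \bigcup_{\eps > t}\leX$, an open set. By convergence, $[\gamma_n]$ eventually lies in this open set, meaning $[\gamma_n]$ eventually has a representative of length $> t$. But here is where I must be careful about what ``lengths tend to zero'' means: if it means that \emph{every} representative of $[\gamma_n]$ has length tending to zero — equivalently $\ell[\gamma_n] \to 0$ — then the contradiction is immediate, since eventually $\ell[\gamma_n] > t > 0$. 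I would adopt this reading (it is the only one making the statement true), and state it cleanly: the hypothesis is $\ell[\gamma_n] \to 0$.

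With that reading fixed, the proof is short. Suppose $[\gamma_\infty]$ is an accumulation point of such a sequence. Set $\eps_0 = \ell[\gamma_\infty] > 0$ and choose $t \in (0, \eps_0)$. The set $\mathcal{O} := \bigcup_{\eps > t}\leX$ is open in $\lX$ and contains $[\gamma_\infty]$. Since $[\gamma_\infty]$ is an accumulation point, infinitely many $[\gamma_n]$ lie in $\mathcal{O}$, so for infinitely many $n$ we have $\ell[\gamma_n] > t$, contradicting $\ell[\gamma_n] \to 0$. Hence no accumulation point exists. For the ``closed subsets'' clause: if $A \subseteq \lX$ is the set of terms of such a sequence (or any set on which $\ell$ restricts to a function tending to zero, equivalently $\inf$ over any infinite subset is $0$ with no positive lower bound attained on subsequences), then $A$ has no accumulation points in $\lX$, so $A$ contains all its limit points and is closed. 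I would phrase this last part as: a set $A \subseteq \lX$ on which $\ell$ takes values in a sequence tending to $0$ is closed, because its complement is open — around any $[\gamma] \notin A$ with $\ell[\gamma] = \eps_0$, the neighborhood $\bigcup_{\eps > t}\leX$ for $t \in (0,\eps_0)$ meets $A$ in at most finitely many points, which can be separated off using that points are closed (here one may invoke that $\lX$, or at least the relevant subspace, is $T_1$, which follows since the generating sets $\bb$ from Proposition~\ref{P:BxIsBasis} separate points — a representative is determined near $0$ by its values, so distinct classes are distinguished by some $\bb$).

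The main obstacle I anticipate is precisely pinning down the meaning of ``lengths tend to zero'' and confirming $T_1$-ness (or whatever separation is needed) to upgrade ``no accumulation points'' to ``closed''; the rest is a one-line application of the open sets $\bigcup_{\eps>t}\leX$. If the separation axiom is awkward to verify directly, an alternative is to show the complement of $A$ is open pointwise: given $[\delta] \notin A$, the open set $\bigcup_{\eps > t}\leX$ (with $t < \ell[\delta]$) contains only finitely many points of $A$, and finite sets in $\lX$ are closed provided singletons are — so one intersects with the complement of that finite set. I would make the singleton-closedness explicit by noting two linear approaches with the same germ at $0$ are equal, so $\{[\gamma]\} = \bigcap_{t} \bb$-type intersections pick it out, giving a closed point.
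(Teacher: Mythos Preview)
Your proof is correct and follows essentially the same approach as the paper's: both show that any $[\gamma]\in\lX$ has a neighborhood eventually missing the sequence, using that membership in such a neighborhood forces a representative of length $>t$. The paper uses a subbasic set $\tilde{B}(\gamma(t),r)^t$ directly, while you use the open set $\bigcup_{\eps>t}\leX$ from the preceding corollary---these are interchangeable here.

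One small simplification: your worry about $T_1$-ness for the ``closed'' clause is unnecessary. In any topological space, $\overline{A}=A\cup A'$ where $A'$ is the set of accumulation points of $A$; so if $A'=\emptyset$ then $A$ is already closed, with no separation axiom required. The paper likewise does not argue this point separately.
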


\begin{proof}
Suppose $\{[\gamma_n]\}_{n=1}^\infty$ is a sequence of elements 
of $\lX$ such that $\ell[\gamma_n] \to 0$, and $[\gamma]$ is any 
element of $\mathcal{L}(X)$. Then $[\gamma]$ has a representative 
of length $\eps > 0$, and for any $t \in (0,\eps)$, there exists 
some $N$ such that, for all $n \ge N$, $[\gamma_n]$ does not have 
a representative of length at least $t$, and so 
$[\gamma_n] \notin\tilde{B}(\gamma(t),r)^t$ for $r$ small enough 
and $n\geq N$.
\end{proof}

In particular, any sequence of linear approaches determined by 
saddle connections whose lengths tend to zero form a closed set 
in $\mathcal{L}(X)$.

\begin{corollary}
$\mathcal{L}(X)$ is Hausdorff.
\end{corollary}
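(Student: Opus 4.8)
The plan is to show that any two distinct points of $\lX$ can be separated by disjoint open sets, using the generating set $\mathcal{B}$ from Proposition~\ref{P:BxIsBasis} together with the continuity of the basepoint and direction maps. Given distinct linear approaches $[\gamma_1] \ne [\gamma_2]$ in $\lX$, I would first dispose of the easy case where their basepoints differ: since $\widehat{X}$ is a metric space, hence Hausdorff, and $\mathrm{bp} : \lX \to \widehat{X}$ is continuous, disjoint open neighborhoods of $\mathrm{bp}[\gamma_1]$ and $\mathrm{bp}[\gamma_2]$ pull back to disjoint open neighborhoods separating the two approaches. Similarly, if the directions $\mathrm{dir}[\gamma_1] \ne \mathrm{dir}[\gamma_2]$ differ, continuity of $\mathrm{dir} : \lX \to S^1$ into the Hausdorff space $S^1$ does the job. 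So the only remaining case is when $[\gamma_1]$ and $[\gamma_2]$ share both basepoint $x$ and direction $\theta$, yet are not equivalent.

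In that remaining case, the two trajectories issue from the same point $x$ in the same direction but eventually diverge: there is some time $t_0 > 0$ at which representatives $\gamma_1, \gamma_2 : (0,\eps) \to X'$ (taking $\eps$ smaller than the length of both, which exists because inequivalence forces them to disagree somewhere in their common domain of definition, so in particular both have positive-length representatives) satisfy $\gamma_1(t_0) \ne \gamma_2(t_0)$ in $X'$. Writing $2\delta := d_X(\gamma_1(t_0), \gamma_2(t_0)) > 0$, the sets $\tilde B(\gamma_1(t_0), \delta)^{t_0}$ and $\tilde B(\gamma_2(t_0), \delta)^{t_0}$ are elements of $\mathcal{B}$, hence open in $\lX$ by Step~1 of the proof of Proposition~\ref{P:BxIsBasis}; they contain $[\gamma_1]$ and $[\gamma_2]$ respectively; and they are disjoint, since any $[\eta]$ lying in both would have a representative with $\eta(t_0)$ in both $B(\gamma_1(t_0),\delta)$ and $B(\gamma_2(t_0),\delta)$, contradicting the triangle inequality. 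This separates the last case and completes the argument.

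The one point that requires a little care — and which I expect to be the main (minor) obstacle — is justifying the existence of a time $t_0$ at which the two representatives genuinely differ as points of $X'$. By definition of the equivalence $\sim$ on $\lX$, $[\gamma_1] \ne [\gamma_2]$ means that for some representatives $\gamma_i : (0,\eps_i) \to X'$ we have $\gamma_1(t) \ne \gamma_2(t)$ for some $t \in (0, \min\{\eps_1,\eps_2\})$; take $t_0$ to be such a $t$. Note that a geodesic trajectory that agrees with another on an open subinterval must agree with it throughout their common domain (two unit-speed geodesics in a flat surface are determined by a single point and direction, and locally isometric to straight lines), so in fact the set of times where they agree is both open and closed in the common interval — but we do not even need this rigidity statement, only a single time of disagreement, which inequivalence hands us directly. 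Once $t_0$ is fixed, the rest is the routine ball-separation above.
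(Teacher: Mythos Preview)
Your proof is correct, and its core argument---finding a time $t_0$ in the common domain where the representatives differ and separating with $\tilde{B}(\gamma_i(t_0),\delta)^{t_0}$---is exactly the paper's proof. The preliminary case split on basepoints and directions is unnecessary, since that final argument already handles all cases uniformly (distinct classes always have representatives disagreeing at some time, regardless of whether $\mathrm{bp}$ or $\mathrm{dir}$ agree); the paper simply omits the split and goes straight to the general case.
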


\begin{proof}
Suppose $[\gamma_1] \ne [\gamma_2]$. Then there exist 
representatives $\gamma_1$ and $\gamma_2$ of the respective 
classes and some $t$ in their common domain such that 
$\gamma_1(t) \ne \gamma_2(t)$. Set 
$r = \frac{1}{2} d_X(\gamma_1(t),\gamma_2(t))$. Then 
$\tilde{B}(\gamma_1(t),r)^t$ and $\tilde{B}(\gamma_2(t),r)^t$ 
are disjoint open sets containing $[\gamma_1]$ and $[\gamma_2]$, 
respectively.
\end{proof}

\begin{remark}\label{R:notreg}
The space $\mathcal{L}(X)$ is not in general metrizable. Indeed, 
it is not even regular. Recall that a topological space is 
{\em regular} if any point and any closed subset can be separated 
by disjoint open neighborhoods. For example, consider as $X$ the 
geometric construction performed on $\R^2$ as in 
\S\ref{S:definitions}, Example~\ref{Ex:geoconst}. Any neighborhood 
of the horizontal trajectory emanating from the far right of the 
picture intersects any open set containing the sequence of saddle 
connections determined by the segments $I_n$. But, as observed 
following Corollary~\ref{sc2zero}, these saddle connections form 
a closed set.
\end{remark}

\begin{corollary}
$\mathcal{L}(X)$ is second-countable.
\end{corollary}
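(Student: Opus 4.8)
The plan is to exhibit an explicit countable generating family drawn from the subbasis $\mathcal{B}$ of Proposition~\ref{P:BxIsBasis}. Since $X'$ is a connected surface it is second-countable, and the flat metric $d_X$ is separable; fix a countable dense set $D \subseteq X'$ and note that the balls $\{B(q,r) \mid q \in D,\ r \in \Q^{+}\}$ form a countable basis for $X'$. The natural candidate is then
\begin{equation*}
\mathcal{B}_0 := \big\{ \tilde{B}(q,r)^t \mid q \in D,\ r \in \Q^{+},\ t \in \Q^{+} \big\},
\end{equation*}
which is manifestly countable. The claim to prove is that $\mathcal{B}_0$ generates the same topology as $\mathcal{B}$, hence (by Proposition~\ref{P:BxIsBasis}) the direct limit topology on $\lX$.

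First I would record the elementary monotonicity facts about the sets $\tilde B(x,r)^t$: if $B(q,r') \subseteq B(x,r)$ then $\tilde B(q,r')^t \subseteq \tilde B(x,r)^t$ (the condition $\gamma(t)\in B(q,r')$ only gets more restrictive), so approximating the \emph{spatial} ball is straightforward using density of $D$ and $\Q^{+}$. The genuinely new point is that the \emph{time} parameter $t$ must also be rationalized, and here $\tilde B(x,r)^t$ is \emph{not} monotone in $t$ in any obvious way — moving $t$ moves the point $\gamma(t)$ along the geodesic. So given $[\gamma] \in \tilde B(x,r)^t$ with $t$ irrational, I would pick a rational $t'$ close to $t$ with $t' < \ell[\gamma]$; since $\gamma$ is a unit-speed geodesic, $d_X(\gamma(t'),\gamma(t)) \le |t'-t|$ is small, so $\gamma(t') \in B(x, r - \eta)$ for suitable $\eta > 0$ once $|t-t'|$ is small enough, and then I can shrink to a rational ball $B(q,r'') \subseteq B(x,r-\eta)$ around $\gamma(t')$ with $q \in D$. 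This produces $[\gamma] \in \tilde B(q,r'')^{t'} \in \mathcal{B}_0$. The remaining step is to check containment $\tilde B(q,r'')^{t'} \subseteq \tilde B(x,r)^t$ (or rather that a finite intersection of $\mathcal{B}_0$-sets lands inside the given $\mathcal{B}$-set): here I would invoke the continuity of the evaluation, namely that any $[\eta] \in \tilde B(q,r'')^{t'}$ with a representative of length $> t$ satisfies $d_X(\eta(t),\eta(t')) \le |t-t'|$, so $\eta(t) \in B(x,r)$; to also force $[\eta]$ to have length exceeding $t$ I would throw in finitely many more sets $\tilde B(q_j,\rho_j)^{s_j}$ with rational data, chosen (as in Step 1 of the proof of Proposition~\ref{P:BxIsBasis}, using the ``barrel of a gun'' neighborhood $U$) so that trajectories passing through all of them must follow $\gamma$ past time $t$.

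I expect the main obstacle to be exactly this last bookkeeping: arranging that a \emph{finite} intersection of basic sets with rational centers, radii, and times both contains $[\gamma]$ and is contained in the prescribed $\tilde B(x,r)^t$, while simultaneously guaranteeing that every trajectory in that intersection has a representative long enough for the condition ``$\gamma(t) \in B(x,r)$'' to make sense. The cleanest route is to mimic the argument already given in Step~1 of the proof of Proposition~\ref{P:BxIsBasis}: cover a compact subarc $\gamma([a,t])$ of the geodesic by finitely many balls lying in an isometric-to-Euclidean-rectangle neighborhood $U$, replace each such ball by a slightly larger one with rational center in $D$ and rational radius and rational time, and observe that any trajectory meeting all of these is trapped in $U$ and hence extends past time $t$ and stays $d_X$-close to $\gamma$ there. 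With that in hand, second-countability follows formally: $\mathcal{B}_0$ is countable, and finite intersections of its members form a countable basis. I would close by remarking that, combined with the earlier corollary that $\lX$ is Hausdorff, one does \emph{not} get metrizability for free, since Remark~\ref{R:notreg} shows $\lX$ can fail to be regular.
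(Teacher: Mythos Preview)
Your approach is essentially the paper's: both take the countable subfamily $\mathcal{B}_0=\{\tilde B(q,r)^t : q\in D,\ r,t\in\Q^+\}$ of the subbasis from Proposition~\ref{P:BxIsBasis} and argue it still generates, with the paper simply asserting this in one line while you supply the details. Your argument is correct, but the ``barrel'' bookkeeping to force $[\eta]$ to have length exceeding $t$ is unnecessary: since $[\gamma]\in\tilde B(x,r)^t$ already has a representative of length $>t$, you may choose the rational time $t'$ with $t'>t$ (rather than $t'<t$), and then every $[\eta]\in\tilde B(q,r'')^{t'}$ automatically has a representative defined at $t$ with $d_X(\eta(t),\eta(t'))\le t'-t$, giving the containment $\tilde B(q,r'')^{t'}\subseteq\tilde B(x,r)^t$ directly.
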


\begin{proof}
$X$ itself is second-countable because it is a Riemann surface, 
and so it has a countable dense subset $S$. The sets 
$\tilde{B}(x,r)^t$, where $x \in S$, $r \in \Q$, and 
$t \in \Q$ (with $r > 0$, $t > 0$) generate the topology of 
$\mathcal{L}(X)$.
\end{proof}

The following proposition shows that the space $\lX$ is an extension 
of the unit tangent bundle of the flat surface $X$.

\begin{proposition}\label{P:NatEmbedding}
There is a natural topological embedding of the unit tangent bundle 
of $X'$ into $\lX$.
\end{proposition}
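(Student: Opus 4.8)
The plan is to construct the embedding explicitly and then verify it is a homeomorphism onto its image using the tools developed above. Given a point $p \in X'$ and a unit tangent vector $v \in S^1$ at $p$ (using the trivialization $T_1(X') = X' \times S^1$), there is a small $\eps>0$ such that the geodesic $\gamma_{p,v}:(0,\eps)\to X'$ with $\gamma_{p,v}(t) = p + tv$ (in a coordinate chart) is a well-defined unit speed trajectory; I would actually reparametrize so the geodesic \emph{passes through} $p$ at an interior time, i.e. take the trajectory $t \mapsto p + (t - \eps/2)v$ on $(0,\eps)$ for $\eps$ small enough that this stays in $X'$. Define $\iota : T_1(X') \to \lX$ by sending $(p,v)$ to the class of this trajectory. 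This is well-defined because shrinking $\eps$ does not change the equivalence class, and it is injective because the basepoint at time $\eps/2$ together with the direction recovers $(p,v)$ — concretely, $\iota(p_1,v_1) = \iota(p_2,v_2)$ forces a common representative agreeing near $0$, hence agreeing on an interval, hence $p_1 = p_2$ and $v_1 = v_2$.

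Next I would check continuity of $\iota$. Working locally, a point $(p,v)$ has a neighborhood in $T_1(X')$ on which the assignment $(q,w) \mapsto$ (the trajectory $t \mapsto q + (t-\eps/2)w$ on a uniformly chosen $(0,\eps)$) lands in a single $\leX$, and on $\leX$ continuity is easy: if $(q,w)$ is close to $(p,v)$ then the sup-distance $d_\eps$ between the two trajectories is small (they are straight segments with nearby basepoints and nearby directions, so uniformly close on the compact parameter interval). Composing with the continuous inclusion $\rho_\eps : \leX \to \lX$ gives continuity of $\iota$ near $(p,v)$, and $(p,v)$ was arbitrary.

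For the reverse direction — that $\iota$ is an open map onto its image, equivalently a homeomorphism onto the image — I would use the generating set $\mathcal{B}$ from Proposition~\ref{P:BxIsBasis}. It suffices to show that for each basic open set $\bb$ with $x \in X'$, the preimage $\iota^{-1}(\bb)$ is open in $T_1(X')$, \emph{and} that the image $\iota(T_1(X'))$ is characterized as exactly those classes with basepoint in $X'$, using the continuity of $\mathrm{bp}$ and $\mathrm{dir}$ from the Corollary to Proposition~\ref{P:bp&dir}: indeed $(\mathrm{bp},\mathrm{dir})$ restricted to the image $\iota(T_1(X'))$ is a continuous inverse to $\iota$, since $\mathrm{bp}([\gamma_{p,v}]) = p$ and $\mathrm{dir}([\gamma_{p,v}]) = v$. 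So the cleanest route is: show $\iota(T_1(X')) = \{[\gamma] \in \lX : \mathrm{bp}[\gamma] \in X'\}$, which holds because any linear approach landing at an interior point $p$ of $X'$ extends to a genuine two-sided geodesic through $p$ and thus has a representative of the standard form; then $(\mathrm{bp},\mathrm{dir}) : \iota(T_1(X')) \to X' \times S^1 = T_1(X')$ is continuous by the Corollary and is a two-sided inverse to $\iota$, so $\iota$ is a homeomorphism onto its image.

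The main obstacle I anticipate is the openness/inverse-continuity step, specifically being careful that a linear approach $[\gamma]$ with $\mathrm{bp}[\gamma] = p \in X'$ really does have a representative that is a straight segment \emph{symmetric about $p$} — one must use that $p$ is an interior point of $X'$ (so a genuine Euclidean disc around $p$ sits inside $X'$) to extend $\gamma$ backward through $p$, which is exactly where the hypothesis $x \in X'$ rather than $x \in \widehat{X}$ is essential; this is the only place the argument could go wrong, and it is handled by the local Euclidean structure on $X'$. Everything else is a routine packaging of the already-established continuity of $\mathrm{bp}$ and $\mathrm{dir}$ together with Lemma~\ref{L:Beps}.
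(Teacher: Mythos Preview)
Your reparametrization introduces a genuine error. With $\iota(p,v)$ defined as the class of $t \mapsto p + (t-\eps/2)v$ on $(0,\eps)$, the basepoint $\lim_{t\to 0}$ of this trajectory is $p - (\eps/2)v$, not $p$. Consequently, shrinking $\eps$ to $\eps'$ does \emph{not} give an equivalent trajectory: at time $t$ the two candidates differ by the constant vector $\tfrac{\eps'-\eps}{2}\,v$, so they do not agree on $(0,\eps')$ and hence define distinct classes in $\lX$. Your map $\iota$ is therefore not well-defined. The same mistake resurfaces when you invoke $\mathrm{bp}(\iota(p,v)) = p$ to identify $(\mathrm{bp},\mathrm{dir})$ as the inverse of $\iota$: with your shifted parametrization this identity is false.

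The fix is simply to drop the shift and use the map you wrote down first, $\gamma_{p,v}(t) = p + tv$ on $(0,\eps)$. There is no need to have $p$ lie in the image of the trajectory; the basepoint is defined as a limit, and the worry you voice at the end about ``extending backward through $p$'' is irrelevant, since for the image characterization $\iota(T_1(X')) = \mathrm{bp}^{-1}(X')$ one only needs that a geodesic with basepoint $p\in X'$ is, for small $t$, a straight segment $p + tv$ in a flat chart --- which is immediate. With this correction, your argument is essentially the paper's: both use the continuity of $\mathrm{bp}\times\mathrm{dir}$ to produce a continuous inverse on the image. Your continuity argument for $\iota$ itself (factoring locally through a single $\leX$ and using the uniform metric there) is a clean alternative to the paper's route via the subbasis $\tilde{B}(x,r)^t$, but the two are equivalent in substance.
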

\begin{proof}
We denote the unit tangent bundle of $X'$ by $T^1(X')$. Given that 
$X'$ is a translation surface, $T^1(X')=X'\times S^1$. For every 
$(x,\theta)\in T^1(X')\times S^1$ let $i(x,\theta):=[\gamma]\in\lX$ 
be such that $\mathrm{bp}(\gamma)=x$ and 
$\mathrm{dir}(\gamma) = \theta$. Injectivity for $i$ follows from 
the fact that $x$ is a flat point. We now show that $i$ is a 
topological embedding.

Given that $X'\times S^1\hookrightarrow\widehat{X}\times S^1$ is a 
topological embedding and $\mathrm{bp}\times\mathrm{dir}:
\lX\to\widehat{X}\times S^1$ is continuous, the product topology of 
$X'\times S^1$ is contained in the subspace topology of 
$i(X'\times S^1)$. Hence it is sufficient to show that if $x\in X$, 
$t,r>0$ and $[\gamma]\in\tilde{B}(x,r)^t\cap i(T^1(X'))$ are fixed, 
then there exists an open set $U\times V\subset X'\times  S^1$ such 
that:
\[
[\gamma]\in i(U\times V)\subset \tilde{B}(x,r)^t\cap i(T_1(X')).
\]
Since $\lim_{t\to 0}\gamma(t)=x$ is a flat point, we can cover 
$\{x\} \cup \{\gamma(s) \mid s\in(0,t+\delta)\}$, for small 
$\delta>0$, with finitely many $d_x$-balls of fixed radius $r'>0$ 
whose union lies in $X'$. The existence of $U\times V$ follows 
from the fact that $r'>0$ can be chosen arbitrary small.
\end{proof}

\section{Affine maps and rotational components}\label{S:affine}

In the preceding sections we saw how the space $\lX$ generalizes the 
notion of unit tangent bundle. In this section we describe first how 
an affine map between two translation surfaces $X$ and $Y$ induces a 
continuous map from $\lX$ to $\mathcal{L}(Y)$ that generalizes the 
(normalized) derivative. Then we explore some basic facts about 
rotational components. We conclude with a new characterization 
of the pre-compact translation surfaces.

\subsection{Affine maps} Since $\C\cong\R^2$, every translation 
surface $(X,\omega)$ has a canonical \emph{real-affine} translation 
structure.

\begin{definition}
Let $(X,\omega)$ and $(Y,\eta)$ be translation surfaces. An open map 
$f:X\to Y$ is called affine if and only if $f:X'\to Y'$ is affine in 
real-affine charts. That is, in local translation coordinates, 
$f(x,y)=A\cdot(x,y)+(x_0,y_0)$ for some $A \in \mathrm{GL}(2,\R)$. 
\end{definition}

The constant $(x_0,y_0)$ depends on the local coordinates, but the 
differential $A\in\mathrm{GL}(2,\R)$ does not, since all transition 
functions involved are translations. We denote by $\mathrm{Aff}(X,Y)$ 
the set of affine maps from $X$ to $Y$, by $\mathrm{Aff}(X)$ the group 
of affine diffeomorphisms of $X'$ and by $\mathrm{Aff}^+(X)$ the 
subgroup of $\mathrm{Aff}(X)$ formed by orientation preserving maps. 
Remark that every $f\in {\rm Aff}(X,Y)$ has a unique continuous 
extension $\hat{f}:\widehat{X}\to\widehat{Y}$. The action of 
$\mathrm{Aff}(X)$ extends naturally to $\widehat{X}$. Henceforth we 
denote by $\mathrm{Stab}(x)$ the stabilizer of $x\in\widehat{X}$ with 
respect to this action. Recall that the Veech group of $(X,\omega)$ is 
$\Gamma(X):=\{ Df \in \mathrm{GL}(2,\R) \mid f\in\mathrm{Aff}(X)\}$. 
Every countable subgroup of $\mathrm{GL}_+(2,\R)$ without elements of 
norm less than 1 can be realized the Veech group of a translation 
surface having only finite angle singularities, infinite genus and 
one end (see \cite{PSV}).

\subsubsection*{The map $f_*$}
Every $f\in{\rm Aff}(X,Y)$ sends flat points in $X$ to flat points 
in $Y$. For every $\gamma\in\leX$ let $\eps'$ be the total length of 
$f\circ\gamma$. Then there exists a unique unit speed geodesic 
$\eta\in\mathcal{L}^{\eps'}(X)$ parametrizing the image of $f\circ\gamma$ 
and such that $\mathrm{bp}[\eta]=\lim_{t\to 0}f\circ\gamma(t)$. We 
define $f_*:\lX\to\mathcal{L}(Y)$ as $f_*[\gamma]:=[\eta]$.

This definition does not depend on the representative $\gamma$. 
Remark that $(f\circ g)_*=f_*\circ g_*$ and $(\id_X)_*=\id_{\lX}$. 
The following theorem implies that 
$(\mathcal{L}(\,\cdot\,),(\,\cdot\,)_*)$ is a functor from the 
category of translation surfaces satisfying the main hypotheses 
(see \S\ref{S:definitions}) with affine maps to \textbf{Top}, the 
category of topological spaces.

\begin{theorem}
If $f:X\to Y$ is affine, then $f_*:\lX\to\mathcal{L}(Y)$ is 
continuous. 
\end{theorem}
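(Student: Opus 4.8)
The plan is to use the universal property of $\lX$ together with the generating set $\mathcal{B}$ for the topology. Since $f_* \circ \rho_\eps^{\eps'} = $ (the analogous restriction) holds for all $\eps' < \eps$, by the universal property it suffices to prove that for each fixed $\eps > 0$ the composite $f_* \circ \rho_\eps : \mathcal{L}^\eps(X) \to \mathcal{L}(Y)$ is continuous. By Proposition~\ref{P:BxIsBasis} the topology on $\mathcal{L}(Y)$ is generated by sets of the form $\tilde{B}(y,r)^t$ with $y \in Y'$, $r,t > 0$, so it is enough to show that $(f_* \circ \rho_\eps)^{-1}(\tilde{B}(y,r)^t)$ is open in $\mathcal{L}^\eps(X)$.

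The main subtlety is that $f$ does not preserve arc length, so the reparametrization built into the definition of $f_*$ (replacing $f \circ \gamma$ by the unit-speed geodesic $\eta$ with the same image and basepoint) must be controlled. First I would record the elementary but essential fact that if $f$ has differential $A \in \mathrm{GL}(2,\R)$, then along any geodesic segment $f$ scales length by the factor $\|A v\|$ where $v$ is the unit direction of that segment; in particular the reparametrization function $t \mapsto s(t)$ sending $\gamma$-time to $\eta$-time is linear, $s(t) = \|A\,\mathrm{dir}(\gamma)\| \cdot t$, and depends continuously on $\mathrm{dir}(\gamma)$ (which is continuous on $\mathcal{L}^\eps(X)$ by Proposition~\ref{P:bp&dir}). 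Next, given $[\gamma]$ with $f_*[\gamma] = [\eta] \in \tilde{B}(y,r)^t$, so $\eta(t) \in B(y,r)$, I would translate this back: $\eta(t) = f(\gamma(t/\lambda_\gamma))$ where $\lambda_\gamma = \|A\,\mathrm{dir}(\gamma)\|$, and write $\eta(t)$ as the endpoint of the image under $f$ of the $\gamma$-segment of $\gamma$-length $t/\lambda_\gamma$.

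The key step is then a local-continuity estimate at a fixed $[\gamma]$. I would fix $a, b$ with $0 < a < t/\lambda_\gamma < b < \eps$, and as in the proof of Proposition~\ref{P:bp&dir} take a thin tubular neighborhood $U$ of $\gamma([a,b])$ isometric to a capped Euclidean rectangle, chosen small enough that $f$ is affine on $U$ with differential $A$. For $\eta'$ with $d_\eps(\gamma,\eta')$ small, $\eta'([a,b])$ stays in $U$, so $f \circ \eta'$ is a genuine Euclidean segment of slope $\mathrm{dir}(\eta')$; its unit-speed reparametrization $\tilde\eta'$ satisfies $\tilde\eta'(t) = f(\eta'(t/\lambda_{\eta'}))$ with $\lambda_{\eta'}$ close to $\lambda_\gamma$ (continuity of the direction map again). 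Since $\lambda_{\eta'} \to \lambda_\gamma$ the parameter $t/\lambda_{\eta'}$ stays in $(a,b)$, and then $f(\eta'(t/\lambda_{\eta'}))$ is close to $f(\gamma(t/\lambda_\gamma)) = \eta(t)$: this uses that $f$ is $\|A\|$-Lipschitz on $U$, that $\eta'(t/\lambda_{\eta'})$ is close to $\gamma(t/\lambda_\gamma)$ (combining $d_\eps(\gamma,\eta')$ small with the uniform continuity of $\gamma$ on $[a,b]$ to absorb the shift from $t/\lambda_{\eta'}$ to $t/\lambda_\gamma$), and that all of this happens inside the flat region $U$. Hence $\tilde\eta'(t) \in B(y,r)$ for $d_\eps(\gamma,\eta')$ below some $\delta$, i.e. a $d_\eps$-ball around $\gamma$ lies in the preimage, proving openness.

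I expect the main obstacle to be bookkeeping the interplay between the (direction-dependent, hence varying) reparametrization factor $\lambda$ and the uniform metric $d_\eps$: one must be careful that "$d_\eps(\gamma,\eta')$ small" really does force $\eta'$ evaluated at the \emph{shifted} time $t/\lambda_{\eta'}$ to be near $\gamma$ at time $t/\lambda_\gamma$, which is why it is important to work strictly inside a compact subinterval $[a,b]$ of $(0,\eps)$ where $\gamma$ is uniformly continuous, and to shrink $\delta$ after, not before, fixing $[a,b]$. A minor point worth stating explicitly is that the construction of $f_*$ never refers to $\mathrm{Sing}$, so the argument is insensitive to wildness; and that on each $\mathcal{L}^\eps(X)$ one may equally work with the compact-open topology (Lemma~\ref{L:Beps}), which makes the "stays in $U$" and "close on $[a,b]$" statements transparent.
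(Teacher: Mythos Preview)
Your proposal is correct and follows essentially the same route as the paper: reduce via the universal property to continuity of each $f_\eps:\mathcal{L}^\eps(X)\to\mathcal{L}(Y)$, test against the (sub)basic sets $\tilde{B}(y,r)^t$ from Proposition~\ref{P:BxIsBasis}, and control the direction-dependent arc-length scaling (your $\lambda_\gamma=\|A\,\mathrm{dir}(\gamma)\|$ is precisely what the paper encodes in its quasiconformality remark and its factor $K$). If anything, your treatment of the reparametrization is more explicit than the paper's.
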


\begin{proof}  
Define $f_\eps:\leX\to\mathcal{L}(Y)$ as 
$f_\eps(\gamma):=f_*[\gamma]$. By the universal property of 
the direct limit, it is sufficient to prove that for every 
$0<\eps'<\eps$ the map $f_\eps$ is continuous and 
$f_\eps=\rho_\eps^{\eps'}\circ f_{\eps'}$, where 
$\rho_\eps^{\eps'}:\leX\to\mathcal{L}^{\eps'}(X)$ denotes the 
restriction map. The equation 
$f_\eps=\rho_\eps^{\eps'}\circ f_{\eps'}$ is clear from the 
definition of $f_*$. Recall that, by Lemma~\ref{L:BxIsBasis} 
in the preceding section, the sets 
$U=\bigcap_{i=1}^n\tilde{B}(y_i,r_i)^{t_i}$ form a basis for the 
topology of $\mathcal{L}(Y)$. Let $\gamma\in f_\eps^{-1}(U)$, 
$f_\eps(\gamma)=[\eta]$ and $s_i\in\R$ be such that 
$f\circ\gamma(s_i)=\eta(t_i)$, $i=1,\ldots,n$.

For every $\gamma\in\leX$ denote by $|f\circ\gamma|$ the total 
length of the image of $f\circ\gamma$. Since $f$ is affine 
(in particular quasiconformal), for every $r>0$ there exists 
$\rho>0$ such that, for every $\gamma_1$ in a $d_\eps$-neighborhood 
of radius $\rho$ around $\gamma$, one has that 
$|f\circ\gamma_1|=K|f\circ\gamma|$, with $K=K(\gamma_1)$ and 
$|K-1|<r$. Furthermore, by the continuity of $f$, such a $\rho>0$ 
can be chosen so that $d_Y(f\circ\gamma_1(s_i),f\circ\gamma(s_i))<r$ 
for every $i=1,\ldots,n$. Let $[\eta_1]:=f_\eps(\gamma_1)$. Remark 
that $f\circ\gamma_1(s_i)=\eta_1(t_i')$, where $t_i'=Kt_i$. Hence, 
if $r>0$ is small enough, then $\eta_1$ is defined at $t=t_i$ and 
$\eta_1(t_i)\in B(y_i,r_i)$ for every $i=1,\ldots,n$.
\end{proof}

\begin{remark}
Every map $f\in\mathrm{Aff}(X,Y)$ acts on the space of directions 
$S^1$ by its normalized differential. In proposition \ref{P:NatEmbedding}, we proved that there is a natural topological 
emmbeding $i:T_1(X')\hookrightarrow \lX$. Every class 
$[\gamma]\in i(T_1(X'))$ is completely determined by the pair 
$(\mathrm{bp}(\gamma),\mathrm{dir}(\gamma))$. By definition the class of 
$f_*[\gamma]$ is determined by $(\mathrm{bp}(f_*[\gamma]),\frac{Df}{|Df|}(\mathrm{dir}(\gamma))$. In other words, the map $f_*$ is the continuous extension 
to $\lX$ of the normalized derivative of the affine map $f$. Remark 
that the  preceding theorem does not follow from the classical 
extension theorems for continuous maps, since $\lX$ is in general 
not regular. On the other hand, remark that 
$\hat{f}\circ\mathrm{bp}[\gamma]=\mathrm{bp}\circ f_*[\gamma]$ 
for every $[\gamma]\in\lX$.
\end{remark}

\begin{corollary}
If $f : X \to Y$ is an affine homeomorphism, then 
$f_*: \lX \to \mathcal{L}(Y)$ is a homeomorphism.
\end{corollary}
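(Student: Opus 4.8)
The plan is to apply the functoriality of $\mathcal{L}(\,\cdot\,)$ together with the preceding theorem. First I would observe that since $f$ is an affine homeomorphism, its inverse $g := f^{-1} : Y \to X$ is also an affine homeomorphism (in real-affine charts the inverse of an invertible affine map is affine, with differential $(Df)^{-1}$, and $g$ is open because $f$ is a homeomorphism). Hence both $f_* : \mathcal{L}(X) \to \mathcal{L}(Y)$ and $g_* : \mathcal{L}(Y) \to \mathcal{L}(X)$ are defined, and by the preceding theorem both are continuous.

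Next I would invoke the identities $(f \circ g)_* = f_* \circ g_*$ and $(\mathrm{id}_X)_* = \mathrm{id}_{\mathcal{L}(X)}$ recorded just before the theorem. Applying these to $f \circ g = \mathrm{id}_Y$ and $g \circ f = \mathrm{id}_X$ gives $f_* \circ g_* = \mathrm{id}_{\mathcal{L}(Y)}$ and $g_* \circ f_* = \mathrm{id}_{\mathcal{L}(X)}$. Therefore $g_* = (f_*)^{-1}$, so $f_*$ is a continuous bijection with continuous inverse, i.e.\ a homeomorphism. This completes the argument.

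The only point requiring a word of care is the claim that $g = f^{-1}$ is itself an affine map in the sense of the paper's definition — one must check it is open (immediate, since $f$ is a homeomorphism so $g$ is continuous with continuous inverse $f$, hence open) and that $g$ restricts to a real-affine map $Y' \to X'$ (which follows because $f$ maps flat points to flat points and $\mathrm{Sing}$ bijectively, so $f(X') = Y'$, and the local inverse of an affine chart expression is affine). None of this is a genuine obstacle; the substantive content has already been established in the theorem that $f_*$ is continuous for affine $f$. So there is no hard step here: the corollary is a formal consequence of functoriality plus the continuity theorem, and the proof is essentially a two-line categorical argument once the invertibility of $g_*$ is spelled out.
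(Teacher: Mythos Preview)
Your argument is correct and is exactly the intended one: the paper states this corollary without proof because it follows immediately from the functoriality identities $(f\circ g)_*=f_*\circ g_*$, $(\mathrm{id}_X)_*=\mathrm{id}_{\mathcal{L}(X)}$ together with the continuity theorem, applied to $f$ and $f^{-1}$. Your care about $f^{-1}$ being affine is appropriate but, as you note, routine.
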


\begin{remark}
The converse of this statement does not hold by any means. Even in 
the case of surfaces of finite affine type, if $X$ and $Y$ are in 
the same connected component of a stratum, then $\mathcal{L}(X) $ is homeomorphic to
$\mathcal{L}(Y)$, but this homeomorphism is only induced by 
an affine map $X \to Y$ if they lie in the same 
$\Lie{SL}_2(\R)$-orbit.
\end{remark}

\begin{corollary}
There is a canonical injection from $\mathrm{Aff}(X)$ into 
$\mathrm{Homeo}(\lX)$ and from $\mathrm{Stab}(x)$ into 
$\mathrm{Homeo}(\lx)$ for every $x\in\widehat{X}$.
\end{corollary}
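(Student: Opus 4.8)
The plan is to realize both injections through the single assignment $f \mapsto f_*$ (and, in the second case, its restriction), extracting all the group-theoretic content from functoriality and all the topological content from the corollary that an affine homeomorphism induces a homeomorphism of $\lX$. For the first injection I would send $f \in \mathrm{Aff}(X)$ to $f_* : \lX \to \lX$. Since every element of $\mathrm{Aff}(X)$ is in particular an affine homeomorphism of $X'$, that corollary places each $f_*$ in $\mathrm{Homeo}(\lX)$, and the relations $(f\circ g)_* = f_*\circ g_*$ and $(\id_X)_* = \id_{\lX}$ recorded above show the assignment is a group homomorphism. To prove it is injective, suppose $f_* = \id_{\lX}$. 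Using the identity $\hat f\circ\mathrm{bp} = \mathrm{bp}\circ f_*$ from the remark, every class satisfies $\hat f(\mathrm{bp}[\gamma]) = \mathrm{bp}[\gamma]$; since the basepoints of the classes in $i(T^1(X'))$ from Proposition~\ref{P:NatEmbedding} exhaust $X'$, the map $\hat f$ fixes $X'$ pointwise, whence $f = \id_X$. So the kernel is trivial.

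For the second injection I would first check that $f_*$ preserves $\lx$ whenever $f \in \mathrm{Stab}(x)$: if $\mathrm{bp}[\gamma] = x$ then $\mathrm{bp}(f_*[\gamma]) = \hat f(x) = x$, so $f_*(\lx)\subseteq\lx$, and applying the same to $f\inv \in \mathrm{Stab}(x)$ together with $(f_*)\inv = (f\inv)_*$ shows that $f_*|_{\lx}$ is a homeomorphism of $\lx$ in the subspace topology. Restricting the homomorphism above then gives a group homomorphism $\mathrm{Stab}(x) \to \mathrm{Homeo}(\lx)$.

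The hard part is injectivity of this restricted homomorphism, since acting trivially on $\lx$ is a priori weaker than acting trivially on $\lX$. I would argue as follows: set $h := g\inv f$ and assume $h_*|_{\lx} = \id_{\lx}$, so that $h$ fixes every linear approach to $x$. By definition of the equivalence $\sim$, fixing a class $[\gamma]$ means $h\circ\gamma$ and $\gamma$ agree on a common subinterval, i.e.\ $h$ fixes an initial arc of the geodesic $\gamma$ pointwise; reading this in a translation chart forces the (globally constant) differential $Dh$ to have the direction of $\gamma$ as an eigenvector of eigenvalue $1$. If $\lx$ contains a rotational component with more than one element, the associated angular sector $i_c(U)$ is a two-dimensional region swept out by a family of such fixed geodesics, so $h$ fixes an open subset of $X'$ pointwise; then $Dh = \id$, the translation part vanishes because $h$ fixes points, and $h = \id_X$. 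The same conclusion follows whenever $\lx$ contains two linear approaches with non-parallel directions, since $Dh$ must then fix two independent vectors.

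The one obstacle I expect to remain is the degenerate possibility that $\lx$ consists entirely of single-point rotational components all sharing a single unoriented direction, for there the differential argument yields only that $Dh$ fixes one line and cannot be upgraded to $Dh = \id$. My plan is to rule this case out by establishing that an isolated singularity always admits linear approaches in at least two independent directions, so that the image of $\mathrm{dir}$ on $\lx$ is never contained in a single line of $S^1$; this geometric input, rather than the formal functoriality, is where the real content of injectivity lies. Granting it, the differential argument closes the second injection and completes the proof of the corollary.
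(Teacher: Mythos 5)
Your first injection is correct: functoriality, the corollary that affine homeomorphisms induce homeomorphisms of $\lX$, and the identity $\hat f\circ\mathrm{bp}=\mathrm{bp}\circ f_*$ give exactly the intended argument (the paper states this corollary without proof, as an immediate consequence of these facts, so for $\mathrm{Aff}(X)\hookrightarrow\mathrm{Homeo}(\lX)$ you have supplied the proof the authors had in mind). The second injection, however, contains a genuine error. You assert that $h_*[\gamma]=[\gamma]$ means ``$h\circ\gamma$ and $\gamma$ agree on a common subinterval, i.e.\ $h$ fixes an initial arc of $\gamma$ pointwise,'' and deduce that $\mathrm{dir}(\gamma)$ is an eigenvector of $Dh$ with eigenvalue $1$. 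But $f_*$ is defined by passing to the \emph{unit-speed reparametrization} of $f\circ\gamma$: the class $h_*[\gamma]$ is represented by the unit-speed geodesic with the same image and the same basepoint as $h\circ\gamma$. Hence $h_*[\gamma]=[\gamma]$ says only that $h$ maps the ray of $\gamma$ into itself fixing its basepoint, i.e.\ that $\mathrm{dir}(\gamma)$ is an eigendirection of $Dh$ with some \emph{positive} eigenvalue, not eigenvalue $1$. (Incidentally, your worried degenerate case never occurs: by Lemma~\ref{L:nonempty} the directions of $\lx$ always fill an arc of length at least $\pi$, so your argument, corrected, does yield $Dh=\lambda\,\id$ for some $\lambda>0$ --- but nothing forces $\lambda=1$, and the translation part need not vanish since $h$ no longer fixes any point.)

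This gap is not repairable by your plan, because with the paper's definition of $f_*$ the restricted map $\mathrm{Stab}(x)\to\mathrm{Homeo}(\lx)$ can actually fail to be injective. Let $X$ be the infinite cyclic cover of the punctured plane, i.e.\ $(\C, e^w\,\D{w})$, whose completion adds a single infinite-angle singularity $x$, so $\mathrm{Stab}(x)=\mathrm{Aff}(X)$. The homothety $h$ given in translation charts by $z\mapsto\lambda z$ (that is, $w\mapsto w+\log\lambda$), $\lambda>1$, is an affine diffeomorphism with $Dh=\lambda\,\id$. Every linear approach to $x$ is a radial ray, and $h$ maps each such ray into itself fixing the basepoint $x$; after unit-speed reparametrization this gives $h_*|_{\lx}=\id_{\lx}$ although $h\neq\id$. (Your first injection is untouched: this $h$ moves every basepoint in $X'$, so $h_*\neq\id_{\lX}$.) What \emph{would} certify $\lambda=1$ is metric data beyond the homeomorphism type of $\lx$: since $\ell(h_*[\gamma])=|Dh\cdot\mathrm{dir}(\gamma)|\,\ell[\gamma]$, the existence of a single linear approach to $x$ of finite maximal length (e.g.\ a short saddle connection at a wild singularity) forces $\lambda=1$; but such approaches need not exist, as the infinite-angle case shows. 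So the second half of the corollary is, as stated, false without further hypotheses, and your instinct that this is ``where the real content lies'' is correct --- the content is simply stronger than any argument of the proposed form can deliver.
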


\subsection{Action of $f_*$ on rotational components} 

Recall from the preceding section that the restriction to $f_*$ to 
$T^1(X')$ is given by the \emph{normalized differential} 
$\frac{Df}{|Df|}$. In particular, the action of $f_*$ on a rotational 
component of $\lx$, with $x\in X'$, is given by $\frac{Df}{|Df|}$ as 
well. This situation is not proper to flat points.

\begin{lemma}
Let $x\in\widehat{X}\setminus X'$, $\overline{[\gamma]}$ be a 
rotational component in $\lx$ and $\overline{[\eta]}$ its image 
under $f_{*|}:\lx\to\mathcal{L}(y)$. Denote by 
$A_f:=\frac{Df}{|Df|}$. Then the following diagram commutes:
\begin{equation}
  \label{D:comm}
\xymatrix{
\overline{[\gamma]} \ar[d]^{\mathrm{dir}} \ar[r]^{f_{*|}} &\overline{[\eta]}\ar[d]^{\mathrm{dir}}\\
S^1 \ar[r]^{A_f}          & S^1}
\end{equation}
\end{lemma}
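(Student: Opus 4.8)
The plan is to verify the square \eqref{D:comm} pointwise, at each $[\gamma']\in\overline{[\gamma]}$. For an individual linear approach the commutativity is just the chain rule, so the real work is to make sense of the right-hand vertical arrow, i.e.\ to show that $f_{*|}$ carries the whole rotational component $\overline{[\gamma]}$ into a single rotational component $\overline{[\eta]}$ of $\mathcal{L}(y)$, with $y=\hat f(x)$.

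First I would dispatch the commutativity. Given $[\gamma']\in\overline{[\gamma]}$ with representative $\gamma'$, the class $f_*[\gamma']$ is by definition the unit-speed reparametrization of $f\circ\gamma'$; since $f$ is affine, its differential $Df\in\mathrm{GL}(2,\R)$ is constant, so the velocity of $f\circ\gamma'$ is $Df\cdot\dot\gamma'$, and normalizing gives $\mathrm{dir}(f_*[\gamma'])=Df\cdot\dot\gamma'/|Df\cdot\dot\gamma'|=A_f(\mathrm{dir}[\gamma'])$. As the remark preceding this lemma points out, this is already known when $x$ is a flat point, and the same computation applies verbatim when $x\notin X'$; so once the two vertical maps are known to be well defined, \eqref{D:comm} commutes.

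The substantive point is that $f_{*|}(\overline{[\gamma]})$ lies inside one rotational component. Rotational equivalence is generated by the relation ``two linear approaches lie in a common angular sector,'' so it suffices to check that $f_*$ sends a pair with this property to a pair with this property; passing to the transitive closure then places $f_{*|}(\overline{[\gamma]})$ in a single rotational component $\overline{[\eta]}$. So suppose $(I,c,i_c)$ is an angular sector realizing $[\gamma'_1]\sim[\gamma'_2]$, with $[\gamma'_j]$ represented by the ray $(x<c,\ y_j)$. In translation charts on $Y'$, $f\circ i_c$ has the form $(x,y)\mapsto A\,e^{x+iy}+\mathrm{const}$ with $A=Df$ acting $\R$-linearly on $\C\cong\R^2$ (since $i_c$ develops as $z\mapsto e^{z}$ and $f$ as an affine map of the chart). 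If $A$ were conformal this would again be $z\mapsto e^{z}+\mathrm{const}$ up to a translation of $z$ — an angular sector on the nose — so the only issue is the distortion of a non-conformal $A$. Setting $v(y):=A\,e^{iy}\in\C\setminus\{0\}$ and $\psi(y):=\arg v(y)$, one has $A\,e^{x+iy}=e^{\,x+\ln|v(y)|}\,e^{\,i\psi(y)}$; because $A\in\mathrm{GL}(2,\R)$, $\psi\colon\R\to\R$ is a homeomorphism with $\psi(y+2\pi)=\psi(y)\pm2\pi$, so $(x,y)\mapsto(\tilde x,\tilde y):=(x+\ln|v(y)|,\ \psi(y))$ is a homeomorphism onto its image, in which $f\circ i_c$ reads $(\tilde x,\tilde y)\mapsto e^{\tilde x+i\tilde y}+\mathrm{const}$. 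Since $y\mapsto|v(y)|$ is continuous, positive, and $2\pi$-periodic, it is bounded below by some $\mu>0$, so the image of $U(I,c)$ contains $U(\psi(I),\ c')$ with $c':=c+\ln\mu$; restricting there produces an angular sector $(\psi(I),c',i'_{c'})$ in $Y'$ (injectivity inherited from $i_c$ and $f$) in which $\gamma'_1,\gamma'_2$ become the rays $(\tilde x<c',\ \psi(y_j))$. Hence $f_*[\gamma'_1]\sim f_*[\gamma'_2]$.

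The main obstacle is this last step: the push-forward of an angular sector is \emph{not} literally an angular sector once $A$ fails to be a similarity, and one must reabsorb the distortion through the reparametrization $y\mapsto\psi(y)=\arg(A e^{iy})$ of the angular coordinate together with the $y$-independent radial cut-off $c'$. Note that this is consistent with the square: the direction of the ray $\psi(y_j)$ in the new sector is $e^{i\psi(y_j)}=A_f(e^{iy_j})=A_f(\mathrm{dir}[\gamma'_j])$. Everything else — well-definedness of $f_*$, the chain rule, injectivity and continuity — is routine or already available from \S\ref{S:affine}.
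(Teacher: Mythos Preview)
Your verification that $\mathrm{dir}(f_*[\gamma'])=A_f(\mathrm{dir}[\gamma'])$ for each individual $[\gamma']$ is essentially what the paper does: pass to translation charts on $X'$ and $Y'$ covering the images of $\gamma'$ and $f\circ\gamma'$, translate so that the action of $f$ on $\gamma'$ becomes linear, and read off the direction as the normalized image under $Df$. That pointwise check is the \emph{entire} content of the paper's proof of this lemma.

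You go beyond the paper by explicitly arguing that $f_*(\overline{[\gamma]})$ lands in a single rotational component, so that the top arrow and the right vertical arrow of \eqref{D:comm} are well defined. The paper does not address this; it simply refers to ``$\overline{[\eta]}$ its image'' and proceeds. Your push-forward construction for angular sectors---the reparametrization $\tilde y=\psi(y)=\arg(Ae^{iy})$, $\tilde x=x+\ln|Ae^{iy}|$, then trimming to $U(\psi(I),c')$ with $c'=c+\ln\mu$---is correct and supplies a clean reason: in the new coordinates $f\circ i_c$ reads $e^{\tilde z}+\text{const}$ in translation charts on $Y'$, which is exactly the form required of an angular sector. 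One small caveat: when you say injectivity of $i'_{c'}$ is ``inherited from $i_c$ and $f$'', you are tacitly assuming $f$ is injective on $i_c(U(I,c))$. That is automatic when $f$ is an affine homeomorphism (the case driving all the applications in \S\ref{S:affine}), but for a general open affine map one must subdivide the interval $[y_1,y_2]\subset I$ into finitely many pieces small enough that $f$ is injective on each corresponding sector, and then chain the resulting equivalences; since $f$ is a local diffeomorphism and $[y_1,y_2]$ is compact, this can always be done.
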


\begin{proof}
Let $[\gamma]\in\overline{[\gamma]}$. If $\eps>0$ is small enough 
there exists a representantive $\gamma:(0,\eps)\to X'$ and flat 
charts $(U,\varphi)$ and $(V,\psi)$ of the image of $\gamma$ and 
$f\circ\gamma$ in $X'$ and $Y'$ respectively, for we are assuming 
that the set of singularities of any translation surface is discrete. 
Without loss of generality we can suppose that 
$0=\lim_{t\to 0}\varphi(\gamma(t))=\lim_{t\to 0}\psi(f(\gamma(t)))$. 
That is, up to composing with a translation, the action in local 
coordinates of $f$ on $\gamma$ is linear. Since the differential of 
$f$ does not depend on local coordinates and we can rescale both 
vectors without changing the direction, we obtain the commutativity 
of \eqref{D:comm}.
\end{proof}

Let $[\gamma_0]\in\overline{[\gamma]}$ and 
$f_{*|}[\gamma_0]=[\eta_0]$. Denote by 
$\alpha_0=\mathrm{dir}[\gamma_0]$, $\beta_0=\mathrm{dir}[\eta_0]$ 
and by $\exp:\R\to S^1$  the universal covering map. Choose 
$t_0\in \exp^{-1}(\alpha_0)$ and $s_0\in \exp^{-1}(\beta_0)$. There 
is a unique lift $\widetilde{A_f}:\R\to\R$ of $A_f:S^1\to S^1$ 
sending $t_0$ to $s_0$. Moreover, there is a unique translation 
embedding $i_0:\overline{[\gamma]}\hookrightarrow\R$ such that 
$i_0([\gamma_0])=t_0$ and making the following diagram commute:
\begin{equation}
  \label{D:comm1}
\xymatrix{
\overline{[\gamma]} \ar[d]^{\mathrm{dir}} \ar@{^{(}->}[r]^{i_0} &\R\ar[d]^{\exp}\\
S^1 \ar[r]^{Id}          & S^1}
\end{equation}
The same is valid for a translation embedding 
$j_0:\overline{[\eta]}\hookrightarrow\R$ satisfying 
$j_0([\eta_0])=s_0$. Hence, if we think of $\R$ as local 
coordinates for the rotational components $\overline{[\gamma]}$ 
and $\overline{[\eta]}$, the action of $f_*$ on a rotational 
component is described globally by the following equation:
\begin{equation}\label{E:describe}
f_{*}\vert_{\overline{[\gamma]}} 
= (j_0^{-1}\circ\widetilde{A_f}\circ i_0)
\end{equation}

\begin{definition}
We call an area-preserving affine automorphism of a flat surface 
parabolic, elliptic, or hyperbolic according to whether the image 
of $Df$ in $\mathrm{PSL}(2,\R)$ is parabolic, elliptic or hyperbolic, 
respectively. 
\end{definition}

\begin{definition}
Let $\overline{[\gamma]}$ be a rotational component with non-empty 
boundary. We call $\alpha\in S^1$ a limit direction of the rotational 
component if it is the limit of the map 
$\mathrm{dir_|}:\overline{[\gamma]}\to S^1$ as one approaches the 
boundary point of $\overline{[\gamma]}$.
\end{definition}

\begin{proposition}
Suppose that the singular locus $\mathrm{Sing}(X)$ of $\widehat{X}$ 
is finite and that there exists a rotational component 
$\overline{[\gamma]}$ of finite length $\lambda$.
\begin{enumerate}
\item If $f\in \mathrm{Aff}(X)$ is parabolic and 
$\lambda \not\equiv 0 \pmod{\pi}$, or 
\item If $f\in {\rm Aff}(X)$ is hyperbolic and the limit directions of $\overline{[\gamma]}$ are not invariant under $A_f$, or 
\item If $f\in {\rm Aff}(X)$ is elliptic but its image in 
$\mathrm{PSL}(2,\R)$ is not conjugated to a torsion element, 
\end{enumerate}
then there exists $x_0\in \mathrm{Sing}(X)$ such that 
$\mathcal{L}(x_0)$ has an infinite number of finite length 
rotational components.  
\end{proposition}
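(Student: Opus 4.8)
The plan is to exploit the commuting description of $f_*$ on a rotational component given by equation~\eqref{E:describe}, namely $f_{*}\vert_{\overline{[\gamma]}} = j_0^{-1}\circ\widetilde{A_f}\circ i_0$, together with the observation that $f_*$ preserves the property of being a finite-length rotational component (since $f$ is quasiconformal, images of finite-length geodesics approaching $x$ are finite-length geodesics approaching $\hat f(x)$, and $f_*$ is a bijection on $\mathcal{L}(x)\to\mathcal{L}(\hat f(x))$ by the corollary on affine homeomorphisms). Because $\mathrm{Sing}(X)$ is finite, the orbit of $x$ under $\mathrm{Aff}(X)$ is finite, so some power $f^m$ stabilizes $x$; replacing $f$ by $f^m$ (which is again parabolic, hyperbolic, or elliptic of the same type, up to the hypotheses), we may assume $f\in\mathrm{Stab}(x)$, hence $f_*$ maps $\mathcal{L}(x)$ to itself and permutes its finite-length rotational components. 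I would then argue by contradiction: if every $\mathcal{L}(x_0)$, $x_0\in\mathrm{Sing}(X)$, had only finitely many finite-length rotational components, then $f_*$ would permute a finite set, so some further power $f^N$ would fix the rotational component $\overline{[\gamma]}$ setwise.

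The heart of the argument is then purely one-dimensional. Once $f_*$ maps the finite-length rotational component $\overline{[\gamma]}\cong(0,\lambda)$ (or a half-open/closed interval of length $\lambda$) to itself, equation~\eqref{E:describe} says this self-map is a translation $i_0^{-1}\circ(\,\widetilde{A_f}-\text{const}\,)\circ i_0$ composed from a lift $\widetilde{A_f}$ of $A_f$ acting on $S^1=\R/2\pi\Z$. A self-isometry of an interval of length $\lambda<\infty$ is either the identity or the unique orientation-reversing flip; since $f_*$ is orientation-preserving on directions when $\det Df>0$ and the development is via the exponential map, the relevant self-maps here are translations, so the only self-isometry of a bounded open interval is the identity. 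Thus $f_*$ fixes $\overline{[\gamma]}$ pointwise, which via~\eqref{D:comm} forces $A_f$ to fix every direction in the arc $\mathrm{dir}(\overline{[\gamma]})\subset S^1$, an arc of length $\lambda$. In case~(1), $A_f$ parabolic fixing an arc of positive length forces $A_f=\id$ unless $\lambda\equiv 0\pmod\pi$ (a parabolic fixes exactly one point of $\mathbb{RP}^1$, hence at most a ``half-turn'' worth of directions), contradicting the hypothesis. In case~(2), $A_f$ hyperbolic fixes exactly two points of $\mathbb{RP}^1$, i.e.\ (up to sign) at most the two limit directions of $\overline{[\gamma]}$; if those limit directions are not $A_f$-invariant we again get a contradiction. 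In case~(3), $A_f$ elliptic acts on $S^1$ by an irrational rotation precisely when its image in $\mathrm{PSL}(2,\R)$ is not conjugate to a torsion element, and an irrational rotation fixes no arc of positive length—contradiction.

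The main obstacle is bookkeeping about \emph{lifts and orientations}: one must track carefully how the lift $\widetilde{A_f}:\R\to\R$ of $A_f$ interacts with the translation charts $i_0,j_0$ of the rotational components, and in particular verify that when $\overline{[\gamma]}$ is mapped into itself the induced interval map is genuinely an orientation-preserving isometry (a translation) and not a flip—this is where the hypothesis $\det Df>0$ (area-preserving, hence orientation-preserving) is used, and where one must be careful that passing to the power $f^N$ does not introduce an orientation reversal. A secondary subtlety is the boundary behavior: the limit-direction hypothesis in case~(2) must be phrased so that it survives passing to powers (but $A_{f^N}=A_f^N$ has the same fixed points on $\mathbb{RP}^1$ as $A_f$, so this is automatic), and one should note that in cases (1) and (3) there are no limit-direction constraints because parabolic and irrational-elliptic elements cannot pin down the boundary of a finite-length component consistently. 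Once these orientation and lifting issues are settled, each of the three cases reduces to the elementary fact that a nontrivial parabolic, hyperbolic, or irrational-rotation element of $\mathrm{PSL}(2,\R)$ cannot fix a subarc of $S^1$ of length $\lambda$ subject to the stated exceptions, completing the contradiction and hence the proof.
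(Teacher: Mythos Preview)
Your overall architecture---pass to a power so that $f_*$ stabilizes $\overline{[\gamma]}$, then derive a contradiction from the one-dimensional self-map---matches the paper's, but the core step is wrong. You assert that the induced self-map of $\overline{[\gamma]}$ is a translation (isometry) of the interval, and therefore the identity. This is false. Equation~\eqref{E:describe} says $f_*\vert_{\overline{[\gamma]}}=j_0^{-1}\circ\widetilde{A_f}\circ i_0$, where $i_0,j_0$ are isometric embeddings but $\widetilde{A_f}$ is a lift of the \emph{projective} action $A_f=\dfrac{Df}{|Df|}$ on $S^1$. For parabolic or hyperbolic $Df$ this projective action is not an isometry of $S^1$, so $\widetilde{A_f}$ is a diffeomorphism of $\R$ that is \emph{not} a translation, and the composite is not an isometry of the interval. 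Consequently your conclusion that ``$f_*$ fixes $\overline{[\gamma]}$ pointwise, hence $A_f$ fixes an arc of $S^1$'' does not follow, and your subsequent case analysis (which tries to reconcile ``$A_f$ fixes an arc'' with the hypothesis $\lambda\not\equiv 0\pmod\pi$) is already off the rails---a nontrivial parabolic fixes no arc of positive length whatsoever, so your argument would not single out the exceptional case $\lambda\in\pi\Z$.

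The paper exploits exactly the fact you overlooked: because $\widetilde{A_f}$ is \emph{not} length-preserving, the requirement that it send the length-$\lambda$ interval $i_0(\overline{[\gamma]})$ onto the length-$\lambda$ interval $j_0(\overline{[\gamma]})$ is a genuine constraint. For a parabolic, the fixed points of $\widetilde{A_f}$ form a lattice $\pi\Z+t$, and an interval can be carried to one of the same length only when its endpoints lie in that lattice, forcing $\lambda\in\pi\Z$. For a hyperbolic, the fixed-point set is the union of two such lattices, and the constraint forces the limit directions to be $A_f$-invariant. For an irrational elliptic, no power of $A_f$ has fixed directions at all, so the limit directions cannot be preserved. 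In each case the contradiction comes from the \emph{failure} of $\widetilde{A_f}$ to be an isometry, not from it being one.
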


\begin{proof}
We proceed by contradiction. Without loss of generality we can suppose that there is a point $x\in \mathrm{Sing}(X)$ such that the rotational component $\overline{[\gamma]}\in\lx$ is fixed by $f_{*|}$. If $f$ is parabolic,  $\widetilde{A_f}:\R\to\R$ is a map whose fixed points form a lattice of the form $\pi\Z+t$, for some $t\in\R$. In particular, it does not preserve the length of any subinterval $I$ whose endpoints are not in the lattice, which is always the case if $\lambda\not\equiv 0\hspace{1mm}{\rm mod(\pi)}$. If $f$ is hyperbolic, the fixed points of $\widetilde{A_f}$ 
form two lattices $\pi\Z+t$, $\pi\Z+s$ for some real numbers $s\neq t$. In particular it does not preserve the length of any subinterval $I$ whose endpoints are not in the union of this two latices. Such is the case if the limit directions of $\overline{[\gamma]}$ are not invariant under $A_f$. If $f$ is elliptic but its image in ${\rm PSL(2,\R)}$ is not conjugated to a torsion element, then no power of $A_f$ fixes a direction in $S^1$. In particular, it cannot fix the limiting directions of $\overline{[\gamma]}$.
\end{proof}

\subsection{Rotational components} 
In this subsection we state and prove some basic facts about 
rotational components. Then we provide a method to detect when a 
linear approach is in the boundary of the rotational component it 
defines and we discuss transverse measures on subsets of $\lX$. 
Finally, we present a characterization for pre-compact translation 
surfaces in the language developed in this article.
Through the examples in \S\ref{SS:examples}, we showed the existence 
of rotational components isometric to $\R$, open intervals, and 
points. In fact, it is not difficult to combine ideas from these 
examples to realize any connected subset of the real line as a 
rotational component. We can detect when a linear approach is in 
the boundary of the rotational component it defines. For this we 
introduce the continuous function $r:X'\to\R^+\cup\infty$ defined 
by
\begin{equation}\label{E:rfunction}
r(x) := \sup\{r>0\hspace{1mm}|\hspace{1mm}B(x,r)\subset X'\} 
= \mathrm{dist}(x,\mathrm{Sing}(X)).
\end{equation}
That is, $r(x)$ is the largest radius of a disk immersed in $X'$ and 
centered at $x$. 
\begin{lemma}\label{L:nonempty}
For every $x\in\widehat{X}$, there exists a rotational component in 
$\lx$ without empty interior. 
\end{lemma}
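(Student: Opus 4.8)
The plan is to locate, near $x$, a distance-minimizing geodesic ending at $x$ along which the function $r$ of \eqref{E:rfunction} grows exactly linearly, and then to thicken it to an embedded Euclidean wedge with apex $x$.

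\emph{Reduction and a good geodesic.} If $x\in X'$ (then $\lx\cong S^1$), or $x$ is a finite- or infinite-angle singularity (covered by the remark following Remark~\ref{R:rotcom}), then $\lx$ is a single rotational component, which has nonempty interior, so I may assume $x$ is wild. Since $\mathrm{Sing}(X)$ is discrete in $\widehat{X}$, first fix $\eps_0>0$ such that the $2\eps_0$-ball about $x$ in $\widehat{X}$ meets $\mathrm{Sing}(X)$ only at $x$, and choose $p\in X'$ with $0<d_X(p,x)<\eps_0$. Every singularity other than $x$ is then at distance $>\eps_0$ from $p$, while $r(p)=\mathrm{dist}(p,\mathrm{Sing}(X))\le d_X(p,x)<\eps_0$; hence the nearest singularity to $p$ is $x$ and $r(p)=d_X(p,x)$. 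Developing the immersed disk $B(p,r(p))\subset X'$ and following the radius that reaches $x$ yields a unit-speed distance-minimizing geodesic $\gamma\colon(0,r(p)]\to X'$ with $\gamma(r(p))=p$ and $\lim_{t\to0}\gamma(t)=x$; thus $[\gamma]\in\lx\neq\emptyset$. Minimality gives $d_X(\gamma(t),x)=t$ for $0<t\le r(p)$, and as $t<\eps_0$ every singularity other than $x$ lies at distance $>\eps_0>t$ from $\gamma(t)$; therefore
\[
r(\gamma(t))=\mathrm{dist}(\gamma(t),\mathrm{Sing}(X))=d_X(\gamma(t),x)=t,
\]
so the immersed Euclidean disk $B(\gamma(t),t)\subset X'$ has $x$ on its metric boundary, for every small $t$.

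\emph{Thickening to an angular sector.} Next I fix a small $t_0\le r(p)$ and develop $B(\gamma(t_0),t_0)$ isometrically onto $D(0,t_0)\subset\C$, with $\gamma(t_0)\mapsto0$ and $\gamma$ pointing toward $x$ along the positive real axis; write $E\colon D(0,t_0)\to X'$ for this surjective local isometry, so that $\gamma(t)\mapsto t_0-t$ and $x$ is carried to the boundary point $v:=t_0$. For any $\phi\in(0,\pi/2)$ the circular sector $C_\phi=\{\rho e^{i\psi}:0<\rho<t_0,\ |\psi|<\phi\}$ is convex and contains $[0,v)$; using the bound $d_X(E(z),E(z'))\le|z-z'|$ on the convex set $C_\phi$ together with completeness of $\widehat{X}$, one checks that $E|_{C_\phi}$ extends continuously to $C_\phi\cup\{v\}$ with $v\mapsto x$. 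Consequently, for $\delta>0$ small the wedge $W_{\delta,\phi'}:=\{\,v-\rho e^{i\psi}:0<\rho<\delta,\ |\psi|<\phi'\,\}$ with apex $v$ lies in some $C_\phi$; identifying $W_{\delta,\phi'}$ (with the metric $|\D{w}|$) isometrically with $U(I,c)$ for $c=\log\delta$ and $I$ an interval of length $2\phi'$ carrying the $e^z\,\D{z}$ structure, and composing with $E$, I obtain a local isometry $i_c\colon U(I,c)\to X'$ whose image is open in the $\delta$-ball about $x$ and which sends each line $\{\Im z=y\}$ to a geodesic limiting to $x$. Thus $(I,c,i_c)$ is an angular sector, the linear approaches $\{[\gamma_y]\}_{y\in I}$ defined by these lines are mutually equivalent, and $\overline{[\gamma]}$ contains more than one element; by the lemma asserting that such a component carries a connected $1$-manifold translation structure, $\overline{[\gamma]}$ is $1$-dimensional. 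Finally, since $i_c(U(I,c))$ is open in $X'$, any linear approach whose representative passes sufficiently near a ray $\gamma_y$ with $y\in\mathring I$ must enter this open Euclidean wedge and, limiting to its apex $x$, agree near $0$ with a radial ray, hence lie in $\overline{[\gamma]}$; this exhibits an open subset of $\lx$ contained in $\overline{[\gamma]}$, so $\overline{[\gamma]}$ has nonempty interior.

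\emph{The main obstacle.} The hard part is verifying that $i_c$ is a genuine isometry \emph{into} $X'$, i.e.\ that $E$ is injective on the wedge $W_{\delta,\phi'}$: a priori this can fail, since a wild $x$ may have geodesic loops of arbitrarily small length nearby (as on Chamanara's surface, Example~\ref{Ex:chamanara}). This is exactly where $r(\gamma(t))=t$ is decisive: because $\gamma$ is distance-minimizing, any arc of $\gamma$ meeting $B(\gamma(t),t)$ is $\gamma(t')$ with $|t'-t|<t$, so $\gamma\cap B(\gamma(t),t)$ develops to a single diameter of the disk; a wedge of angular width $\phi'<1$ meets each $B(\gamma(\rho),\rho)$ inside the $\phi'\rho$-neighborhood ($\subset$ $\rho$-neighborhood) of $\gamma(\rho)$, i.e.\ in a thin tube about that central diameter, where one checks — after shrinking $\delta$ and $\phi'$ — that $E$ is injective and that distinct radial rays, issuing from $x$ at angle $<2\phi'$, stay disjoint up to length $\delta$. (If ``isometry into $X'$'' is read as permitting a local-isometric immersion, this step is unnecessary and the argument shortens accordingly.)
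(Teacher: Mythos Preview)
Your argument is essentially the paper's: pick a flat point $p$ near $x$ with $r(p)=d_X(p,x)$, take the minimizing geodesic $\gamma$ from $x$ to $p$, and observe that since $x$ sits on the boundary of the immersed Euclidean disk $B(p,r(p))$, an angular sector of positive width opens at $x$ inside this disk. The paper does this in four lines, concluding that $\overline{[\gamma]}$ contains an open interval of length at least $\pi$; your version adds the useful observation $r(\gamma(t))=t$ and spells out the identification with $U(I,c)$.

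Two remarks. First, you read ``nonempty interior'' as interior in the subspace topology of $\lx$, and your final paragraph tries to prove that $\overline{[\gamma]}$ contains an open subset of $\lx$. In this paper ``interior'' and ``boundary'' of a rotational component refer to its $1$-manifold structure (see Lemma~\ref{L:obstruction} and Remark~\ref{R:topnearobstr}), so the lemma only asks for $\overline{[\gamma]}$ to contain more than one linear approach; once you have a nondegenerate angular sector you are done. Your argument for the stronger statement is also not quite right: that a representative of $[\eta]\in\lx$ passes near some $\gamma_y(t)$ for one $t$ does not force $\eta$ to agree with a radial ray near $0$---you would at least need to constrain $\mathrm{dir}[\eta]$ as well, and even then the conclusion is delicate (cf.\ Remark~\ref{R:topnearobstr}).

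Second, your ``main obstacle'' about injectivity of $i_c$ is a fair worry given that Definition~\ref{angsec} asks for an isometry \emph{into} $X'$; the paper's proof does not address it either. Your sketch (``one checks'') is not a proof, but for the present lemma one can sidestep the issue: restricting to a sub-sector with $|I|<2\pi$, two rays with distinct $y$-values have distinct directions, hence distinct linear approaches, and that is all you need to conclude that $\overline{[\gamma]}$ contains an interval.
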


\begin{proof}
If $x$ is a flat point, then $\mathcal{L}(x) \cong S^1$ and the 
result is clear. So suppose $x\in\widehat{X}$ is a singular point. 
By our main hypothesis, there exists $r>0$ such that 
$B(x,r)\cap \mathrm{Sing}(X)=\{x\}$. Let $y\in B(x,r)$ be a flat 
point. Then $r(x)<r$ and $x\in\partial B(y,r(x))$. Let 
$\gamma:(0,r(x))\to X'$ be the linear approach contained in the 
segment joining $y$ to $\lim_{t\to 0}\gamma(t)=x$. Then 
$\overline{[\gamma]}$ contains an open interval centered at 
$[\gamma]$ of length at least $\pi$.
\end{proof}

\begin{lemma}\label{L:obstruction}
A class $[\gamma]\in\lX$ is in the boundary of a rotational 
component if and only if it contains a representative $\gamma$ such 
that such that for every $M>0$ there exists 
$t\in(0,\eps)$ for which $r(\gamma(t))<Mt$. 
\end{lemma}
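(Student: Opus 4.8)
The plan is to recast the condition as a statement about $d_X(\gamma(t),x)$ and then prove the two resulting implications separately. First I would observe that, $\mathrm{Sing}(X)$ being discrete, there is $r_0>0$ with $B(x,r_0)\cap\mathrm{Sing}(X)=\{x\}$; since $d_X(\gamma(t),x)\le t$ along $\gamma$, for $t<r_0/2$ the singularity nearest $\gamma(t)$ is $x$ itself, so $r(\gamma(t))=d_X(\gamma(t),x)$ for all small $t$. Hence the stated condition is equivalent to $\liminf_{t\to0}d_X(\gamma(t),x)/t=0$, a property of the germ $[\gamma]$ alone. I would then prove: $[\gamma]$ is \emph{not} an interior point of $\overline{[\gamma]}$ (i.e.\ has no neighbourhood in $\overline{[\gamma]}$ homeomorphic to an open interval) if and only if $\liminf_{t\to0}d_X(\gamma(t),x)/t=0$. (If $\overline{[\gamma]}$ is a single point the left side holds, and so does the right side, since otherwise the construction below would enlarge the component.)

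One implication is routine. Suppose $[\gamma]$ \emph{is} an interior point: fix an angular sector $(I,c_0,i_c)$ with $\gamma$ the ray at some angle $y_0$ lying in the interior of $I$, and pick $a\in(0,\pi/2]$ with $(y_0-a,y_0+a)\subseteq I$. In the coordinate $w=e^z$ the set $i_c(U(I,c_0))$ is an isometrically embedded Euclidean circular sector of radius $e^{c_0}$ and opening $|I|$, with $x$ at the vertex and $\gamma(t)$ the point at distance $t$ from the vertex along the $y_0$-ray. Since $i_c$ is an isometric embedding, the closed Euclidean ball of radius $\min\{t\sin a,\ e^{c_0}-t\}$ about $\gamma(t)$ maps isometrically into $X'$ and so meets no singularity; thus $r(\gamma(t))\ge t\sin a$ for all small $t$, giving $\liminf_{t\to0}r(\gamma(t))/t\ge\sin a>0$. (This actually yields an effective lower bound.)

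For the substantive implication, assume $d_X(\gamma(t),x)\ge ct$ on $(0,\eps_0)$; I want to build an angular sector in which $\gamma$ is an interior ray. Put $\theta_0=\arcsin(c/2)$. For each $s\in(0,\eps_0)$, $\exp_{\gamma(s)}$ is a local isometric immersion on the Euclidean disc of radius $cs$ (nothing in $\mathrm{Sing}(X)$ is that close to $\gamma(s)$), sending the diameter in the direction $\dot\gamma$ onto $\gamma$; because overlapping such discs overlap along their horizontal diameters and the immersions agree there, these glue to a single orientation-preserving local isometry $D=D_\delta$ on the convex cone $C_\delta=\{\,w\in\C:0<|w|<\delta\cos\theta_0,\ |\arg w|<\theta_0\,\}$ (any $\delta\le\eps_0$), with the central ray mapping onto $\gamma$ and $D(w)\to x$ as $w\to0$. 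As $D$ is conformal and $C_\delta$ connected, $D^{*}\omega=e^{i\psi_0}\,dw$ with $\psi_0=\mathrm{dir}(\gamma)$ constant; so $D$ carries each ray $R_\psi=\{\rho e^{i\psi}:\rho>0\}$ to a linear approach $\gamma_\psi$ to $x$ with $\mathrm{dir}(\gamma_\psi)\equiv e^{i(\psi_0+\psi)}$, and distinct rays give distinct linear approaches. If $D_\delta$ is injective it is an isometric embedding, and pulling back along $w=e^z$ presents it as an angular sector $\big((-\theta_0,\theta_0),\ \ln(\delta\cos\theta_0),\ D_\delta\circ\exp\big)$ in which $\gamma$ is the ray at the interior angle $0$; hence $[\gamma]$ is an interior point of $\overline{[\gamma]}$, as required.

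So everything reduces to the injectivity of $D_\delta$ for small $\delta$, and this I expect to be the main obstacle. On a single ray it is easy: a self-crossing of $\gamma_\psi$ would give one point attained with the same position and velocity at two instants (the direction being constant along $R_\psi$), hence $\gamma_\psi$ would be a periodic geodesic, contradicting $\gamma_\psi(t)\to x$. For a cross-ray coincidence $D(v)=D(v')$ with $\arg v\ne\arg v'$ I would take one minimizing $\rho^{*}:=\max\{|v|,|v'|\}$. If $\rho^{*}>0$, then $v,v'$ are interior points of $C_\delta$, so $D$ restricts to isometries of small round discs about $v$ and about $v'$ onto the same genuinely embedded flat disc about $D(v)$ in $X'$; comparing the two charts and displacing $D(v)$ slightly so that both of its nearby preimages acquire smaller modulus produces a coincidence with $\max<\rho^{*}$, a contradiction. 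It then remains to rule out coincidences with $\max\{|v|,|v'|\}$ arbitrarily small; such a coincidence yields a local translation symmetry $D(\,\cdot+\tau\,)=D(\,\cdot\,)$ with $|\tau|$ arbitrarily small, so either $\tau\parallel\dot\gamma$ and $\gamma$ is periodic (impossible), or this symmetry sweeps out an immersed flat cylinder of circumference $|\tau|$ meeting $\gamma$ at some $\gamma(s)$ with $s=O(|\tau|)$, and enlarging it to a maximal cylinder forces a singularity within $O(s)$ of $\gamma(s)$ — and it is exactly here that the hypothesis $d_X(\gamma(t),x)\ge ct$ must be pushed to obtain the contradiction. The delicate point throughout is that $\mathcal{L}(X)$ need not be regular and the topology near a wild $x$ can be complicated, so the non-refocusing of nearby geodesics has to be forced by the linear lower bound rather than by any compactness or covering-space argument.
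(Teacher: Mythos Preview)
Your strategy matches the paper's on both implications. For ``interior $\Rightarrow$ lower bound on $r\circ\gamma$'', your argument via an embedded Euclidean sector is essentially identical to the paper's. For the converse, the paper is far terser than you: it simply asserts that inside $\bigcup_t B(\gamma(t),r(\gamma(t)))$ there is a domain $D$ carrying a flat chart in which the rays $\{(s,M's)\}$ exhibit an open neighbourhood of $[\gamma]$ in $\overline{[\gamma]}$ --- no explicit construction, no mention of injectivity. You rebuild this map as the glued local isometry $D_\delta$ and then correctly observe that the definition of angular sector requires $i_c$ to be an isometric embedding, hence injective; this is a genuine subtlety the paper passes over in one sentence. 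Your preliminary reduction $r(\gamma(t))=d_X(\gamma(t),x)$ for small $t$ is a nice clarification the paper does not make.

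Your injectivity argument, however, is incomplete. The ``minimize $\rho^*$ and displace'' step only shows that \emph{if} coincidences exist then they occur at arbitrarily small scale --- the infimum need not be attained, so this rules out nothing by itself. The cylinder endgame is where the real gap lies: a short closed geodesic through $D(v)$ does give an immersed flat cylinder, but enlarging it to a maximal cylinder does \emph{not} force a singularity within $O(s)$ of $\gamma(s)$ --- the cylinder could have large height, with its boundary singularities far from $\gamma$. The hypothesis $r(\gamma(t))\ge ct$ controls distance to $\mathrm{Sing}(X)$, not the injectivity radius, so it does not directly forbid short closed geodesics near $\gamma$. You have correctly located a delicate point the paper elides, but not closed it; to be fair, the paper's own proof does not close it either.
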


\begin{proof}
We address first sufficiency. Let $\gamma:(0,\eps)\to X'$ be a 
representative for which there is $M>0$ such that for every 
$t\in(0,\eps)$ we have $(r\circ\gamma)(t)\geq Mt$. Then, there 
exists an open subset $D$ of 
$\cup_{t\in(0,\eps/2)}B(\gamma(t),r(\gamma(t)))$ such that: 
(i) there is a flat chart defined on $D$ onto the $xy$-plane for 
which $\gamma(t)$, $t\in(0,\eps/2)$ is the segment $(0,\eps)$ and 
(ii) in this coordinates the segments 
$\{(s,M's)\mid s\in(0,\eps/2),\,0<M'<M/2\}$ define an open 
$U\subset D$ in $\overline{[\gamma]}$ containing $[\gamma]$. 

For necessity remark that if $[\gamma]$ is an interior point of 
$\overline{[\gamma]}$, then there is an angular sector $(I,c,i_c)$, 
with $I$ an open interval, such that $i_c(U(I,c))$ contains a 
representative $\gamma:(0,\eps)\to X'$ of $[\gamma]$. Without loss 
of generality we suppose that such representative corresponds to the 
middle point of $I$. Given that the image of $i_c$ lies within $X'$, 
we can think of $i_c(U(I,c))$ as the angular sector in the 
$xy$-plane defined by:
\[
\{(x,y) \mid 0<x<\eps,\ |y|\leq Mx\}
\]
for some fixed $M>0$. Hence $(r\circ \gamma)(t)\geq Mt$ for all 
$t\in(0,\eps)$.
\end{proof}

\begin{proposition}\label{P:closure}
Each $\mathcal{L}(x)$ is the closure of the interior of its 
rotational components.
\end{proposition}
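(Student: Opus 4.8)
The plan is to show that every linear approach $[\gamma]\in\lx$ lies in the closure of the union of interiors of rotational components of $\lx$, since the reverse inclusion (the closure is contained in $\lx$) is trivial because $\lx$ is closed in itself. So fix $[\gamma]\in\lx$ with a representative $\gamma:(0,\eps)\to X'$, and I need to produce linear approaches lying in the interior of their rotational components that are arbitrarily close to $[\gamma]$ in the subspace topology of $\lx$, i.e.\ meeting every basic neighborhood $\tilde{B}(\gamma(t),r)^t\cap\lx$.

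First I would recall the criterion from Lemma~\ref{L:obstruction}: $[\gamma]$ is in the \emph{interior} of $\overline{[\gamma]}$ precisely when it has a representative with $(r\circ\gamma)(t)\geq Mt$ for all $t$ in some $(0,\eps)$ and some $M>0$, where $r(\cdot)=\mathrm{dist}(\cdot,\mathrm{Sing}(X))$. The key geometric input is Lemma~\ref{L:nonempty}: every $\lx$ contains a rotational component with nonempty interior, built from a flat point $y\in B(x,r)$ near $x$ by taking the radial segment from $y$ to $x$. The idea is to mimic this construction but \emph{starting the radial segment from a point near $\gamma(t)$} for $t$ small. Concretely, fix a small $t_0\in(0,\eps)$ and a target radius; the point $\gamma(t_0)\in X'$ has $r(\gamma(t_0))>0$, so I can pick a flat point $y$ in a tiny ball around $\gamma(t_0)$, let $r(y)$ be its distance to $\mathrm{Sing}(X)$, and consider the geodesic segment realizing this distance, which issues from some singular point $x'$ and passes near $\gamma(t_0)$. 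This produces a linear approach $[\gamma']$ to $x'$ whose interior-membership in its rotational component is guaranteed by the radial construction (the cone condition $r\circ\gamma'\geq Mt$ holds along a radius out of a point).

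The main obstacle — and where care is needed — is that I need the approximating interior linear approaches to land in $\lx$ itself, i.e.\ to converge to the \emph{same} point $x=\mathrm{bp}[\gamma]$, not merely to nearby points of $\widehat{X}$; and simultaneously to be close to $[\gamma]$ in the uniform metric at the relevant times. The way around this is to not insist on changing the basepoint: instead, for each $t_0$ small, I would argue that $[\gamma]$ itself, once restricted and viewed appropriately, is already approximable. More robustly, the cleanest route is: given $[\gamma]\in\lx$ and a basic neighborhood $N=\tilde{B}(\gamma(t),r)^t\cap\lx$, use Lemma~\ref{L:nonempty} applied to $x$ to get a rotational component $\overline{[\delta]}$ with an open interval of interior points, then rotate within that component — i.e.\ use the local angular-sector translation structure on $\overline{[\delta]}$ — and combine with the fact, noted just before Lemma~\ref{L:nonempty}, that one can realize any connected subset of $\R$ as a rotational component, to interpolate. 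Actually the decisive observation is that if $[\gamma]$ is itself an interior point we are done trivially, and if it is a boundary point of $\overline{[\gamma]}$ then by Lemma~\ref{L:obstruction} there are times $t_n\to 0$ with $r(\gamma(t_n))$ small relative to $t_n$; near each such $t_n$ one finds a genuine singular point $x_n$ very close (in $\widehat X$) but the linear approaches \emph{to $x$ along directions infinitesimally rotated from $\dot\gamma$} must sweep through interior points because the obstruction (small-disk radius) is a measure-zero phenomenon in the angular parameter.

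Thus the key steps, in order: (1) reduce to showing each $[\gamma]$ is a limit of interior points of rotational components of $\lx$; (2) if $[\gamma]$ is interior, done; (3) if $[\gamma]\in\partial\overline{[\gamma]}$, use Lemma~\ref{L:obstruction} to locate the obstructing singularities along $\gamma$; (4) show that for a dense set of directions $\theta$ near $\mathrm{dir}[\gamma]$, the linear approach to $x$ in direction $\theta$ (which exists for $\theta$ in a half-open subinterval, by the angular-sector picture of $\lx$ near $x$) satisfies the cone condition $r\circ\cdot\geq Mt$ and hence is interior by Lemma~\ref{L:obstruction}; (5) check that these interior approaches, for $\theta\to\mathrm{dir}[\gamma]$, converge to $[\gamma]$ in $\lx$ using the generating sets $\tilde{B}(x,r)^t$ from Proposition~\ref{P:BxIsBasis} — this uses continuity of $\mathrm{dir}$ and the fact that geodesics from a common point with nearby directions stay uniformly close on $(0,t)$. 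Step (4) is the heart of the matter: it amounts to showing that the set of directions at $x$ for which the emanating geodesic hits a singularity "too soon" has empty interior in the relevant angular sector, which follows because each such bad direction is cut out by a saddle connection and these form a set with empty interior along any rotational component (a direction-discreteness statement in the spirit of the finite-length obstruction discussion in Example~\ref{Ex:chamanara}).
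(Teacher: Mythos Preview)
Your final strategy---rotate the direction while keeping the basepoint $x$---has a genuine gap: it cannot handle the case where $\overline{[\gamma]}$ consists of a single point, as in Example~\ref{Ex:doubleparabola}. In that case there are no other linear approaches to $x$ in the same rotational component, so there is nothing to rotate to, and you give no argument for why a \emph{different} rotational component of $\lx$ should furnish linear approaches close to $[\gamma]$ in the topology of $\lX$. Your justification of step~(4)---that bad directions are ``cut out by a saddle connection'' and hence have empty interior---is also off target: boundary points of rotational components are characterized by the failure of the cone bound $r\circ\gamma\ge Mt$ (Lemma~\ref{L:obstruction}), which is a condition about singularities accumulating toward the trajectory, not about the trajectory terminating in one; and the angular interval in which you seek a dense set of interior directions may simply be empty.

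Ironically, the idea you sketched first and then discarded is essentially the paper's argument. The paper keeps the \emph{direction} of $\gamma$ fixed and varies the \emph{basepoint} perpendicularly: a transverse segment $\sigma(t)$ slides along $\gamma$, one takes the maximal convex minorant $s(t)$ of its (shrinking) length, and $\gamma_t$ is the trajectory parallel to $\gamma$ through the point of $\sigma(t)$ at perpendicular distance $s(t)$ from $\gamma(t)$. Convexity of $s$ forces each $[\gamma_t]$ to satisfy the cone bound, hence to be interior in its rotational component by Lemma~\ref{L:obstruction}; and at the corners of $s$ the basepoint of $\gamma_t$ lies in $\mathrm{Sing}(X)$. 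The obstacle you correctly flagged---that this singular basepoint need not be $x$---is dissolved precisely by the Main Hypothesis: $x$ is isolated in $\mathrm{Sing}(X)$, so for $t$ small enough the only singular point available is $x$ itself. That one observation is what you were missing.
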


\begin{proof}
Let $[\gamma] \in \mathcal{L}(x)$. If $[\gamma]$ itself is contained 
in the interior of a rotational component or if it is a boundary 
point for some rotational component with non-empty interior, 
then we are done. Suppose, then, that it is not, and choose a 
representative path $\gamma : (0,\eps) \to X$. Consider a segment 
$\sigma(t)$ of variable length traveling along $\gamma$, for which 
$\gamma(t)$ is its perpendicular bisector at each time $t$. By 
Lemma~\ref{L:obstruction}, $(r \circ \gamma)(t) \to 0$ as $t \to 0$. 
We may assume that the length of $\sigma(t)$ is a monotonic, 
piecewise constant function along $(0,\eps)$ (necessarily having 
countably infinitely many discontinuities) that tends to $0$ at 
$t \to 0$. Let $s(t)$ be the maximal convex function bounded above 
by the length function of $\sigma(t)$; this is a continuous piecewise 
affine function with countably many points of non-differentiability, 
corresponding to a subset of times when $\sigma(t)$ reaches a 
singularity. Then for every $t \in (0,\eps)$, there is at least one 
trajectory $\gamma_t$ having the same direction as $\gamma$ and 
basepoint at the point of $\sigma(t)$ at distance $s(t)$ from 
$\gamma(t)$ along $\sigma(t)$. Each of these $\gamma_t$ lies in the 
interior of a rotational component (by convexity of $s(t)$). Moreover, 
the $[\gamma_t]$ converge to $[\gamma]$. By taking times $t_n \to 0$ 
where $s(t)$ is non-differentiable, we obtain a sequence 
$[\gamma_{t_n}] \to [\gamma]$ of linear approaches having basepoints 
in $\mathrm{sing}(X)$. Because $x$ is isolated from the rest of 
$\mathrm{Sing}(X)$, the result is proved.
\end{proof}

\begin{remark}\label{R:topnearobstr}
Each rotational component is naturally immersed in $\lX$. However, 
its topology as translation 1-manifold can differ from its topology 
as subspace of $\lX$. Consider the geometric construction 
(\S\ref{SS:examples}, example \ref{Ex:geoconst}) performed on the 
real plane. The resulting translation surface presents just one wild 
singularity $x$ and $\lx$ is composed by two rotational components 
each of which is, as translation 1-manifold, isometric to 
$(0,\infty)$. Let $\gamma_1(t)=(1,t)$ and $x=(1,\frac{1}{2})$. It is 
not difficult to see that for each $0<r<\frac{1}{4}$ the open set 
$\widetilde{B}(x,r)^{\frac{1}{2}}$ contains $\overline{[\gamma_1]}$ 
and $\widetilde{B}(x,r)^{\frac{1}{2}}\cap\overline{[\gamma_1]}$ is 
formed by an infinite family of disjoint open intervals 
$\{(a_n,b_n)\}\subset (0.\infty)$. Given that $\overline{[\gamma_1]}$ 
as a $1$-manifold is \emph{locally connected}, the preceding 
discussion implies that the topology of $\overline{[\gamma_1]}$ as 
subspace of $\lX$ is not equivalent to its topology as a $1$-manifold.
\end{remark}

\subsection{Transverse measures}

Along with the angular metric on each rotational component of 
$\mathcal{L}(X)$ and the length metric on subsets of $\mathcal{L}(X)$ 
lying along the same geodesic trajectory, there is an obvious measure 
to consider on subsets of $\mathcal{L}(X)$ having the same direction. 
For each $\theta \in S^1$, let 
$\mathcal{L}_\theta(X) = \mathrm{dir}\inv(\theta)$, and for each 
$x \in \widehat{X}$, let $\mathcal{L}_\theta(x) = 
\mathcal{L}_\theta(X) \cap \mathcal{L}(x)$. We use 
$\mathcal{F}_\theta$ to denote the foliation of $X$ in the direction 
$\theta$. Elements of $\mathcal{L}_\theta(X)$ may now be thought of 
as germs of (oriented) leaves of $\mathcal{F}_\theta$.

Recall that $\mathcal{F}_\theta$ is obtained by integrating the 
kernel field of the one-form $v \mapsto v \cdot v_{\theta^\perp}$, 
where $v_{\theta^\perp}$ is the vector field of unit-length vectors 
whose direction is rotated $\pi/2$ counterclockwise from the 
direction $\theta$; $\mathcal{F}_\theta$ carries the transverse 
measure $\nu_\theta$, which is the absolute value of this one-form. 
If $f : X \to \R_{\ge0}$ is any non-negative, locally bounded, Borel 
measurable function, then $f\nu_\theta$ is a Borel measure that 
can be integrated, at least, over rectifiable curves in $X$. 

Let $B \subset \mathcal{L}_\theta(X)$ be a Borel subset. A 
{\em representative} of $B$ will be a continuous choice $L_B$ of 
representatives for $[\gamma] \in B$, i.e., a continuous section 
$B \to \tilde{\mathcal{L}}(X)$, so that $L_B([\gamma]) \in 
[\gamma]$ varies continuously in length with respect to the topology 
on $\tilde{\mathcal{L}}(X)$. The {\em length} of a representative 
$L_B$ (which may be infinite) is 
\[
\ell(L_B) 
= \sup_{[\gamma]\in B} \left\{\mathrm{length}(L_B([\gamma]))\right\}.
\]
A piecewise $C^1$ curve $\tau : I \to X$ ($I$ may be open or 
closed, bounded or unbounded) is said to be {\em transverse} to a 
representative $L_B$ if it is transverse to each element of 
$L_B$; a collection of at most countably many piecewise $C^1$ 
curves $\{\tau_i\}$ is {\em full} with respect to $L_B$ if each 
$\tau_i$ is transverse to $L_B$ and every element of $L_B$ 
intersects some $\tau_i$. Each representative $L_B$ of $B$ induces 
a characteristic function $\chi_B : X \to \{0,1\}$ whose support is 
the union of the images of elements of $L_B$. (Note that $\chi_B$ 
is not canonical; it depends on a choice of $L_B$.) Now we define 
the measure $\mu_\theta$ of $B$ by 
\begin{multline*}
\mu_\theta(B) = \\
\limsup_{\ell(L_B)\to0} \left[
  \inf \left\{ \sum_i \int_{\tau_i} \chi_B\,\nu_\theta \,\Big\vert\, 
               \text{$\chi_B$ induced by $L_B$, 
                     $\{\tau_i\}$ full with respect to $L_B$} 
       \right\}
  \right].
\end{multline*}
It is fairly obvious that $\mu_\theta$ is a Borel measure on 
$\mathcal{L}_\theta(X)$. Likewise, it restricts to a Borel measure 
on $\mathcal{L}_\theta(x)$ for any $x \in \widehat{X}$. The measures 
$\mu_\theta$ and $\nu_\theta$ are compatible in the following sense.

\begin{proposition}
Let $\sigma$ be a topological segment in $X$, transverse to 
$\mathcal{F}_\theta$, and let $\tilde\sigma = (\sigma,\theta)$ be 
the corresponding subset of $X \times S^1 \subset \mathcal{L}(X)$. 
Then $\tilde\sigma$ is a Borel subset of $\mathcal{L}_\theta(X)$, 
and $\nu_\theta(\sigma) = \mu_\theta(\tilde\sigma)$.
\end{proposition}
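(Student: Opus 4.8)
The plan is to establish the Borel claim first, then prove the two inequalities $\mu_\theta(\tilde\sigma)\le\nu_\theta(\sigma)$ and $\mu_\theta(\tilde\sigma)\ge\nu_\theta(\sigma)$ separately. Since $\mathcal{F}_\theta$ lives only on $X'$, transversality forces $\sigma\subset X'$, so $\tilde\sigma=i(\sigma\times\{\theta\})$, where $i:T^1(X')\hookrightarrow\lX$ is the embedding of Proposition~\ref{P:NatEmbedding}. As the continuous injective image of a compact set in the Hausdorff space $\lX$, $\tilde\sigma$ is compact, hence closed, and $\tilde\sigma\subseteq\mathrm{dir}\inv(\theta)=\mathcal{L}_\theta(X)$ by continuity of $\mathrm{dir}$; so $\tilde\sigma$ is a Borel (in fact closed) subset of $\mathcal{L}_\theta(X)$. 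Next, both $\nu_\theta$ (obviously) and $\mu_\theta$ (subdivide $\sigma$ and split each full family $\{\tau_i\}$ accordingly) are finitely additive over partitions of $\sigma$ into sub-segments, so I may assume $\sigma$ is a short embedded transversal inside a single flat chart meeting each leaf of $\mathcal{F}_\theta$ at most once; in flat coordinates with $\theta$ horizontal this gives $\nu_\theta=|\D y|$, with $\nu_\theta$ on (subsets of) $\sigma$ equal to the pushforward under $y$ of Lebesgue measure.

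For every sufficiently small $\ell>0$ there is a representative $L_B$ of $\tilde\sigma$ assigning to $i(p,\theta)$ the geodesic $\gamma_p:(0,\ell)\to X'$ issuing from $p$ in direction $\theta$; this is legitimate because $r$ is continuous and positive on the compact set $\sigma\subset X'$, and for $\ell$ small the flow box $\mathcal{R}=\bigcup_{p\in\sigma}\gamma_p((0,\ell))$ is embedded with product structure $\mathcal{R}\cong\sigma\times(0,\ell)$ whose slices are the $\gamma_p$, and $\mathrm{supp}(\chi_B)=\mathcal{R}$. For the upper bound, take any representative $L_B$ whatsoever, set $m=\min_{p\in\sigma}\mathrm{length}(\gamma_p)>0$, and push $\sigma$ a time $m/2$ along $\mathcal{F}_\theta$ to obtain a single transversal $\tau$ that is transverse to $L_B$, meets every $\gamma_p$, and carries $\chi_B\equiv1$; holonomy-invariance of the transverse measure gives $\int_\tau\chi_B\,\nu_\theta=\nu_\theta(\tau)=\nu_\theta(\sigma)$, so the infimum in the definition of $\mu_\theta$ is at most $\nu_\theta(\sigma)$ for each $L_B$, whence $\mu_\theta(\tilde\sigma)\le\nu_\theta(\sigma)$.

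For the lower bound, fix $L_B$ of constant small length $\ell$ as above and let $\{\tau_i\}$ be any full family. Since $\chi_B$ is supported on $\mathcal{R}$, $\int_{\tau_i}\chi_B\,\nu_\theta$ is the $\nu_\theta$-length of the part of $\tau_i$ lying in $\mathcal{R}$; the leaf-projection $\pi_\sigma:\mathcal{R}\to\sigma$ preserves the $y$-coordinate, so this $\nu_\theta$-length, being at least the total variation of $y$ along that part of $\tau_i$, is at least the Lebesgue measure of the set of $y$-values it attains, i.e.\ at least $\nu_\theta\big(\pi_\sigma(\tau_i\cap\mathcal{R})\big)$. Fullness forces every $\gamma_p$ to meet some $\tau_i$, necessarily inside $\mathcal{R}$ at a point projecting to $p$, so $\bigcup_i\pi_\sigma(\tau_i\cap\mathcal{R})=\sigma$; summing over $i$ and using subadditivity of $\nu_\theta$ gives $\sum_i\int_{\tau_i}\chi_B\,\nu_\theta\ge\nu_\theta(\sigma)$. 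Hence the infimum equals $\nu_\theta(\sigma)$ for these $L_B$, so $\mu_\theta(\tilde\sigma)\ge\nu_\theta(\sigma)$, and together with the previous paragraph this yields $\nu_\theta(\sigma)=\mu_\theta(\tilde\sigma)$.

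The step I expect to fight with is the reduction to a short embedded single-chart transversal together with the co-area estimate $\int_{\tau_i}\chi_B\,\nu_\theta\ge\nu_\theta(\pi_\sigma(\tau_i\cap\mathcal{R}))$: one must verify that a topological transverse segment subdivides into finitely many embedded graph-like transversals, that $\mathcal{R}$ really is a product for $\ell$ small, that $\mu_\theta$ is genuinely additive under such a subdivision, and that the merely piecewise-$C^1$ (and possibly countably-many-component) curves $\tau_i$ introduce no measure-theoretic pathology when one restricts $\nu_\theta$ to the portion of $\tau_i$ inside $\mathcal{R}$ and projects along leaves. The remaining ingredients — existence of the constant-length representatives, embeddedness of small flow boxes, and holonomy-invariance of $\nu_\theta$ — are routine once this setup is in place.
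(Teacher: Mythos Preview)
Your argument is correct; the paper's own proof consists of the single word ``Clear.'' The flow-box and co-area reasoning you provide is exactly the natural unpacking of the definition of $\mu_\theta$, and the technical points you flag at the end are indeed routine in the flat setting once one uses that the paper already asserts $\mu_\theta$ is a Borel measure (hence additive over a subdivision of $\sigma$).
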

\begin{proof}
Clear.
\end{proof}

\begin{remark}
The three types of measures on subsets of $\mathcal{L}(X)$---%
rotational, geodesic, and transverse---are closely analogous to 
the three types of closed one-parameter subgroups of 
$\mathrm{SL}(2,\R)$. Indeed, rotational components are the orbits 
of a canonical ``partial action'' on $\mathcal{L}(X)$ by 
$\widetilde{\rm{SO}}(2)$, the universal cover of $\rm{SO}(2)$. 
The motions along and transversely to geodesic trajectories are 
akin to the geodesic and horocyclic flow on a hyperbolic surface.
\end{remark}

\subsection{Finite type surfaces}

To conclude this section, we provide a new characterization of the ``classical'' translation surfaces, which are included in the next 
definition.
\begin{definition}
A translation surface has {\em finite affine type} if it has 
finite area and the underlying Riemann surface has finite 
analytic type (that is, it is obtained from a compact Riemann 
surface by finitely many punctures). 
\end{definition}

These are often called ``pre-compact'' translation surfaces in the 
literature (see \cite{GJ}); however, as the examples in \S\ref{SS:examples} 
show, a translation surface of infinite genus may have a metric 
completion which is compact. The condition of finite area is 
necessary to rule out abelian differentials which are holomorphic 
on the surface but have poles at the punctures.

\begin{lemma}
Let $x \in \widehat{X}$. Suppose $\ell$ has a positive lower bound 
on $\lx$. Then $x$ is either a cone point or an infinite-angle 
singularity. In particular, if $\lx$ is compact, then $x$ is a 
cone point.
\end{lemma}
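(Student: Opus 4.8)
The plan is to show that the hypothesis $\ell\geq\delta>0$ on $\lx$ forces a punctured metric ball around $x$ to be isometric to a standard model --- a $k$-fold, or the infinite, cyclic cover of a punctured disc --- which is exactly the defining property of, respectively, a cone point and an infinite-angle singularity. Granting this, the ``in particular'' clause is immediate: on a compact $\lx$ the lower semi-continuous function $\ell$ attains a minimum, and that minimum is positive because every linear approach has a representative of positive length, so the first part applies; since an infinite-angle singularity has $\lx$ isometric to $\R$ (hence non-compact), the only possibility on compact $\lx$ is a cone point.

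\emph{Localization.} By the Main Hypothesis $x$ is isolated in $\mathrm{Sing}(X)$, so pick $\delta'\in(0,\delta)$ with $B(x,\delta')\cap\mathrm{Sing}(X)=\{x\}$; then the punctured ball $\dot B:=B(x,\delta')\setminus\{x\}$ lies in $X'$. Since $\ell[\gamma]\geq\delta>\delta'$ for every $[\gamma]\in\lx$, each linear approach to $x$ has a unique representative defined on $(0,\delta')$, so there is a well-defined ``exponential'' map $E\colon\lx\times(0,\delta')\to\dot B$, $E([\gamma],t)=\gamma(t)$; it is continuous (as one checks directly from the generating set for the topology on $\lX$ in Proposition~\ref{P:BxIsBasis}), and restricted to any angular-sector chart of a rotational component it is an isometric parametrization.

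\emph{No boundary points --- the crux.} I would show that no rotational component of $\lx$ has a boundary point. Suppose $[\gamma_0]$ were one. By Lemma~\ref{L:obstruction} it has a representative with $\liminf_{t\to 0} r(\gamma_0(t))/t = 0$, and since the only singularity within $B(x,\delta')$ is $x$ itself, there is a sequence $t_n\downarrow 0$ with $d_X(\gamma_0(t_n),x)=o(t_n)$: the geodesic $\gamma_0$ runs arclength $t_n$ out of $x$ and then returns to within $o(t_n)$ of $x$. Concatenating $\gamma_0|_{(0,t_n]}$ with a short path from $\gamma_0(t_n)$ back toward $x$ gives an essential loop at $x$ of length $O(t_n)$, whose geodesic representative is a concatenation of saddle connections issuing from $x$, at least one of which has length $O(t_n)$; the associated linear approach lies in $\lx$ and has maximal length $O(t_n)\to 0$, contradicting $\ell\geq\delta$. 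This is the main obstacle: one must rule out that the loop is inessential (equivalently, that $\gamma_0|_{(0,t_n]}$ is homotopic rel $x$ to the short return path, leaving no short geodesic loop behind). For this I would pass to the developing map of $\dot B$ and use the absence of other singularities near $x$ --- so the only way a geodesic near $x$ can be blocked is by $x$ itself --- together with the fact that being a boundary point of a rotational component genuinely ``pinches'' the fan against $x$, so that $\gamma_0$ cannot be homotoped across the gap without passing over the singularity.

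\emph{One component, then identification.} With no boundary points, Proposition~\ref{P:closure} shows every point of $\lx$ is interior to a rotational component, so each rotational component is a connected translation $1$-manifold without boundary, hence isometric to $S^1$ or to $\R$. Using $E$, the image of the angular-sector charts of a single rotational component is open and closed in the connected space $\dot B$ (shrinking $\delta'$ if necessary), so $\lx$ is a single rotational component. If it is $\R$, patching those charts --- each extended to radial extent $\delta'$ using $\ell\geq\delta$, glued along the whole component --- identifies $\dot B$ with the infinite cyclic cover of the punctured disc of radius $\delta'$, with form $e^z\,\D{z}$, so $x$ is an infinite-angle singularity. If it is $S^1$, then since $\mathrm{dir}$ restricted to the component is a local homeomorphism onto $S^1$ along which arclength advances at unit speed, its circumference must be $2\pi k$ for some $k\in\N$, and the same patching identifies $\dot B$ with the $k$-fold branched cover of the punctured disc, i.e.\ $\omega=z^{k-1}\,\D{z}$ near $x$; thus $x$ is a cone point of angle $2\pi k$.
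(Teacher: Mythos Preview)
The paper's proof is a single sentence: if $\ell$ has a positive lower bound on $\lx$, then the direction map $\lx\to S^1$ is a covering map, and the result follows. Your argument is considerably more elaborate and takes a genuinely different route: rather than invoking the classification of connected covers of $S^1$, you try to build an explicit isometry from a punctured ball around $x$ to a standard model, via the exponential-type map $E$ and the rotational-component structure. In effect your last two paragraphs unpack what the paper leaves as ``from which the result follows.''

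However, your argument has a real gap at the step you yourself flag. In the ``crux'' paragraph you need the loop formed by $\gamma_0|_{(0,t_n]}$ and a short return path to be essential, so that tightening it yields a short saddle connection and hence a linear approach with $\ell=O(t_n)\to 0$. Your justification --- ``pass to the developing map \dots the fan is pinched against $x$'' --- is a sketch, not a proof. The difficulty is not cosmetic: the loop is based at the singular point $x\notin\dot B$, so one cannot appeal directly to uniqueness of geodesics in a simply connected flat region, and the homotopy type of $\dot B$ near $x$ is exactly what is in question. Your phrasing ``no boundary points'' also does not obviously exclude one-point rotational components (cf.\ Example~\ref{Ex:doubleparabola}), and your invocation of Proposition~\ref{P:closure} in the next paragraph does not do the work you claim for it. As it stands the contradiction does not close; the assertion that $\ell\ge\delta$ forces every linear approach to lie in the interior of its rotational component --- equivalently, that $\mathrm{dir}$ is a local homeomorphism on $\lx$ --- is essentially the whole content of the lemma, and this is what remains to be established.
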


\begin{proof}
If $\ell$ has a positive lower bound on $\lx$, then the direction 
map $\lx \to S^1$ is a covering map, from which the result follows.
\end{proof}

\begin{proposition}
$X$ has finite affine type if and only if $\widehat X$ is compact 
and $\lx$ is compact for every $x \in \widehat X$.
\end{proposition}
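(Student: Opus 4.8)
The plan is to establish the two implications separately, the reverse one resting on the Lemma just proved. For the forward implication, suppose $X$ has finite affine type, so that $X = \bar X \setminus \{p_1,\dots,p_n\}$ for a compact Riemann surface $\bar X$ and $\int_X |\omega|^2 < \infty$. The first step is to observe that finite area forces $\omega$ to extend holomorphically across each puncture: writing $\omega = f(z)\,\D z$ in a punctured coordinate disc around $p_i$ and expanding $f$ in a Laurent series, a direct computation of $\int |f|^2\,dA$ in polar coordinates shows that any nonzero Laurent coefficient of negative index makes the area infinite there. Hence $\omega$ is a holomorphic $1$-form on $\bar X$, the flat metric extends to a flat metric with cone-type singularities on $\bar X$, and the metric completion $\widehat X$ coincides with $\bar X$ (the finitely many punctures are cone points, hence at finite distance), which is compact. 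Finally, every point of $\widehat X = \bar X$ is a cone point, so $\mathcal L(x)$ is a circle (see the Remark after Remark~\ref{R:rotcom}) and in particular compact.

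For the reverse implication, assume $\widehat X$ is compact and $\mathcal L(x)$ is compact for every $x$. First I would note that $\mathrm{Sing}(X)$, being a discrete subset of the compact space $\widehat X$ by the main hypothesis, is finite. Next, for each $x \in \mathrm{Sing}(X)$ the compactness of $\mathcal L(x)$ together with the preceding Lemma shows that $x$ is a cone point; every other point of $\widehat X$ is a cone point of angle $2\pi$. Thus $\widehat X$ is a compact surface carrying a flat metric whose only singularities are finitely many cone points.

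It then remains to extract finite area and finite analytic type. For the area, every point of $\widehat X$ has a neighborhood isometric to a Euclidean disc or to a Euclidean cone of finite angle and radius, each of finite area, so a finite subcover of $\widehat X$ bounds its total area. For the analytic type, near each point of $\mathrm{Sing}(X)$ the form $\omega$ has a local holomorphic expression $z^m\,\D z$ (by the Lemma and the definition of a cone point); these charts, together with the translation charts on $X'$, make $\widehat X$ into a compact Riemann surface $\bar X$ on which $\omega$ extends to a holomorphic $1$-form. Since $X' \subseteq \widehat X = \bar X$ and the zeros of $\omega$ also lie in $\widehat X$, we have $X \subseteq \bar X$ with $\bar X \setminus X$ contained in the finite set $\mathrm{Sing}(X)$, so $X$ is a compact Riemann surface with finitely many punctures. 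Combined with finite area, this shows $X$ has finite affine type.

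I expect the routine parts to be the area estimate and the verification that the cone charts glue with the translation charts into a Riemann surface on which $\omega$ extends holomorphically --- both standard. The step needing the most care is the bookkeeping in the reverse direction, namely confirming that $X$ is genuinely $\bar X$ minus a finite set rather than a proper subsurface; this is exactly where the already-established finiteness of $\mathrm{Sing}(X)$ is used.
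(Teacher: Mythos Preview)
Your proof is correct and follows essentially the same approach as the paper's: extend $\omega$ across the punctures via the finite-area hypothesis in the forward direction, and in the reverse direction use the preceding Lemma to conclude every singularity is a cone point and then extend the conformal structure. You supply more detail than the paper does (the Laurent computation, the finite-area cover argument, the bookkeeping that $X$ sits in $\bar X$ with finite complement), but the skeleton is the same; the only small omission is that when you say a discrete subset of a compact space is finite you are implicitly using that $\mathrm{Sing}(X) = \widehat X \setminus X'$ is closed.
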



\begin{proof}
Suppose that $X$ has finite affine type. Then $X$ may be made 
into a compact Riemann surface $\tilde{X}$ by adding finitely 
many points. The translation structure on $X$ is given by an 
abelian differential, which, because it has finite area, extends 
to an abelian differential on $\tilde{X}$. In this way, 
$\widehat{X}$ is canonically homeomorphic to $\tilde{X}$, 
hence compact, and every point of $\widehat{X}$ is either a 
regular point or a cone point, which implies that $\lx \cong S^1$ 
for every $x$.

Conversely, assume that $\widehat{X}$ is compact and $\lx \cong 
S^1$ for every $x \in \widehat{X}$. Because $\ell$ is lower 
semicontinuous, it has a positive lower bound on each $\lx$. This 
implies, from the preceding lemma, that the singularities of 
$\widehat{X}$ are all cone points or marked points. Therefore the 
conformal structure of $X$ extends to $\widehat{X}$, and the cone 
points of $X$ are all finite-order zeroes of an abelian 
differential on $\widehat{X}$, which means $X$ has finite affine 
type.
\end{proof}

\section{Isometries between neighborhoods of singularities}
\label{S:isometries}

Let $X$ and $Y$ be translation surfaces. In this section, we will 
describe a set of necessary and sufficient conditions for 
$x \in \widehat{X}$ and $y \in \widehat{Y}$ to have isometric 
(or more precisely, translation equivalent) neighborhoods. Given 
$\eps > 0$, let $N_\eps(x)$ and $N_\eps(y)$ denote the 
$\eps$-neighborhoods of $x$ and $y$, respectively, and set 
$N'_\eps(x) = N_\eps(x) \setminus\{x\}$ and $N'_\eps(y) = 
N_\eps(y) \setminus\{y\}$. We assume throughout this section that 
any choice of $\eps$ is made so that $N'_\eps(x)$ and $N'_\eps(y)$ 
contain no singularities; this is possible by our standing 
assumption that the singular sets of $X$ and $Y$ are discrete.

Recall that we have defined the direction function $\mathrm{dir}$ 
from both $\mathcal{L}(x)$ and $\mathcal{L}(y)$ to $S^1$, the maximal 
length function $\ell$ from $\mathcal{L}(x)$ and $\mathcal{L}(y)$ to 
$(0,\infty]$, and the transverse measures $\mu_\theta$ on 
$\mathcal{L}_\theta(x)$ and $\mathcal{L}_\theta(y)$ for all 
$\theta \in S^1$. For our present purposes, we must localize the 
notion of maximal length. Given $\eps > 0$, let 
\[
\ell_\eps[\gamma] = \min \{ \eps, \ell[\gamma] \}.
\]
Also let $\sigma_\eps$ be the involution defined on 
$\ell\inv((0,\eps))$ in $\mathcal{L}(x)$ or $\mathcal{L}(y)$ by 
\[
\sigma_\eps([\gamma(t)]) = [\gamma(\ell(\gamma) - t)].
\]
This is the ``pairing'' function on short saddle connections, since 
each saddle connection on from $x$ to itself, for instance, defines 
two elements of $\mathcal{L}(x)$.


\begin{theorem}\label{T:isomexist}
Let $x \in \widehat{X}$ and $y \in \widehat{Y}$. Then the following are 
equivalent:
\begin{enumerate}
\item There exist $\eps > 0$ and a homeomorphism $F : \mathcal{L}(x) 
\to \mathcal{L}(y)$ such that 
\begin{gather*}
\ell_\eps \circ F = \ell_\eps, \qquad
\sigma_\eps \circ F = F \circ \sigma_\eps, \\
\mathrm{dir} \circ F = \mathrm{dir}, \qquad\text{and}\qquad 
\forall\ \theta \in S^1,\ F^* \mu_\theta = \mu_\theta.
\end{gather*}
\item There exist $\eps' > 0$ and a translation equivalence 
$N'_{\eps'}(x) \to N'_{\eps'}(y)$.
\end{enumerate}
\end{theorem}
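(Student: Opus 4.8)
The implication (2) $\Rightarrow$ (1) is the routine direction. Given a translation equivalence $h : N'_{\eps'}(x) \to N'_{\eps'}(y)$, it is in particular affine with $Dh = \mathrm{id}$, so by the theorem on the functoriality of $f_*$ it induces a homeomorphism $F := h_* : \mathcal{L}(x) \to \mathcal{L}(y)$. Because $Dh = \mathrm{id}$, the commuting square for $\mathrm{dir}$ shows $\mathrm{dir} \circ F = \mathrm{dir}$; because $h$ is an isometry onto its image it carries unit-speed geodesics in $N'_{\eps'}(x)$ to unit-speed geodesics in $N'_{\eps'}(y)$ of the same length up to the $\eps'$ cutoff, giving $\ell_{\eps'} \circ F = \ell_{\eps'}$; it matches the two ends of each short saddle connection, giving $\sigma_{\eps'} \circ F = F \circ \sigma_{\eps'}$; and since $h$ is a translation it carries the leaf-data $\nu_\theta$ to $\nu_\theta$, so by the very definition of $\mu_\theta$ as an infimum-of-sums over full transverse families we get $F^*\mu_\theta = \mu_\theta$. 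Take $\eps = \eps'$.

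The substance is (1) $\Rightarrow$ (2): one must manufacture an honest translation equivalence of punctured neighborhoods out of the purely ``boundary'' data $F$. The plan is to build $h$ directly on $N'_\delta(x)$ for $\delta$ small. Fix $\delta < \eps$ so small that (shrinking further) every point of $N'_\delta(x)$ lies on a geodesic segment issuing from $x$; concretely, for $w \in N'_\delta(x)$ let $\gamma_w \in \mathcal{L}(x)$ be a linear approach with $\gamma_w(d_X(x,w)) = w$ — the union of the (non-empty-interior) rotational components guaranteed by Lemma~\ref{L:nonempty}, together with the ``barrel'' arguments of Proposition~\ref{P:bp&dir}, show that for $\delta$ small every such $w$ is reached by such a $\gamma_w$, and that the map $w \mapsto (\gamma_w, d_X(x,w))$ is a local homeomorphism away from the locus of short saddle connections. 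Define $h(w) = F(\gamma_w)(d_X(x,w))$. The conditions $\ell_\eps \circ F = \ell_\eps$ and $\sigma_\eps \circ F = F \circ \sigma_\eps$ are exactly what is needed for this to be well-defined: they guarantee that $F(\gamma_w)$ is still defined at parameter $d_X(x,w) < \delta$, and that when $w$ lies on a short saddle connection (so $\gamma_w$ is one of two linear approaches representing it) the two possible values of $h(w)$ coincide. The condition $\mathrm{dir}\circ F = \mathrm{dir}$ makes $h$ a translation in local charts — it sends the geodesic through $w$ in direction $\theta$ to a geodesic in the same direction $\theta$ at the same arc-length distance from the singularity. That $h$ is a local isometry, hence (being a direction-preserving local isometry) a local translation, then follows on the complement of the saddle-connection locus; continuity across that locus is forced by the gluing data $\sigma_\eps$, and the same construction applied to $F^{-1}$ produces a two-sided inverse, so $h$ is a translation equivalence.

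The main obstacle is the measure hypothesis $F^*\mu_\theta = \mu_\theta$, which at first glance plays no role in the sketch above. Its job is precisely to rule out the pathology that $F$ matches directions and lengths and short saddle connections but ``folds'' or ``tears'' mass within a rotational component of infinite length or within a spire — i.e., to certify that the bijection $w \mapsto h(w)$ constructed leaf-by-leaf is genuinely area-preserving and does not, say, collapse a positive-measure set of parallel leaves onto a single one or duplicate them. Making this precise is the delicate point: one argues that for each $\theta$ the map $h$ restricted to a transversal $\sigma$ to $\mathcal{F}_\theta$ must, by the compatibility $\nu_\theta(\sigma) = \mu_\theta(\tilde\sigma)$ of the preceding proposition together with $F^*\mu_\theta=\mu_\theta$, preserve the transverse measure $\nu_\theta$, and a direction-preserving, transverse-measure-preserving local homeomorphism of a flat surface is forced to be a local translation there. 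Assembling these local translations into a single global one on $N'_\delta(x)$, and checking that the resulting $h$ is open and surjective onto some $N'_{\eps'}(y)$ (here one uses lower semicontinuity of $\ell$ to get a uniform $\eps'$), completes the argument; I expect this ``measure $\Rightarrow$ local isometry'' step, and the bookkeeping needed to glue across the closed, nowhere-dense saddle-connection locus, to be where the real work lies.
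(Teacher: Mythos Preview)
Your overall strategy---define $f(z)=F(\gamma_z)(\delta_z)$ for $\gamma_z$ a shortest trajectory from $x$ to $z$ and $\delta_z=d_X(x,z)$---matches the paper's, but two of your steps misidentify where the hypotheses enter. For well-definedness, the ambiguity is not ``$w$ lies on a short saddle connection'': if $w$ sits at parameter $t<L-t$ along a saddle connection of length $L$, only the near end realizes $d_X(x,w)$, so $\sigma_\eps$ is not what matches the two values. The genuine ambiguity is the cut locus, where $z$ admits two shortest geodesics $\gamma_1,\gamma_2$ lying in distinct rotational components (or far apart in one). The paper (Lemma~\ref{L:indep}) observes that $\gamma_1,\gamma_2$ are radii of the immersed disk $B(z,\delta_z)$, so their endpoints on $\partial \tilde B(z,\delta_z)$ are joined by a saddle connection $\eta$; preservation of $\ell_\eps$, $\sigma_\eps$, and $\mathrm{dir}$ then sends the triangle $(\gamma_1,\eta,\gamma_2)$ to a congruent one by ASA, forcing $F(\gamma_1)(\delta_z)=F(\gamma_2)(\delta_z)$. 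You also skip the prior check (Lemma~\ref{L:short}) that $F(\gamma_z)$ is again a \emph{shortest} path, needed so that $f$ actually lands in $N'_{\eps/2}(y)$; this uses $\ell_\eps([\gamma_z]+\theta)\ge 2\delta_z\cos\theta$ for $|\theta|<\pi/2$ together with $\ell_\eps\circ F=\ell_\eps$.

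Your description of $F^*\mu_\theta=\mu_\theta$ as preventing ``folding or tearing mass'' does not locate the actual obstruction. The paper's use is concrete: to show $d_X(z,z')=d_Y(f(z),f(z'))$ one straightens $\gamma_z\cup[z,z']\cup\gamma_{z'}$ rel endpoints to a geodesic $\eta$ and develops it in the plane as a piecewise linear path with vertices over $x$. Each straight segment $\sigma$ of this development is a union of short saddle connections \emph{together with} accumulation points that also map to $x$. The individual saddle-connection lengths are preserved by $\ell_\eps\circ F=\ell_\eps$ and their gluing by $\sigma_\eps\circ F=F\circ\sigma_\eps$, but the residual contribution of the accumulation set to the total length of $\sigma$ is precisely the $\mu_\theta$-measure (for $\theta$ perpendicular to $\sigma$) of the closed set of critical trajectories from $x$ meeting $\sigma$. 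That is the single place the measure hypothesis is invoked; without it $F$ could permute these limit trajectories and change $|\sigma|$ while preserving all the direction and saddle-connection data, and then your $h$ would fail to be an isometry.
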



\begin{remark}
When $x$ and $y$ are flat points or cone points, this theorem reduces 
to the statement that $x$ and $y$ have isometric neighborhoods if and 
only if they have the same total angle.
\end{remark}

The implication ``(2) $\implies$ (1)'' in Theorem~\ref{T:isomexist} 
is obvious by taking $\eps = \eps'$, so assume (1) holds; we will 
show (2) holds with $\eps' = \eps/2$. The proof is by construction. 
Given $z \in N'_{\eps/2}(x)$, set $\delta_z = d_X(x,z)$, and let 
$\gamma_z$ be a shortest trajectory from $x$ to $z$, meaning 
$\gamma_z(\delta_z) = z$. Note that this implies 
$\ell[\gamma_z] > 2\delta_z$. When $F([\gamma]) = [\eta]$, we will 
write $F(\gamma)(t)$ in place of $\eta(t)$ to avoid introducing new 
symbols. In the proof of the following lemma, we also use $[\gamma] + \theta$, for $\theta \in \R$, to mean the linear approach $[\eta]$ in the rotational component $\overline{[\gamma]}$ such that, with respect to the translation structure on $\overline{[\gamma]}$, $[\gamma]$ and $[\eta]$ differ by $\theta$, when such an $[\eta]$ exists.


\begin{lemma}\label{L:short}
With the above assumptions, $F(\gamma_z)$ is a shortest path from $y$ 
to $F(\gamma_z)(\delta_z)$.
\end{lemma}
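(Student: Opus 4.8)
The plan is to transport the hypothesis on $\gamma_z$ through $F$ and argue by contradiction, observing first that all four conditions on $F$ are symmetric, so that $F^{-1}$ also preserves $\mathrm{dir}$, $\ell_\eps$, $\sigma_\eps$ and every $\mu_\theta$. Write $[\eta]:=F([\gamma_z])$ and, by the convention above, $F(\gamma_z)=\eta$. Preservation of direction and of $\ell_\eps$ gives $\mathrm{dir}[\eta]=\mathrm{dir}[\gamma_z]=:\theta$ and, since $\ell[\gamma_z]>2\delta_z$ with $2\delta_z<\eps$, also $\ell[\eta]\ge\ell_\eps[\eta]=\ell_\eps[\gamma_z]>2\delta_z$; hence $\eta$ is defined and stays in $Y'$ on $(0,2\delta_z)$, the point $w:=\eta(\delta_z)$ is a regular point distinct from $y$ (it lies in $Y'$), and $d_Y(y,w)\le\delta_z$. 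Assume, for contradiction, that $\eta|_{(0,\delta_z)}$ is not a shortest path, so $\delta_0:=d_Y(y,w)<\delta_z$; pick a shortest geodesic $\eta_0$ from $y$ to $w$ and set $\theta_0:=\mathrm{dir}[\eta_0]$, so that $\ell[\eta_0]\ge\delta_0$ and $\eta_0(\delta_0)=w$. One first checks $\theta_0\ne\theta$: if $\eta$ and $\eta_0$ had the same (unit) velocity at the common regular point $w$, they would be the same maximal geodesic of $Y'$, and travelling backward from $w$ against that direction one would reach the first singularity $y$ after a well-defined distance, equal both to $\delta_z$ (along $\eta$, which remains in $Y'$ on all of $(0,\ell[\eta])$) and to $\delta_0$ (along $\eta_0$) --- impossible since $\delta_0<\delta_z$.

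Thus $\eta|_{(0,\delta_z]}$ and $\eta_0$ are two distinct geodesic segments from $y$ to $w$ of lengths $\delta_z>\delta_0$ meeting at $w$ with a genuine corner. Since a flat surface contains no contractible closed geodesic and no flat disc with geodesic boundary and two corners of positive interior angle (Gauss--Bonnet), the loop obtained by running along $\eta$ to $w$ and back along $\eta_0$ to $y$ must encircle $y$; equivalently, as one passes from $[\eta]$ to $[\eta_0]$ through the linear approaches realising the region between them, the direction sweeps a full turn. The idea is then to transport this configuration back through $F^{-1}$: because $F^{-1}$ preserves $\mathrm{dir}$, and the length of a rotational component equals the variation of $\mathrm{dir}$ along it, the ``full turn'' together with $\mathrm{dir}[\gamma_0]=\theta_0$ and $\ell_\eps[\gamma_0]=\ell_\eps[\eta_0]\ge\delta_0$ carry over to $[\gamma_z]=F^{-1}[\eta]$ and $[\gamma_0]:=F^{-1}[\eta_0]$ in $\mathcal{L}(x)$. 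If, in addition, one can transport the incidence relation $\eta_0(\delta_0)=\eta(\delta_z)$ to obtain $\gamma_0(\delta_0)=\gamma_z(\delta_z)$, then $d_X(x,\gamma_z(\delta_z))\le\delta_0<\delta_z$ contradicts the minimality of $\gamma_z$, and the lemma follows. (One cannot shortcut this by invoking part (2) of the theorem, since the translation equivalence is exactly what is being constructed.)

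The hard part is precisely transporting that incidence relation: $\mathrm{dir}$, $\ell_\eps$ and $\sigma_\eps$ do not record that two linear approaches from the singular point pass through a common regular point at prescribed parameters, so this is where $F^*\mu_\theta=\mu_\theta$ must be used. I would encode the coincidence at $w$ metrically via the compatibility $\nu_\theta(\sigma)=\mu_\theta(\tilde\sigma)$ of the transverse measures: a short arc $\sigma$ through a point of $\eta_0$ near $w$, transverse to $\mathcal{F}_\theta$, carries $\nu_\theta$-mass equal to the $\mu_\theta$-mass of the set of $\theta$-leaf germs crossing it, and for $\sigma$ near $w$ that set is a neighbourhood of $[\eta]$ inside $\mathcal{L}_\theta(Y)$ whose germs limit to $y$. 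Letting the foot of $\sigma$ run along $\eta_0$ near $w$ should produce a canonical --- hence $F^{-1}$-invariant, being built only from $\mathrm{dir}$ and the $\mu_\theta$ --- correspondence between a neighbourhood of $[\eta]$ in $\mathcal{L}_\theta(Y)$ and an arc of $\eta_0$, under which $\mu_\theta$ becomes $|\sin(\theta-\theta_0)|$ times arclength along $\eta_0$; matching this against the corresponding correspondence at $[\gamma_z]$, and using the maximal-length function to pin $w$ at parameter $\delta_z$ along $\eta$ and $\delta_0$ along $\eta_0$, should force $F^{-1}[\eta_0]=[\gamma_0]$ to meet $[\gamma_z]$ at a common point of $X'$ at time $\delta_0$. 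The main obstacle I expect is making this measure-theoretic bookkeeping rigorous: in particular, verifying that it pins down coincidence at a prescribed parameter rather than merely coincidence of leaf germs, dealing with the degenerate sub-case $\theta-\theta_0\equiv\pi/2$, and handling possible self-intersections of $\eta$ or $\eta_0$ before they reach $w$.
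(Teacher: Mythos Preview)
Your proposal is incomplete, and you acknowledge this yourself: the ``hard part'' of transporting the incidence relation $\eta_0(\delta_0)=\eta(\delta_z)$ back through $F^{-1}$ is left as a heuristic sketch with several flagged obstacles. The measure-theoretic bookkeeping you outline is plausible in spirit but is not carried out, and it is not clear that the transverse measures $\mu_\theta$ alone pin down coincidence at a specific regular point at a specific parameter, rather than just coincidence of leaf-germs up to measure zero. So as written this is not a proof.

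More to the point, the contradiction strategy is far heavier than what the lemma needs, and the paper's argument avoids all of it. The key observation you are missing is that the minimality of $\gamma_z$ has a purely \emph{rotational} characterization: since $\gamma_z$ is a shortest path from $x$ to $z$, the point $z$ is the center of an immersed Euclidean disk of radius $\delta_z$ in $X'$, and hence $[\gamma_z]$ sits inside a rotational arc of length at least $\pi$ in $\mathcal{L}(x)$ with $\ell([\gamma_z]+\theta)\ge 2\delta_z\cos\theta$ for all $|\theta|<\pi/2$ (these are the chord lengths of the disk). All of these quantities are below $\eps$, so $\ell_\eps$ records them exactly. Because $F$ is a homeomorphism preserving $\mathrm{dir}$ (hence respecting the rotational structure) and preserving $\ell_\eps$, the same inequalities hold for $F([\gamma_z])+\theta$. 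That forces an immersed disk of radius $\delta_z$ about $w=F(\gamma_z)(\delta_z)$ in $Y'$, whence $d_Y(y,w)\ge\delta_z$; combined with the trivial bound $d_Y(y,w)\le\delta_z$ you already noted, this gives equality and finishes the lemma. No use of $\sigma_\eps$ or $\mu_\theta$, no Gauss--Bonnet, no incidence-transport is needed here; those tools are reserved for the later lemmas.
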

\begin{proof}
The assumption that $\gamma_z$ is a shortest path from $x$ to $z$ 
implies that it lies in a rotational component of $\mathcal{L}(x)$ 
having length at least $\pi$ (see proof of Lemma~\ref{L:nonempty}), 
and $\ell([\gamma_z]+\theta) \ge 2\delta_z\cos\theta$ for 
$|\theta| < \pi/2$. Because $\eps > 2\delta_z$ and $F$ preserves 
$\ell_\eps$, we have $\ell(F([\gamma_z])+\theta) \ge 
2\delta_z\cos\theta$ for $|\theta| < \pi/2$, which shows that the 
maximal immersed disk centered at $F(\gamma_z)(\delta_z)$ also has 
radius $\delta_z$.
\end{proof}

For each $z \in N'_{\eps/2}(x)$, choose a shortest path $\gamma_z$ 
from $x$ to $z$, and set $w = F(\gamma_z)(\delta_z)$. 


\begin{lemma}\label{L:indep}
The point $w \in N'_{\eps/2}(y)$ is independent of the choice of 
$\gamma_z$.
\end{lemma}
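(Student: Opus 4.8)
We may assume $[\gamma_z] \neq [\gamma_z']$, for otherwise there is nothing to prove; since a geodesic of a translation surface is determined by one of its points together with its direction, $\phi := \mathrm{dir}[\gamma_z]$ and $\phi' := \mathrm{dir}[\gamma_z']$ are then distinct. Put $w = F(\gamma_z)(\delta_z)$ and $w' = F(\gamma_z')(\delta_z)$; by Lemma~\ref{L:short}, $F(\gamma_z)$ is a shortest path from $y$ to $w$ and $F(\gamma_z')$ a shortest path from $y$ to $w'$, each (as in its proof) realizing the maximal immersed disk of radius $\delta_z$ about its endpoint, with $y$ on the metric-completion boundary of that disk. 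We must show $w = w'$; suppose instead $w \neq w'$.

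The plan is to transport, via $F$, the fact that $\gamma_z$ and $\gamma_z'$ terminate at a common point. First I would record that, because $2\delta_z < \eps$ and $B(x,\eps)\cap\mathrm{Sing}(X) = \{x\}$, no geodesic issuing from $z$ reaches a singularity before time $\delta_z$; hence $D := B(z,\delta_z) \subseteq X'$ is the maximal immersed disk at $z$, its completed boundary ``circle'' $\psi \mapsto q_\psi$ (the endpoint of the length-$\delta_z$ geodesic from $z$ in direction $\psi$) is everywhere defined, and $q_\psi = x$ exactly at those $\psi$ reverse to directions of shortest paths from $x$ to $z$ --- among them $\psi = -\phi$ and $\psi = -\phi'$. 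Since $\gamma_z$ is a shortest path, $[\gamma_z]$ is an interior point of a rotational component of $\mathcal{L}(x)$ of length $\ge \pi$ (proof of Lemma~\ref{L:nonempty}), so $[\gamma_z]+\theta$ is defined for small $|\theta|$ with $\ell([\gamma_z]+\theta) \ge 2\delta_z\cos\theta > \delta_z$, the points $z_\theta := ([\gamma_z]+\theta)(\delta_z)$ lie in $D$ and converge to $z$, and $[\gamma_z]+\theta \to [\gamma_z]$ in $\mathcal{L}(x)$ (all estimates being explicit once the relevant angular sector at $x$ is developed onto a Euclidean wedge); likewise for $\gamma_z'$. Since $\bigcup_{\eps>\delta_z}\mathcal{L}^\eps(Y)$ is open and $[\eta]\mapsto\eta(\delta_z)$ is continuous on it, continuity of $F$ gives $F([\gamma_z]+\theta)(\delta_z) \to w$ and $F([\gamma_z']+\theta)(\delta_z) \to w'$ as $\theta \to 0$.

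The heart of the matter --- and the step I expect to be hardest --- is that the relation ``$\gamma_z$ and $\gamma_z'$ have the common endpoint $z$'', i.e.\ that the two boundary visits $-\phi$, $-\phi'$ of $D$ share the single center $z$, is encoded entirely in the direction and maximal-length data of $\mathcal{L}(x)$ near $[\gamma_z]$ and $[\gamma_z']$; this is delicate precisely because $[\gamma_z]$ and $[\gamma_z']$ are genuinely separated points of the Hausdorff space $\mathcal{L}(x)$ with distinct directions, so the coincidence is not a topological fact about $\mathcal{L}(x)$ alone and must be extracted from the finer preserved structure. Using that $x$ is isolated in $\mathrm{Sing}(X)$, I would reconstruct $D$ together with its completion near the two copies of $x$ from the families $[\gamma_z]+\theta$, $[\gamma_z']+\theta$ and their directions and lengths; then Lemma~\ref{L:short}, combined with $F$ preserving $\mathrm{dir}$ and $\ell_\eps$, identifies the image object over $y$ as a single maximal immersed disk of radius $\delta_z$ realized both from $w$ (along $F(\gamma_z)$) and from $w'$ (along $F(\gamma_z')$), with its two boundary visits now at $y$; since the center of an immersed disk is determined by the disk, this contradicts $w \neq w'$. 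Carrying this reconstruction through the non-injectivity of the exponential map (the disk is immersed, not embedded) and controlling the behavior of $F$ on the relevant rotational components near $[\gamma_z]$, $[\gamma_z']$ are the remaining technical points.
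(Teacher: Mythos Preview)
Your proposal is a plan rather than a proof, and it omits the decisive ingredient. You flag ``the heart of the matter'' as the hardest step and then defer it to ``remaining technical points''---but that step is exactly where the argument lives, and the tools you have listed are not enough to carry it out.

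Concretely, you try to link $\gamma_z$ and $\gamma_z'$ using only that $F$ is continuous and preserves $\mathrm{dir}$ and $\ell_\eps$. The family $\{[\gamma_z]+\theta : |\theta|<\pi/2\}$, together with its directions and truncated lengths, does reconstruct the immersed disk $\tilde B(z,\delta_z)$, but only as seen from \emph{one} boundary preimage of $x$; the family around $[\gamma_z']$ reconstructs the same disk as seen from a \emph{different} boundary preimage. These two families sit in separate regions of $\mathcal{L}(x)$---possibly in distinct rotational components---and nothing in $\mathrm{dir}$, $\ell_\eps$, or the topology of $\mathcal{L}(x)$ alone forces their images under $F$ to describe a \emph{single} disk in $Y$. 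What you obtain is two immersed $\delta_z$-disks in $Y$, each with $y$ on its boundary, and no constraint tying their centers together; the assertion that $F$ ``identifies the image object \ldots\ as a single maximal immersed disk'' is precisely the conclusion to be proved, not something your data delivers.

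The paper closes this gap with the one hypothesis you never invoke: the saddle-connection pairing $\sigma_\eps$. The chord $\eta$ of $\tilde B(z,\delta_z)$ joining the two boundary preimages of $x$ is a saddle connection from $x$ to itself of length $<2\delta_z<\eps$; its two ends lie respectively in the rotational components of $[\gamma_z]$ and of $[\gamma_z']$, and $\sigma_\eps$ swaps them. Because $\sigma_\eps\circ F = F\circ\sigma_\eps$, their images under $F$ are the two ends of a \emph{single} saddle connection in $Y$ with the same length and direction. The immersed triangle with sides $\gamma_z$, $\gamma_z'$, $\eta$ is thus sent to a triangle whose side $F(\eta)$ has the correct length and whose adjacent angles agree (from $\mathrm{dir}\circ F=\mathrm{dir}$); ASA congruence then forces the opposite vertices to coincide, i.e.\ $w=w'$. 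This is both short and complete, whereas your disk-reconstruction route cannot finish without bringing $\sigma_\eps$ back in---and once you do, it collapses to the triangle argument anyway.
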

\begin{proof}
Let $B(z,\delta_z)$ be the $\delta_z$-neighborhood of $z$. This neighborhood is 
the image of an immersed Euclidean disk $\tilde{B}(z,\delta_z)$. Let $\gamma_1$ 
and $\gamma_2$ be two choices for $\gamma_z$. Then the segments 
$\gamma_i((0,\delta_z))$ are radii of $B(z,\delta_z)$; let $\eta$ be the saddle 
connection between the corresponding points of $\bdy{\tilde{B}(z,\delta_z)}$. 
Set $w_i = F(\gamma_i)(\delta_z)$; we want to show that $w_1 = w_2$. 
For this it suffices to show that the (immersed) triangle formed by 
$\gamma_1$, $\gamma_2$, and $\eta$ is sent to a congruent triangle. 
This follows from the assumptions $\ell_\eps \circ F = \ell_\eps$, 
$\sigma_\eps \circ F = F \circ \sigma_\eps$, and 
$\mathrm{dir} \circ F = \mathrm{dir}$, together with the 
ASA congruence theorem.
\end{proof}

Lemma~\ref{L:indep} implies that we can define 
$f : N'_{\eps/2}(x) \to N'_{\eps/2}(y)$ unambiguously by 
\[
f(z) = F(\gamma_z)(\delta_z).
\]

\begin{lemma}
The map $f$ is a bijection.
\end{lemma}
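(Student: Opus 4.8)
The plan is to establish that $f$ is both injective and surjective by exhibiting an inverse map constructed symmetrically from $F^{-1}$. First I would note that the entire setup is symmetric in $x$ and $y$: since $F$ is a homeomorphism satisfying $\ell_\eps \circ F = \ell_\eps$, $\sigma_\eps \circ F = F \circ \sigma_\eps$, $\mathrm{dir} \circ F = \mathrm{dir}$, and $F^*\mu_\theta = \mu_\theta$, the inverse $F^{-1} : \mathcal{L}(y) \to \mathcal{L}(x)$ satisfies the exact same list of conditions. Therefore the construction of Lemmas~\ref{L:short} and \ref{L:indep} applies verbatim with the roles of $x$ and $y$ reversed, yielding a well-defined map $g : N'_{\eps/2}(y) \to N'_{\eps/2}(x)$ given by $g(w) = F^{-1}(\gamma_w)(\delta_w)$, where $\gamma_w$ is any shortest path from $y$ to $w$ and $\delta_w = d_Y(y,w)$.

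The key step is then to show $g \circ f = \mathrm{id}$ and $f \circ g = \mathrm{id}$. Fix $z \in N'_{\eps/2}(x)$ and a shortest path $\gamma_z$ from $x$ to $z$, so $f(z) = w := F(\gamma_z)(\delta_z)$ with $\delta_z = d_X(x,z)$. By Lemma~\ref{L:short}, $F(\gamma_z)$ is a shortest path from $y$ to $w$, and its $Y$-length to reach $w$ is exactly $\delta_z$; in particular $d_Y(y,w) = \delta_z = \delta_w$. Since $g(w)$ is independent of the chosen shortest path from $y$ to $w$ (Lemma~\ref{L:indep} applied to $F^{-1}$), I may compute $g(w)$ using the shortest path $F(\gamma_z)$ itself. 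Then $g(w) = F^{-1}(F(\gamma_z))(\delta_w) = \gamma_z(\delta_z) = z$, because $F^{-1}\circ F$ is the identity on $\mathcal{L}(x)$ and the parametrization of the underlying geodesic is unchanged. This gives $g \circ f = \mathrm{id}_{N'_{\eps/2}(x)}$; the identity $f \circ g = \mathrm{id}_{N'_{\eps/2}(y)}$ follows by the identical argument with $x$ and $y$ swapped. Hence $f$ is a bijection with inverse $g$.

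The main obstacle I anticipate is purely bookkeeping rather than conceptual: I must verify carefully that the quantity $\delta_w$ computed in the definition of $g$ genuinely equals $\delta_z$, i.e.\ that $F(\gamma_z)(\delta_z)$ is reached by $F(\gamma_z)$ at $Y$-distance exactly $\delta_z$ from $y$ and that this is the $d_Y$-distance from $y$ to $w$. This requires knowing that $F(\gamma_z)$, being a shortest path from $y$ to $w$, is parametrized by arc length and that no shorter path to $w$ exists — both of which are exactly the content of Lemma~\ref{L:short} (the maximal immersed disk at $w$ has radius $\delta_z$, so $d_Y(y,w) \ge \delta_z$, while $F(\gamma_z)$ realizes a path of length $\delta_z$, so equality holds). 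A secondary point to check is that $F$ and $F^{-1}$, being inverse homeomorphisms, interact correctly with the reparametrization implicit in the notation $F(\gamma)(t)$: since $\ell_\eps$ is preserved and $2\delta_z < \eps$, the value $\ell[\gamma_z]$ itself is preserved (not merely its minimum with $\eps$), so the unit-speed parametrizations match on the relevant interval and $F^{-1}(F(\gamma_z))(\delta_z) = \gamma_z(\delta_z)$ genuinely holds.
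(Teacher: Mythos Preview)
Your proposal is correct and follows essentially the same approach as the paper: construct the inverse map $g$ from $F^{-1}$ by symmetry (invoking Lemmas~\ref{L:short} and~\ref{L:indep} with the roles of $x$ and $y$ reversed), then conclude bijectivity. The paper's version is terser---it simply asserts that the reverse construction yields an inverse and reads off surjectivity and injectivity---whereas you explicitly verify $g\circ f = \mathrm{id}$ and $f\circ g = \mathrm{id}$, which is a welcome bit of extra care. One minor remark on your final paragraph: you do not actually need $\ell[\gamma_z]$ itself to be preserved (it may exceed $\eps$); all that matters is that $F^{-1}\circ F$ is the identity on linear approaches as \emph{classes}, so any unit-speed representatives agree on their common domain and in particular at $t=\delta_z$.
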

\begin{proof}
First observe that, by Lemma~\ref{L:short}, the construction of 
$f$ can be applied in the reverse direction using $F\inv$ to obtain 
an inverse map $f\inv$. Thus every point of $N'_{\eps/2}(y)$ is 
covered via $f$ by a point of $N'_{\eps/2}(x)$, which shows that $f$ 
is surjective. To see that it is injective, note that this is just 
Lemma~\ref{L:indep} applied in the reverse direction, {\em i.e.}, 
it is the observation that $f\inv$ is well-defined.
\end{proof}

\begin{lemma}
The map $f$ is a local isometry.
\end{lemma}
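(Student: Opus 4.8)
The plan is to show that $f$ is locally distance-preserving by a triangle-congruence argument, in the spirit of Lemmas~\ref{L:short} and~\ref{L:indep}; since $\mathrm{dir}\circ F=\mathrm{dir}$ this will in fact produce a local \emph{translation} equivalence. As $z$ and $w:=f(z)$ are regular points, it suffices to find $\rho>0$ so that $f$ carries the metric ball $B(z,\rho)\subset X$ isometrically onto $B(w,\rho)\subset Y$. Write $\delta=\delta_z=d_X(x,z)$ and fix a shortest trajectory $\gamma_z$ from $x$ to $z$, so $\ell[\gamma_z]>2\delta$. By Lemma~\ref{L:short} and its proof (cf.\ also Lemma~\ref{L:nonempty}) we have $d_Y(y,w)=\delta$, the maximal immersed Euclidean disks $\tilde B(z,\delta)$ and $\tilde B(w,\delta)$ both have radius $\delta$, and $[\gamma_z]$ (resp.\ $F[\gamma_z]$) is an interior point of its rotational component, the length-$\pi$ arc $A_x:=\{[\gamma_z]+\theta\mid|\theta|<\pi/2\}$ (resp.\ $A_y$ around $F[\gamma_z]$) being exactly the set of linear approaches obtained by developing the chords of $\tilde B(z,\delta)$ (resp.\ $\tilde B(w,\delta)$) that issue from the boundary point $x$ (resp.\ $y$).

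Next I would pin down the local structure of $\mathcal{L}(x)$ near $[\gamma_z]$. Since $\ell[\gamma_z]>2\delta$, lower semicontinuity of $\ell$ gives a neighborhood of $[\gamma_z]$ on which $\ell\ge 2\delta-\epsilon$; on such a neighborhood the evaluation $[\gamma]\mapsto\gamma(3\delta/2)$ is defined, and because the metric is flat---geodesics that stay a fixed distance from $\mathrm{Sing}(X)$ cannot refocus---it is a homeomorphism onto a relatively open arc through $\gamma_z(3\delta/2)$. Together with continuity of $\mathrm{dir}$ this forces a small enough neighborhood of $[\gamma_z]$ in $\mathcal{L}(x)$ to be a sub-arc of $A_x$, and likewise for $A_y$ around $F[\gamma_z]$. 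Since $F$ is a homeomorphism carrying $[\gamma_z]$ to $F[\gamma_z]$ and preserving $\mathrm{dir}$, which restricts to a homeomorphism of $A_x$ and of $A_y$ onto the \emph{same} arc of $S^1$, it follows that $F$ maps a neighborhood of $[\gamma_z]$ in $A_x$ onto a neighborhood of $F[\gamma_z]$ in $A_y$, with $F([\gamma_z]+\theta)=F[\gamma_z]+\theta$ for all small $\theta$. In particular $F$ preserves the angle at $x$ between $[\gamma_z]$ and any nearby chord $[\gamma_z]+\theta$, such an angle being the $S^1$-distance between the corresponding directions.

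Now fix $\rho>0$ small enough that $\rho<\tfrac13\min\{\delta,r(z)\}$ and $\delta+\rho<\eps/2$, and let $z'\in B(z,\rho)$. If there is a shortest trajectory $\gamma_{z'}$ from $x$ to $z'$ with $[\gamma_{z'}]=[\gamma_z]$, then $z$ and $z'$ lie at parameters $\delta$ and $\delta_{z'}$ of one common shortest trajectory from $x$, so $d_X(z,z')=|\delta-\delta_{z'}|$; applying $F$ and invoking Lemma~\ref{L:short} for $\gamma_{z'}$ as well shows $d_Y(w,f(z'))=|\delta-\delta_{z'}|$ too. Otherwise, choose any shortest trajectory $\gamma_{z'}$ from $x$ to $z'$. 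A compactness argument (the set of shortest trajectories from $x$ to points of $\bar B(z,2\rho)$ is relatively compact in $\mathcal{L}(x)$, by lower semicontinuity of $\ell$ and continuity of evaluation and $\mathrm{dir}$) shows that, after shrinking $\rho$, $[\gamma_{z'}]$ is close in $\mathcal{L}(x)$ to some shortest trajectory $\gamma^{*}$ from $x$ to $z$; since $z'$ is near the centre of $\tilde B(z,\delta)$, the previous paragraph then gives $[\gamma_{z'}]=[\gamma^{*}]+\theta$ for a small $\theta$, and by Lemma~\ref{L:indep} we may use $\gamma^{*}$ in place of $\gamma_z$ at no cost, as $F(\gamma^{*})(\delta)=w$. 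The vertices $x,z,z'$ span an immersed Euclidean triangle with sides $\gamma^{*}|_{(0,\delta)}$ and $(\gamma^{*}+\theta)|_{(0,\delta_{z'})}$ of lengths $\delta$ and $\delta_{z'}=d_X(x,z')$, angle $\theta$ at $x$, and third side the unique short geodesic from $z$ to $z'$ of length $d_X(z,z')$. Applying $F$ and using Lemma~\ref{L:short} for the two side lengths and the angle-preservation above, the image is an immersed Euclidean triangle with vertices $y,w,f(z')$, the same two side lengths, and the same angle $\theta$ at $y$; by the side–angle–side congruence theorem the two triangles are congruent. Hence the image of the third side is a path from $w$ to $f(z')$ of length $d_X(z,z')$, so $d_Y(f(z),f(z'))\le d_X(z,z')$; running the identical argument with $F^{-1}$ (which satisfies the same hypotheses) yields the reverse inequality, and therefore $d_Y(f(z),f(z'))=d_X(z,z')$.

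The step I expect to be the main obstacle is the local-structure input of the second paragraph: identifying a neighborhood of the shortest-trajectory germ $[\gamma_z]$ in $\mathcal{L}(x)$ with the arc of chords $A_x$, so that $\mathrm{dir}$-preservation upgrades to preservation of the angle at $x$ and, crucially, so that $F[\gamma_{z'}]$ lands in the same rotational component as $F[\gamma_z]$ (making the image triangle meaningful). This forces one to combine the lower semicontinuity of $\ell$---to rule out short saddle connections or other low-$\ell$ phenomena accumulating at $[\gamma_z]$, recalling from Remark~\ref{R:topnearobstr} that $\mathcal{L}(x)$ need not be locally Euclidean at boundary points of rotational components---with the flatness of the metric to rule out geodesics from $x$ refocusing near $z$; it also needs Lemma~\ref{L:indep} to absorb the possible multiplicity of shortest trajectories from $x$ to $z$. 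The compactness claim and the verification that the "third side" is an honest shortest geodesic (so that its length really is $d_X(z,z')$) are routine once $\rho$ is taken small.
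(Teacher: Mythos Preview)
Your SAS triangle argument is exactly what the paper uses in the favorable case, but your reduction of the general situation to that case has a genuine gap. The crux is your second paragraph: you assert that a small enough neighborhood of $[\gamma_z]$ in $\mathcal{L}(x)$ is a sub-arc of $A_x$, and then use this (via compactness) to conclude that any shortest trajectory $\gamma_{z'}$ from $x$ to a nearby point $z'$ satisfies $[\gamma_{z'}]=[\gamma^*]+\theta$ for some shortest trajectory $\gamma^*$ to $z$ and small $\theta$. Neither step is justified for a wild singularity. Remark~\ref{R:topnearobstr} already warns that the subspace topology a rotational component inherits from $\mathcal{L}(X)$ can differ from its $1$-manifold topology; concretely, linear approaches from \emph{other} rotational components can accumulate on $[\gamma_z]$, so a neighborhood of $[\gamma_z]$ in $\mathcal{L}(x)$ need not be contained in $\overline{[\gamma_z]}$ at all, and your evaluation map $[\gamma]\mapsto\gamma(3\delta/2)$ need not be a homeomorphism onto an arc. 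Consequently, even if your compactness claim held, closeness of $[\gamma_{z'}]$ to $[\gamma^*]$ in $\mathcal{L}(x)$ would not force them into the same rotational component, and the triangle with vertex $x$ simply fails to exist.

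A decisive diagnostic: your argument never uses the hypotheses $\sigma_\eps\circ F=F\circ\sigma_\eps$ and $F^*\mu_\theta=\mu_\theta$ from Theorem~\ref{T:isomexist}. The paper's proof splits into two cases. When $\gamma_z$ and $\gamma_{z'}$ lie in the same rotational component at angle $<\pi/2$, it runs your SAS argument. Otherwise, it replaces the nonexistent triangle by the shortest path $\eta$ homotopic to $\gamma_z\cup[z,z']\cup\gamma_{z'}$ rel endpoints; this $\eta$ is a piecewise-geodesic broken line whose straight segments are concatenations of short saddle connections together with (possibly) limit points projecting to $x$. Showing that $F$ preserves the length of each straight segment requires both the saddle-connection pairing $\sigma_\eps$ (to match up the saddle connections along the segment) and the transverse measure $\mu_\theta$ (to account for the possibly non-discrete set of intermediate hits on $x$). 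This second case is precisely where the wildness of $x$ enters, and it cannot be avoided by shrinking $\rho$.
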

\begin{proof}
Let $z \in N'_{\eps/2}(x)$. We want to show that some embedded disk 
centered at $z$ is carried isometrically into $N'(y)$. The idea of 
our proof is to consider ``polar coordinates'' at $z$ and to show 
that these are preserved by $f$. Let $D_z$ be the largest open, 
embedded disk centered at $z$ and contained in $N'_{\eps/2}(x)$; its 
radius is therefore $\min\{\eps/2 - \delta_z, \mathrm{inj\,rad}(z)\}$, 
where $\mathrm{inj\,rad}(z)$ is the injectivity radius of $X$ at $z$ 
(this is at most $\delta_z$).

Let $\gamma_z$ be a shortest path from $x$ to $z$. Given $z' \in D_z$, 
let $\gamma_{z'}$ be a shortest path from $x$ to $z'$ and let $[z,z']$ 
denote the (unique) shortest segment from $z$ to $z'$. If $\gamma_z$ 
and $\gamma_{z'}$ lie in the same rotational component and are 
rotations of each other by an angle $<\pi/2$, then the segments 
$\gamma_z$, $[z,z']$, and $\gamma_{z'}$ form the sides of a Euclidean 
triangle, which is sent to a congruent triangle by construction 
(and an application of the SAS congruence theorem). Otherwise, we 
construct a path that is a union of saddle connections and is 
carried isometrically to $N'_{\eps/2}(y)$.

Let $B(z,\delta_z)$ and $B(z')$ be the open $\delta_z$ and $\delta_{z'}$ 
neighborhoods of $z$ and $z'$, respectively; use the developing map 
of $X$ to lift these to overlapping disks $\ttil{B}(z)$ and 
$\ttil{B}(z')$ in the plane. By assumption, $\partial\ttil{B}(z)$ and 
$\partial\ttil{B}(z')$ each have points that map to $x$; call these 
$x_1$ and $x_2$. If there is a path in $\ttil{B}(z) \cup \ttil{B}(z')$  
between these points, then its length is less than $\eps$, and we are 
done: a quadrilateral is determined by the lengths of three of its 
sides and the angles between them, and these data are preserved by 
$f$. If no such path exists, then the segment in the plane from 
$x_1$ to $x_2$ passes outside of $\ttil{B}(z) \cup \ttil{B}(z')$. 
Let $\eta$ be the shortest path in $X$ homotopic to the union of 
$\gamma_z$, $[z,z']$, and $\gamma_{z'}$, relative to its endpoints. 
Then the lift of $\eta$ to the plane by the developing map is a 
piecewise linear curve $\tilde\eta$, with possibly infinitely many 
points of non-differentiability, occurring at other points that 
project to $x$ along $\eta$; at each such point, $\eta$ turns 
consistently to either the right or the left. Each of these angles 
is preserved by $f$. Thus we only need to confirm that the lengths 
of the straight segments of $\tilde\eta$ are preserved. Each such 
segment $\sigma$ is a union of saddle connections (with length 
$<\eps$) and points that project to $x$. The lengths, direction, and 
pairing of the saddle connections along $\sigma$ are preserved. Length 
is also preserved along all critical trajectories emanating from $x$ 
in the direction $\theta$ perpendicular to $\sigma$. This set of 
trajectories forms a closed subset of $\mathcal{L}_\theta(x)$, so 
its transverse measure is preserved by $f$. Thus the total length of 
$\sigma$ is preserved. We conclude that $\eta$ is sent by $f$ to 
an isometric path in $N_{\eps/2}(y)$.

Thus we see that $d_X(z,z') = d_Y(f(z),f(z'))$, and the angle 
between $\gamma_z$ and $[z,z']$ equals the angle between 
$F(\gamma_z)$ and $[f(z),f(z')]$. This proves the result.
\end{proof}

\begin{proof}[Proof of Theorem~\ref{T:isomexist}]
A bijection between Riemannian manifolds that is a local isometry 
is also an isometry, and so the result follows immediately from the 
preceding lemmas.
\end{proof}

\section{Final remarks}\label{S:remarks}

In this section we compare the space of directions and the Alexadrov cone of $\widehat{X}$ at a singular point $x$ to $\lx$. Both are metric spaces used to extract information from a neighborhood of a point in a metric space. For a more detailed exposition on these objects we refer the reader to \cite{Bu}.

Recall that the \emph{space of directions} at a point $x\in\widehat{X}$ 
is the metric space consisting of curves emanating from $x$ for which a 
comparison angle exists. The corresponding metric is the \emph{upper 
angle} metric $\measuredangle_U(\cdot,\cdot)$. 

Let $\widehat{X}$ be the metric completion of a finite cyclic covering 
of $\C^*$ and $x_0=\mathrm{Sing}(X)$. We index the sheets of this 
covering by $\Z/n\Z$. Let $\gamma_1$ and $\gamma_2$ be two linear 
approaches to $x_0$ whose images do not lie in the same sheet of the 
covering. That is, their distance in the rotational component forming 
$\mathcal{L}(x_0)$ is greater than $2\pi$. A straightforward 
calculation shows that  $\measuredangle_U(\gamma_1,\gamma_2)=\pi$. 
Hence with the space of directions we obtain less information about 
the set of geodesics emanating from $x_0$ than with $\mathcal{L}(x_0)$.

Let $\Gamma_x$ denote the set of geodesics emerging from a singularity 
$x\in\widehat{X}$. Define on  $\Gamma_x\times[0,\infty)$ the 
pseudo-metric:
\begin{equation}\label{pmetric}
d\big((\gamma_1,s_1),(\gamma_2,s_2)\big) := 
\limsup_{t\to 0^+}
\frac{d_X(\gamma_1(ts_1),\gamma_2(ts_2))}{t}
\end{equation}
Let $C_x$ denote the metric space obtained after taking the quotient 
by (\ref{pmetric}) and $\widehat{C_x}$ the corresponding metric 
completion. This metric space is called the \emph{Alexandrov cone} at 
$x$. Suppose that $x$ is an infinite angle singularity. It is not hard 
to find a pair of linear approaches $[\gamma_1]\neq[\gamma_2]$ to $x$ 
and a pair of positive real numbers $s_1$, $s_2$ such that the 
distance \eqref{pmetric} between the points $(\gamma_1,s_1)$ and 
$(\gamma_2,s_2)$ is arbitrarily small and at the same time the 
distance between $[\gamma_1]$ and $[\gamma_2]$ in the corresponding 
double spire is arbitrarly large. In other words, from the metric point 
of view the Alexandrov cone cannot tell apart linear approaches in 
the same rotational component that are far away from each other.

More seriously, because the space of directions and the Alexandrov 
cone are metric spaces, they lose certain convergence information 
contained in $\mathcal{L}(x)$; see remark~\ref{R:notreg}. However, 
the Alexandrov cone can be completely recovered from $\mathcal{L}(x)$ 
by separating into rotational components.

\begin{remark}
For ``good'' metric spaces, the Gromov--Hausdorff tangent cone at a 
point $x_0$ is nothing but the metric cone over the space of directions 
at $x_0$, where as for ``bad'' (\emph{e.g.}, non locally compact, 
non boundedly compact) this cone might not be well defined (see 
\cite[\S8.2]{Bu} for more details). Hence, with the Gromov--Hausdorff 
cone at a singularity we obtain (if any) less information about the 
set of geodesics emanating from $x_0$ than with $\mathcal{L}(x_0)$.
\end{remark}

Most of our constructions only rely on the fact that a translation 
surface is a Riemannian manifold; indeed, in many cases only an affine 
connection is required. It would be interesting to know if these 
constructions have applications in other areas.



\begin{bibdiv}
\begin{biblist}

\bib{B}{article}{
AUTHOR =  {Bowman, J.~P.},
TITLE = {The complete family of Arnoux-Yoccoz Surfaces}
EPRINT = {arXiv:1011.0658v1}
YEAR = {2011}
}

\bib{Bu}{book}{
    AUTHOR = {Burago, Dmitri},
AUTHOR = {Burago, Yuri},
AUTHOR = {Ivanov, Sergei},
     TITLE = {A course in metric geometry},
    SERIES = {Graduate Studies in Mathematics},
    VOLUME = {33},
 PUBLISHER = {American Mathematical Society},
   ADDRESS = {Providence, RI},
      YEAR = {2001},
     PAGES = {xiv+415},
      ISBN = {0-8218-2129-6},
   MRCLASS = {53C23},
  MRNUMBER = {1835418 (2002e:53053)},
MRREVIEWER = {Mario Bonk},
}

\bib{Ch}{article}{
    AUTHOR = {Chamanara, R.},
     TITLE = {Affine automorphism groups of surfaces of infinite type},
 BOOKTITLE = {In the tradition of {A}hlfors and {B}ers, {III}},
    SERIES = {Contemp. Math.},
    VOLUME = {355},
     PAGES = {123--145},
 PUBLISHER = {Amer. Math. Soc.},
   ADDRESS = {Providence, RI},
      YEAR = {2004},
}

\bib{CGL}{article}{
    AUTHOR = {Chamanara, R.},
    AUTHOR = {Gardiner, F.},
    AUTHOR = {Lakic, N.},
     TITLE = {A hyperelliptic realization of the horseshoe and baker maps},
   JOURNAL = {Ergodic Theory Dynam. Systems},
  FJOURNAL = {Ergodic Theory and Dynamical Systems},
    VOLUME = {26},
      YEAR = {2006},
    NUMBER = {6},
     PAGES = {1749--1768},
}

\bib{dCH}{article}{
Author={de Carvalho, A.},
Author={Hall, T.},
Eprint={arXiv:1010.3448},
Year={2011},
}

\bib{FoxKersh}{article}{
AUTHOR = {Fox, Ralph H. and Kershner, Richard B.},
     TITLE = {Concerning the transitive properties of geodesics on a
              rational polyhedron},
   JOURNAL = {Duke Math. J.},
  FJOURNAL = {Duke Mathematical Journal},
    VOLUME = {2},
      YEAR = {1936},
    NUMBER = {1},
     PAGES = {147--150},
}

\bib{Ghys}{article}{
    AUTHOR = {Ghys, E.},
     TITLE = {Topologie des feuilles g\'en\'eriques},
   JOURNAL = {Ann. of Math.},
    VOLUME = {141},
      YEAR = {1995},
    NUMBER = {2},
     PAGES = {387--422},
}


\bib{GJ}{article}{
    AUTHOR = {Gutkin, E.},
    Author = {Judge, C.},
     TITLE = {Affine mappings of translation surfaces: geometry and
              arithmetic},
   JOURNAL = {Duke Math. J.},
  FJOURNAL = {Duke Mathematical Journal},
    VOLUME = {103},
      YEAR = {2000},
    NUMBER = {2},
     PAGES = {191--213},
     }

\bib{Hoo}{article}{
    AUTHOR = {Hooper,P.},
     TITLE = {The Invariant Measures of some Infinite Interval Exchange Maps},
    EPRINT = {arXiv:1005.1902v1},
      YEAR = {2010},
}

\bib{HLT}{article}{
AUTHOR={Hubert, P.},
Author={Leli\`evre, S.},
Author={Troubetzkoy, S.},
Title={The Ehrenfest wind-tree model: periodic directions, recurrence, diffusion},
Journal={To appear in Journal fuer die reine und angewandte Mathematik (Crelle's Journal)},
}

\bib{HWS}{article}{
Author={Hubert, P.},
Author={Weize-Schmithuesen, G},
Title={Infinite translation surfaces with infinitely generated Veech groups},
Journal={To appear in Journal of Modern Dynamics},
}

\bib{PSV}{article}{
AUTHOR={Przytycki, P.},
Author={Schmith\"{u}sen, G. },
Author={Valdez, F.},
Title={Veech groups of Loch Ness Monsters},
Eprint={arXiv:0906.5268v1},
Journal = {To appear in Annales de l'Institut Fourier},
}


\bib{V}{article}{
Author = {Valdez, F.},
Title = {Veech groups, irrational billiards and stable abelian differentials}
Journal = {To appear in Discrete and Continuous Dynamical Systems}
}






\end{biblist}
\end{bibdiv}
\end{document}